\newcommand{\ignore}[1]{}
\newtheorem{theorem}{Theorem}[section]
\newtheorem{lemma}[theorem]{Lemma}
\newtheorem{conjecture}[theorem]{Conjecture}
\newtheorem{proposition}[theorem]{Proposition}
\newtheorem{claim}[theorem]{Claim}
\def\eps{{\epsilon}}
\def\conv{{{\mathtt{conv}}}}
\def\1{{(1)}}
\def\2{{(2)}}
\def\cwise{{\circlearrowright}}
\def\counterwise{{\circlearrowleft}}
\def\A{{\cal A}}
\def\F{{\cal F}}
\def\L{{\cal L}}
\def\HH{{\cal{H}}}
\def\C{{\EuScript{C}}}
\def\D{{\cal D}}
\def\B{{\cal B}}
\def\K{{{\mathcal {K}}}}
\def\Q{{\cal Q}}
\def\S{{\cal {S}}}
\def\M{{\cal M}}
\def\reals{{\mathbb R}}
\def\G{{\cal G}}
\def\T{{\cal T}}
\begin{document}

\begin{titlepage}

\title{{On Lines Crossing Pairwise Intersecting Convex Sets in Three Dimensions}\thanks{A preliminary version of this article appeared in the Proceedings of the 37th ACM-SIAM Symposium on Discrete Algorithms (SODA), January 11--14, 2026, Vancouver, Canada.}
}

\author{
Natan Rubin\thanks{Email: {\tt rubinnat.ac@gmail.com}. Ben Gurion University of the Negev, Beer-Sheba, Israel. Supported
by grants 2891/21 and 3010/25 from Israel Science Foundation.}
}

\maketitle

\begin{abstract}

The 1913 Helly's theorem states that any family $\K$ of $n\geq d+1$ convex sets in $\reals^d$ can be pierced by a single point if and only if any $d+1$ of $\K$'s elements can. In 2002 Alon, Kalai, Matou\v{s}ek and Meshulam ruled out the possibility of similar criteria for the existence of {\it lines} crossing multiple convex sets in dimension $d\geq 3$ -- for any $k\geq 3$, they described arbitrary large families $\K$ of convex sets in $\reals^3$ so that any $k$ elements of $\K$ can be crossed by a line yet no $k+4$ of them can.

Let $\K$ be a family of $n$ pairwise intersecting convex sets in $\reals^3$. We show that there exists a line crossing $\Theta(n)$ elements of $\K$. This resolves the most extensively studied variant of a problem by Mart\'inez, Rold\'an-Pensado and Rubin ({\it Discrete Comput. Geom. 2020}) which was highlighted by B\'ar\'any and Kalai ({\it Bull. Amer. Math. Soc. 2021}). Our result adds to the very few sufficient (and non-trivial) conditions that have been known for the existence of line transversals to large families of convex sets.

Our argument is based on a Ramsey-type result of independent interest for families of pairwise intersecting convex sets in $\reals^2$, and the structure of line arrangements in $\reals^3$.
\end{abstract}

\maketitle


\end{titlepage}

\section{Introduction} \label{sec:intro}

For any integer $n\geq 1$, we will use $[n]$ to denote the set $\{1,\ldots,n\}$. For any set $A$, and any integer $j$, we use $\binom{A}{j}$ to denote the collection of all the $j$-size subsets of $A$. 

Let $\K$ be a finite family of convex sets in $\reals^d$. We say that a collection $X$ of geometric objects (e.g., points, lines, or $k$-flats -- $k$-dimensional affine subspaces of $\reals^d$) is a \emph{transversal} to $\K$, and that $\K$ can be \emph{pierced} or \emph{crossed} by $X$, if each set of $\K$ is intersected by some member of $X$.

\subsection{Helly's theorem and its relatives}
A fundamental 1913 theorem in discrete and computational geometry and convex optimization, due to Helly \cite{Helly}, states that a finite family $\K$ of convex sets has a non-empty intersection (i.e., $\K$ can be pierced by a single point) if and only if each of its $(d+1)$-size subsets $\K'\in {\K\choose d+1}$ can be pierced by a point. In the past 60 years Geometric Transversal Theory has been preoccupied with the following questions (see e.g. \cite{AmentaLoSo,BaranyKalaiSurvey,Eckhoff,WengerHolmsen} for a comprehensive survey):

\begin{itemize}
 \item[1.] Given that a {\it significant fraction} of the $(d+1)$-tuples $\K'\in \binom{\K}{d+1}$ have a non-empty intersection, can $\K$, or at least some fixed fraction of its members, be pierced by constantly many points?
    \item[2.] Can an analogous ``Helly-type" statement be established for crossing families of convex sets with {\it $k$-flats}, for $1\leq k\leq d-1$?
\end{itemize}

\noindent{\bf Fractional Helly, $(p,q)$-theorem, and Colorful Helly theorem.} The most basic result of the first kind was established in 1979 by Katchalski and Liu.

\begin{theorem}[Fractional Helly's Theorem \cite{KatchalskiLiu}] \label{theorem:Fractional}
    For any $d\geq 1$ and $\alpha>0$ there is a number $\beta=\beta(\alpha,d)>0$ with the following property: For every finite family $\K$ of convex sets in $\reals^d$ so that at least $\alpha\binom{|\K|}{d+1}$ of the $(d+1)$-subsets $\K'\in \binom{\F}{d+1}$ have non-empty intersection, there is a point which pierces at least $\beta |\K|$ of the sets of $\K$.
\end{theorem}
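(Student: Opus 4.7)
The plan is to prove the theorem by a maximal common-intersection argument in the spirit of Katchalski and Liu. Let $m$ denote the largest size of a subfamily $A \subseteq \K$ with $\bigcap A \neq \emptyset$; since any point in $\bigcap A$ pierces $m$ sets, it suffices to show $m \geq \beta n$ for some $\beta = \beta(\alpha, d) > 0$. I would assume for contradiction that $m < \beta n$ and derive an upper bound on the number of intersecting $(d+1)$-tuples that contradicts the hypothesis $\alpha\binom{n}{d+1}$.

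Fix a maximal $A$ as above. For every $B \in \K \setminus A$ we have $\bigcap(A \cup \{B\}) = \emptyset$, and Helly's theorem applied to this finite family produces a $d$-subset $S_B \subseteq A$ such that $B \cap \bigcap S_B = \emptyset$. Thus each outside set $B$ comes equipped with a canonical ``forbidden'' $d$-subset of $A$ whose union with $B$ is a non-intersecting $(d+1)$-tuple. I would then double-count intersecting tuples: the $\binom{m}{d+1}$ tuples lying inside $A$ are automatically intersecting, but under $m < \beta n$ this contributes at most $\beta^{d+1}\binom{n}{d+1}$, which is strictly less than $\alpha\binom{n}{d+1}$ once $\beta$ is small. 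The remaining intersecting tuples must meet $\K\setminus A$; for a mixed tuple $T = \{B_1,\ldots,B_j\}\cup S$ with $B_i \in \K\setminus A$ and $S \subseteq A$, intersection requires $S \neq S_{B_i}$ for each $i$, and my goal would be to quantify this exclusion strongly enough to push the total count of intersecting tuples below $\alpha\binom{n}{d+1}$.

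The main obstacle I anticipate is exactly this last counting step: the forbidden $d$-subsets $S_B$ need not be distinct across different $B$'s, so a naive union bound or inclusion--exclusion is too weak. My plan is to induct on $d$: restricting $\K$ to a suitable affine cross-section (for instance, through a point of $\bigcap A$) yields a lower-dimensional family to which the inductive Fractional Helly estimate applies, and this would let me control how many mixed tuples can simultaneously avoid all the forbidden subsets. Should this combinatorial route become unwieldy, a clean fallback is Kalai's Upper Bound Theorem for $d$-Leray simplicial complexes applied to the nerve $N(\K)$, whose $d$-Leray property follows from the topological Helly theorem; it directly converts $f_d(N(\K)) \geq \alpha\binom{n}{d+1}$ into the quantitative bound $\beta = 1 - (1 - \alpha)^{1/(d+1)}$ on $f_0(N(\K))/n$.
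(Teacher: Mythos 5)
The paper does not prove Theorem \ref{theorem:Fractional}; it is quoted as a classical result of Katchalski and Liu with a citation, so there is no in-paper argument to compare against. Judging your proposal on its own merits, the primary line of attack has a genuine gap that you partly anticipate but do not close. Fix the maximal intersecting subfamily $A$ with $|A|=m<\beta n$. Your forbidden-set device $S_B$ only constrains $(d+1)$-tuples of the form $\{B\}\cup S$ with exactly one element outside $A$ and a $d$-subset $S\subseteq A$, and even there it excludes just one choice of $S$ per $B$, i.e.\ $n-m$ tuples out of $(n-m)\binom{m}{d}$ --- a negligible saving. More importantly, every mixed count $\binom{n-m}{j}\binom{m}{d+1-j}$ with $j\le d$ is already at most $O(\beta)\binom{n}{d+1}$ once $m<\beta n$, so those tuples are harmless without any exclusion argument; the entire weight of the hypothesis can sit on the $\binom{n-m}{d+1}\approx\binom{n}{d+1}$ tuples contained wholly in $\K\setminus A$, about which the maximality of $A$ and the sets $S_B$ say nothing. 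Your proposed rescue --- inducting on $d$ via a cross-section through a point of $\bigcap A$ --- does not reach these tuples either, since such a cross-section only sees sets meeting that point. One could try to recurse on $\K\setminus A$ instead, but that requires a careful amortization you have not supplied, and as written the contradiction does not materialize.

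Two further remarks. Your fallback via Kalai's upper bound theorem for $d$-Leray complexes (using that the nerve of a family of convex sets in $\reals^d$ is $d$-collapsible, hence $d$-Leray) is correct and even yields the optimal constant $\beta=1-(1-\alpha)^{1/(d+1)}$, but it is an appeal to a substantially deeper external theorem rather than a proof. The standard elementary argument, which would serve the paper's purposes, is different from both of your routes: for each intersecting $(d+1)$-tuple $I$ take the lexicographically minimal point $v(I)$ of $\bigcap_{i\in I}K_i$; a Helly/LP-type argument shows $v(I)=v(J)$ for some $d$-element $J\subset I$, and the pigeonhole principle over the at most $\binom{n}{d}$ choices of $J$ produces a single point lying in at least $\alpha(n-d)/(d+1)$ of the sets, giving $\beta\approx\alpha/(d+1)$. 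I would recommend replacing your maximal-subfamily plan with this argument.
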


Theorem \ref{theorem:Fractional} is one of the key ingredients in the 1992 proof of the so called Hadwiger-Debrunner $(p,q)$-Conjecture \cite{HadwigerDebrunner} by Alon and Kleitman \cite{AlonKleitman}. 

\begin{theorem}[The $(p,q)$-theorem \cite{AlonKleitman}] \label{theorem:PQ}
    For any $d\geq 1$ and $p\geq q\geq d+1$ there is a number $t=t(p,q,d)$ with the following property: Let $\K$ be a finite family of convex sets in $\reals^d$, so that any $p$-family $\K'\in {\K\choose p}$ encompasses a subset $\K''$ of $q$ elements with a non-empty intersection $\bigcap \K''$. Then $\K$ admits a transversal by $t$ points.
\end{theorem}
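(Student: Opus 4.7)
The plan is to derive Theorem~\ref{theorem:PQ} from Theorem~\ref{theorem:Fractional} by combining it with linear programming duality and the weak $\epsilon$-net theorem of Alon, B\'ar\'any, F\"uredi and Kleitman for convex sets in $\reals^d$. The argument proceeds in three stages, corresponding to three ingredients of increasing depth.

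First I would show that the $(p,q)$-hypothesis forces a constant fraction of the $(d+1)$-subfamilies of $\K$ to be pierceable. Indeed, for every $P\in\binom{\K}{p}$ the hypothesis produces a $q$-subset $Q\subseteq P$ with $\bigcap Q\neq\emptyset$, and then every $(d+1)$-subset of $Q$ is pierceable. Thus $P$ contributes at least $\binom{q}{d+1}$ pierceable $(d+1)$-subsets, and double counting (each pierceable $(d+1)$-subset lies in $\binom{|\K|-d-1}{p-d-1}$ of the $p$-subsets) yields that a fraction $\alpha=\alpha(p,q,d)>0$ of the $(d+1)$-subsets of $\K$ have non-empty intersection. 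Applying Theorem~\ref{theorem:Fractional} then produces a point piercing $\beta|\K|$ members of $\K$.

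Next I would bound the fractional transversal number $\tau^*(\K)$ by a constant depending only on $p,q,d$. By LP duality, $\tau^*(\K)$ equals the fractional packing number $\nu^*(\K)=\max\bigl\{\sum_{K\in\K}y_K : y\geq 0,\ \sum_{K\ni x}y_K\leq 1 \text{ for all } x\in\reals^d\bigr\}$. For any feasible $y$ the normalized weights define a probability distribution $\pi$ on $\K$; a random subfamily $\K'$ drawn i.i.d.\ from $\pi$ inherits the $(p,q)$-property, hence the fractional Helly conclusion of the first stage, so in expectation a single point pierces an $\Omega(1)$ fraction of the $\pi$-weight of $\K'$. The packing constraint on $y$, however, caps the $\pi$-weight at any point by $1/\nu^*(\K)$, which forces $\nu^*(\K)\leq T$ for some $T=T(p,q,d)$. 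An optimal primal solution then rescales to a probability measure $\mu$ on $\reals^d$ with $\mu(K)\geq 1/T$ for every $K\in\K$. The weak $\epsilon$-net theorem, applied to $\mu$ with $\epsilon=1/T$, finally furnishes a set $N\subset\reals^d$ of size $f(1/T,d)=O_{p,q,d}(1)$ hitting every convex set of $\mu$-measure at least $\epsilon$; since $\mu(K)\geq 1/T$ for all $K\in\K$, this $N$ pierces $\K$, so $t(p,q,d)\leq |N|$.

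The principal obstacle is the second stage, where the qualitative fractional Helly consequence must be converted into a quantitative constant bound on the LP optimum, independent of $|\K|$. Making the informal ``sample and contradict'' sketch rigorous requires care, since fractional Helly is an asymptotic statement and one must ensure that both the constants and the failure probabilities behave uniformly in $|\K|$. The third stage relies on the (nontrivial) weak $\epsilon$-net theorem for convex sets, but once the bounded fractional transversal is available its invocation is automatic.
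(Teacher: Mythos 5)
The paper does not prove this statement: Theorem~\ref{theorem:PQ} is quoted as a known result of Alon and Kleitman \cite{AlonKleitman}, so there is no internal proof to compare against. Your proposal is, in outline, exactly the original Alon--Kleitman argument (the three stages -- fractional Helly from the $(p,q)$-hypothesis by double counting, a bounded fractional transversal number via LP duality, and the weak $\eps$-net theorem of Alon, B\'ar\'any, F\"uredi and Kleitman), and stages one and three are correct as written. The one step that is wrong \emph{as stated} is the claim in stage two that an i.i.d.\ sample from the distribution $\pi$ ``inherits the $(p,q)$-property'': a sample of size $p$ may contain repeated sets, and the hypothesis only applies to $p$ \emph{distinct} members of $\K$. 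The standard repair is to pass to the rational optimal fractional matching, blow it up to a finite multiset, and observe that the multiset satisfies a $(p(q-1),q)$-type condition -- if some nonempty set occurs at least $q$ times among the chosen members, those copies trivially share a point, and otherwise at least $p$ distinct sets occur and the original hypothesis applies; fractional Helly (which is valid for multisets) then bounds $\nu^*=\tau^*$ against the packing constraint exactly as you describe. You flag this stage yourself as the delicate one, so I would count the proposal as a correct reconstruction of the classical proof with one fixable imprecision rather than a genuine gap.
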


      We say that a ``$k$-colored" family $\K=\K_1\uplus\ldots\uplus \K_k$ of convex sets has the \emph{Colorful Helly property} if every ``colorful" choice $K_i\in \K_i$, for $1\leq i\leq k$, admits non-empty intersection $\bigcap_{i=1}^{k}K_i\neq \emptyset$.

\begin{theorem}[Lov\'asz's Colorful Helly's Theorem 1982 \cite{ImreColored}] \label{theorem:CHT}
    Let $\K=\K_1\uplus\ldots\uplus \K_{d+1}$ be a $(d+1)$-colored family of convex sets in $\reals^d$. Then Colorful Helly property implies that there is a color class $\K_i$ with non-empty intersection $\bigcap \K_i\neq \emptyset$.
\end{theorem}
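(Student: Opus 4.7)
My plan is to prove Theorem \ref{theorem:CHT} by contradiction, via the extremal-point (``swapping'') argument pioneered by Lov\'asz and B\'ar\'any. Assume toward contradiction that $\bigcap \K_i = \emptyset$ for every $i \in [d+1]$. By intersecting all sets with a sufficiently large ball (or by a standard limiting argument), I may assume every set of $\K$ is compact.

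Among all colorful $(d+1)$-tuples $T = (K_1, \ldots, K_{d+1}) \in \K_1 \times \cdots \times \K_{d+1}$ and all points $p \in \bigcap T$ (nonempty by the Colorful Helly hypothesis), I would pick a pair $(T^*, p^*)$ that minimizes the last coordinate $x_d(p)$; such a pair exists by compactness and finiteness of $\K$. Geometrically, the hyperplane $H = \{x_d = x_d(p^*)\}$ supports the convex set $\bigcap T^*$ from below at the point $p^*$. By a Carath\'eodory-type analysis of the normal cone of $\bigcap T^*$ at $p^*$, the outward direction $-e_d$ lies in the convex cone generated by the outward normals at $p^*$ of those $K_i^*$ whose boundary passes through $p^*$; in particular, there is at least one color $i_0 \in [d+1]$ whose set $K_{i_0}^*$ has a supporting hyperplane at $p^*$ strictly tilted downward.

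Since $\bigcap \K_{i_0} = \emptyset$ by assumption, there exists $K' \in \K_{i_0}$ with $p^* \notin K'$. The swapped colorful tuple $T' = (K_1^*, \ldots, K_{i_0 - 1}^*, K', K_{i_0 + 1}^*, \ldots, K_{d+1}^*)$ again satisfies $\bigcap T' \neq \emptyset$ by hypothesis. I would then aim to show that $\bigcap T'$ must contain a point with $x_d$-coordinate strictly less than $x_d(p^*)$, contradicting the extremality of $(T^*, p^*)$ and completing the proof.

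The main obstacle is precisely this last step: proving that the swap forces the new minimum of $x_d$ strictly below $H$. The intuition is that removing $K_{i_0}^*$ --- the set responsible for the downward support at $p^*$ --- allows $\bigcap_{j \neq i_0} K_j^*$ to extend strictly below $H$ in a neighborhood of $p^*$, and since $K'$ no longer pins the intersection to $p^*$, the Colorful Helly hypothesis should place some point of $\bigcap T'$ in this extension. Making this fully rigorous is delicate in degenerate configurations (several sets sharing a supporting hyperplane at $p^*$, or $\bigcap T^*$ collapsing to $\{p^*\}$); overcoming this will likely require either a lexicographic refinement of the extremal choice or a more careful analysis of the supporting normal cone at $p^*$ to pick the correct color $i_0$.
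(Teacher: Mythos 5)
The paper does not prove Theorem \ref{theorem:CHT} at all --- it is quoted as a classical result of Lov\'asz \cite{ImreColored} and used as background --- so your proposal can only be judged on its own merits, and as written it has a genuine gap, precisely at the step you yourself flag. The difficulty is not a technical degeneracy to be handled by a lexicographic refinement; the extremal scheme is set up in the wrong direction. If $(T^*,p^*)$ is chosen to \emph{globally minimize} $x_d$ over all points of all colorful intersections, then the contradiction you seek --- a colorful tuple whose intersection dips strictly below $H$ --- simply need not exist. Concretely: Carath\'eodory for cones expresses $-e_d$ as a nonnegative combination of normals of at most $d$ of the sets of $T^*$ at $p^*$, so there is a subfamily $S$ of at most $d$ colors with $-e_d$ already in the normal cone of $\bigcap_{j\in S}K_j^*$ at $p^*$; every colorful tuple containing $\{K_j^*\}_{j\in S}$ therefore has its intersection trapped in the closed halfspace above $H$, no matter which set of the omitted color you insert. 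Your choice of $i_0$ as a color that \emph{does} contribute a downward normal does not help either: several sets may share that normal direction, so deleting one of them need not open up any region below $H$; and even when it does, the Colorful Helly hypothesis only guarantees that $\bigcap T'$ is nonempty \emph{somewhere}, with no control forcing the witness point into the small neighborhood of $p^*$ where the extension occurs.

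The correct version of the swapping argument inverts both choices. Take $T^*$ to \emph{maximize} (over colorful tuples) the lexicographically \emph{minimal} point $p^*$ of $\bigcap T^*$, and take $i_0$ to be a color that can be \emph{omitted} from the Carath\'eodory representation of $-e_d$ in the normal cone at $p^*$ (such a color exists because only $d$ generators are needed and there are $d+1$ colors). Then for any $K'\in\K_{i_0}$, the swapped tuple $T'$ satisfies $\emptyset\neq\bigcap T'\subseteq\bigcap_{j\in S}K_j^*$, whose lex-minimal point is $p^*$; so the lex-minimal point of $\bigcap T'$ is $\geq_{\mathrm{lex}} p^*$, while the maximality of $T^*$ gives $\leq_{\mathrm{lex}} p^*$. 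Hence $p^*\in\bigcap T'\subseteq K'$, and since $K'$ was arbitrary, $p^*\in\bigcap\K_{i_0}$ --- proving the theorem directly, with no contradiction hypothesis needed. Your compactness reduction also needs a small repair (intersecting with a large ball preserves the Colorful Helly property only after noting that finitely many witness points suffice; replacing each set by the convex hull of its witnesses is the cleaner route, exactly as the paper does at the start of Section \ref{Sec:Main}), but that is minor compared to reorienting the extremal argument.
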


\noindent{\bf Hyperplane transversals to families of convex sets.} The second question has been settled to the negative already for $k=1$. For instance, Santal\'o \cite{Santalo} and Danzer \cite{Danzer} observed that for any $n\geq 3$ there are families $\K$ of $n$ convex sets in $\reals^2$ so that any $n-1$ of the sets can be crossed by a single line transversal while no such transversal exists for $\K$. Nevertheless,
Alon and Kalai \cite{AlonKalai} were able to establish the following ``$(p,q)$-type" result.

\begin{theorem}[The $(p,q)$-theorem for hyperplanes \cite{AlonKalai}] \label{theorem:PQHyperplanes}
    For any $d\geq 1$ and $p\geq q\geq d+1$ there is a number $t=t(p,q,d)$ with the following property: Let $\K$ be a finite family of convex sets in $\reals^d$, so that any subset $\K'\in {\K\choose p}$ encompasses $q$ elements that can be crossed by a single hyperplane. Then the entire family $\K$ admits a transversal by $t$ hyperplanes.
    \end{theorem}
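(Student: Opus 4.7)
The plan is to mirror the Alon--Kleitman strategy that proves the point $(p,q)$-theorem (Theorem \ref{theorem:PQ}), replacing point transversals with hyperplane transversals throughout. The pipeline has three ingredients: (i) a fractional Helly-type theorem for hyperplane transversals; (ii) a weak $\epsilon$-net-type statement for hyperplane transversals of convex sets; (iii) an LP-duality argument that converts (i) and (ii) into a bounded piercing number by hyperplanes.

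\textbf{Step 1 (fractional Helly for hyperplanes).} I would first prove that for every $\alpha>0$ there is $\beta=\beta(\alpha,d)>0$ such that if at least $\alpha\binom{n}{d+1}$ of the $(d+1)$-subfamilies of an $n$-family $\K$ of convex sets in $\reals^d$ admit a hyperplane transversal, then some $\beta n$ members of $\K$ share a common hyperplane transversal. The natural tool here is the Goodman--Pollack--Wenger characterization: a family $K_1,\ldots,K_m$ has a hyperplane transversal iff its members admit a consistent ``order type''. One first pigeonholes on the $O(1)$ possible local orientation patterns carried by the transversal-able $(d+1)$-tuples to pass, via Ramsey, to a constant-fraction subfamily whose $(d+1)$-tuples are all consistently oriented. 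A topological Helly theorem (in the spirit of Kalai--Meshulam) applied to the nerve of ``admissible'' hyperplanes in the appropriate Grassmannian-type parameter space then extracts a common transversal to a positive fraction of that subfamily.

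\textbf{Step 2 ($\epsilon$-nets and LP duality).} With Step 1 in hand, the classical Alon--Kleitman machinery transfers almost verbatim. The $(p,q)$-hypothesis forces at least a positive fraction of $(d+1)$-tuples of $\K$ to admit a hyperplane transversal (a Tur\'an-type count applied to the $p$-tuple / $q$-tuple condition), so Step 1 gives, for any probability weighting on $\K$, a hyperplane crossing a positive weighted fraction. Dualizing the fractional piercing LP then yields a bounded \emph{fractional} hyperplane transversal number. This is converted into an integer transversal of size $t=t(p,q,d)$ using a weak $\epsilon$-net theorem for the ``hyperplane-stabs-convex-set'' range space, itself deducible from Step 1 together with the finite VC-dimension of the incidence relation between hyperplanes and convex sets in $\reals^d$.

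\textbf{Main obstacle.} The decisive difficulty is Step 1. Unlike in the point setting, the locus of hyperplanes meeting a convex set is \emph{not} convex in any natural parametrization of the space of hyperplanes, so Kalai's ``upper bound theorem on the nerve'' proof of Theorem \ref{theorem:Fractional} does not port over directly. The argument must instead exploit the Goodman--Pollack--Wenger topological/combinatorial characterization of hyperplane transversals, together with a topological Helly or nerve theorem for the resulting cell complex of compatible orientations. Once this nonlinear form of fractional Helly is secured, Step 2 is essentially bookkeeping that follows \cite{AlonKleitman} with the obvious substitutions.
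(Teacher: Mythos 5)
This theorem is quoted from Alon and Kalai \cite{AlonKalai}; the paper does not prove it, but it does reproduce the key ingredient of their argument in Lemma \ref{Lemma:DefinedTangent} and Appendix \ref{App:FracDefined}, so that is the natural benchmark. Your overall architecture in Step 2 --- fractional Helly for hyperplanes, then LP duality and a weak-net rounding in the style of \cite{AlonKleitman} --- is indeed the pipeline Alon and Kalai follow (and which \cite{AKMM} later axiomatized: a fractional Helly property for an abstract set system already yields the corresponding $(p,q)$-theorem). One caveat there: the rounding step cannot be justified by ``finite VC-dimension of the incidence relation between hyperplanes and convex sets''. In the dual, the set of hyperplanes meeting a convex body is the complement of two convex regions, and such ranges shatter arbitrarily large sets, so standard $\eps$-net theory does not apply. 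One genuinely needs a \emph{weak} $\eps$-net theorem for hyperplane transversals, itself extracted from the fractional Helly statement; this is precisely the step that fails for lines in $\reals^3$ \cite{CheongGoaocHolm} and is why Theorem \ref{theorem:PQHyperplanes} has no analogue for intermediate-dimensional flats.

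The substantive gap is Step 1, which you rightly single out as the decisive difficulty but do not actually prove. The route you sketch (Goodman--Pollack--Wenger order types plus a topological Helly or nerve theorem over a Grassmannian-type cell complex) is not the known proof, and there is no indication it can be made to work: the Goodman--Pollack--Wenger characterization governs the \emph{existence} of a transversal to an entire family rather than any fractional statement, and no nerve theorem is available for the non-convex loci of transversal hyperplanes. The actual argument is elementary and avoids topology altogether: by the Cappell et al.\ result (Lemma \ref{Lemma:Tangents}), every ``good'' $d$-tuple determines at most $2^{d+1}$ common tangent hyperplanes, and every good $(d+1)$-tuple admitting a transversal contains a $d$-subset one of whose tangents crosses the remaining member. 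Since only $O(n^d)$ hyperplanes are determined in total while the $(p,q)$-hypothesis forces $\Omega(n^{d+1})$ transversal $(d+1)$-tuples, the pigeonhole principle produces a single determined hyperplane crossing $\Omega(n)$ sets; non-good tuples are handled by projecting along a coordinate direction and inducting on $d$, as in Appendix \ref{App:FracDefined}. Without this (or some workable substitute), your proposal does not constitute a proof of the fractional Helly theorem for hyperplanes, and hence not of the $(p,q)$-theorem built on it.
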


At the center of Alon-Kalai argument lies the following hyperplane analogue of Theorem \ref{theorem:Fractional}.

\begin{theorem}[A fractional Helly's theorem for hyperplanes \cite{AlonKalai}] \label{theorem:FractionalHyperplanes}
   For any $d\geq 1$ and $\alpha>0$ there is a number $\beta=\beta(\alpha,d)>0$ with the following property: For every finite family $\K$ of convex sets in $\reals^d$ so that at least $\alpha\binom{|\K|}{d+1}$ of the $(d+1)$-subsets $\K'\in \binom{\K}{d+1}$ can be crossed by a hyperplane, there is a hyperplane crossing at least $\beta |\K|$ of the sets of $\K$.
    \end{theorem}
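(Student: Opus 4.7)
The plan is to reduce Theorem~\ref{theorem:FractionalHyperplanes} to the point-transversal fractional Helly (Theorem~\ref{theorem:Fractional}) by parametrizing the space of hyperplanes in $\reals^d$ and lifting each convex body to a pair of convex sets in the parameter space. I would first cover the $d$-dimensional space of hyperplanes by the $d$ affine charts $\U_1,\ldots,\U_d$, where $\U_j$ consists of hyperplanes writable in graph form $x_j = \sum_{k\neq j} a_k x_k + b$ and is identified with $\reals^d$ through the parameters $(a,b)$. Since every hyperplane lies in at least one chart, pigeonhole over the $d$ charts singles out a fixed chart, which we take to be $\U_d$, in which at least an $\alpha/d$-fraction of the good $(d{+}1)$-tuples admit a common transversal.

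Within $\U_d$, for each $K\in\K$ I would define the open convex subsets of $\reals^d$: the strict epigraph $U_K^+ := \{(a,b): b > f_K(a)\}$ of the convex function $f_K(a):=\max_{x\in K}(x_d - a\cdot x')$, and the strict hypograph $U_K^- := \{(a,b): b < g_K(a)\}$ of the concave function $g_K(a):=\min_{x\in K}(x_d - a\cdot x')$. These sets are convex and disjoint (since $g_K\le f_K$), and a hyperplane $(a,b) \in \U_d$ crosses $K$ iff $(a,b) \notin U_K^+ \cup U_K^-$. Consequently, a $(d{+}1)$-tuple $\K' \subseteq \K$ admits a common transversal in $\U_d$ iff the $2(d{+}1)$-subfamily $\{U_K^+, U_K^-\}_{K \in \K'}$ of the $2n$-family $\F := \{U_K^+, U_K^-\}_{K \in \K}$ fails to cover $\reals^d$.

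The main obstacle is to bridge the gap between this ``many $2(d{+}1)$-subfamilies of $\F$ fail to cover $\reals^d$'' condition and the ``some point lies in few sets of $\F$'' statement required by Theorem~\ref{theorem:Fractional}, which addresses \emph{intersections} rather than \emph{non-covers}, and whose natural dualization would require the complements of the $U_K^\pm$ to be convex (they are not). My plan to bridge the gap is the following Ramsey-type extraction: for each good tuple $\K'$, fix a witness $(a^*,b^*) \in \reals^d$ lying in the hole $\reals^d\setminus\bigcup_{K\in\K'}(U_K^+\cup U_K^-)$ and record, for each $K\in\K$, its side pattern $\sigma_K(a^*,b^*)\in\{+,-,0\}$. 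A pigeonhole over the constantly many side patterns realized on random small subsets of $\K$, combined with the bichromatic structure of $\F$ and the Colorful Helly Theorem~\ref{theorem:CHT} applied on an appropriate $(d{+}1)$-coloring of $\K$, should isolate a sub-family of positive density whose witnesses share a uniform ``above/below'' type. On such a sub-family the non-covering condition reduces to a standard intersection condition for a sub-family of convex sets in $\reals^d$, to which Theorem~\ref{theorem:Fractional} directly applies; the resulting piercing point translates back through the chart $\U_d$ into a single hyperplane crossing $\beta n$ bodies of $\K$ for some $\beta = \beta(\alpha,d)>0$. The delicate step, and the chief technical hurdle I anticipate, is verifying that the Ramsey-and-Colorful-Helly reduction preserves enough density to feed Theorem~\ref{theorem:Fractional}.
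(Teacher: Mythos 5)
Your dualization is set up correctly: in the chart $\U_d$ the set of hyperplanes meeting a compact convex $K$ is exactly the ``lens'' $C_K=\{(a,b): g_K(a)\le b\le f_K(a)\}$, its complement is the union of the two disjoint convex sets $U_K^{\pm}$, and a $(d{+}1)$-tuple has a transversal in that chart iff the corresponding lenses have a common point. The genuine gap is the bridging step, and I do not see how it can be closed with the tools you name. What you ultimately need is a point lying in $\beta n$ of the lenses $C_K$, i.e.\ a fractional Helly statement for the family $\{C_K\}$; but each $C_K$ is the intersection of the hypograph of a convex function with the epigraph of a concave function, so neither $C_K$ nor either of its two defining ``halves'' is convex, and in general $C_K$ admits no decomposition into boundedly many convex pieces. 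Your proposed fix --- recording side patterns of the witnesses and extracting a dense subfamily of uniform ``above/below'' type --- does not manufacture convexity: a witness for a tuple $\K'$ has type $0$ (inside the lens) with respect to every member of $\K'$ by construction, so uniformizing types over a subfamily either records only how witnesses sit relative to sets \emph{outside} their own tuple (which does not constrain $\bigcap_{K\in\K'}C_K$) or amounts to demanding a single point transversal to the whole subfamily, which is far stronger than the hypothesis provides. It is also unclear what $(d{+}1)$-coloring you would feed to Theorem~\ref{theorem:CHT}, since the Colorful Helly property requires \emph{every} colorful tuple to intersect while you only control an $\alpha$-fraction of tuples. The non-convexity of the transversal region in any parametrization of hyperplane space is precisely the obstruction that makes this theorem nontrivial, and it is why Alon and Kalai do not argue in the dual.

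For comparison, the paper's proof (Appendix~\ref{App:FracDefined}, which establishes the stronger Lemma~\ref{Lemma:DefinedTangent}) stays in primal space. It splits the transversal-admitting $(d{+}1)$-tuples into \emph{good} ones (no $d$-subset crossed by a $(d{-}2)$-flat) and \emph{bad} ones. For good tuples it invokes the Cappell et al.\ count of common tangent hyperplanes (Lemma~\ref{Lemma:Tangents}) to charge each tuple to one of $O(3^d)$ hyperplanes determined by one of its $d$-subsets, and then pigeonholes to find a single tangent hyperplane crossing $\Omega(n)$ sets; for bad tuples it projects along the $x_d$-axis and inducts on the dimension. If you wish to salvage a dual-space argument, the known substitute is a fractional Helly theorem for set systems whose nerve has bounded Leray number, in the spirit of \cite{AKMM}; that is a substantially heavier input than Theorem~\ref{theorem:Fractional} and is not obtainable from pigeonhole and Colorful Helly alone.
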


\subsection{Lines crossing pairwise intersecting convex sets in $\reals^3$}

In 2002, Alon, Kalai, Matou\v{s}ek and Meshulam \cite{AKMM} showed that no analogue of Theorems \ref{theorem:PQHyperplanes} and \ref{theorem:FractionalHyperplanes} can exist for $k$-flats of {\it intermediate dimensionality} $k\in \{1,\ldots,d-2\}$ for $d\geq 3$: for every integers $d\geq 3,m\geq 3$ and $n_0\geq m+4$ there is a family of at least $n_0$ convex sets so that any $m$ of the sets can be crossed by a line but no $m+4$ of them can;
this phenomenon can be largely attributed to the complex topological structure of the space of transversal $k$-flats.

As ``standard" Helly-type theorems are out of question for line transversals in dimension $d\geq 3$, the following conjecture was proposed in 2018 by Mart\'inez, Rold\'an-Pensado and Rubin \cite{FurtherConsequences} (also see \cite[Conjecture 8.2]{BaranyKalaiSurvey})
while seeking stronger variants of Theorem \ref{theorem:CHT}.

\begin{conjecture}\label{Conj:2colored} 
There is an absolute constant $c>0$ with the following property.
	For any finite 2-colored family $\K=\K_1\uplus \K_2$ of convex sets in $\reals^3$, with the property that any two elements $K_1\in \K_1$ and $K_2\in \K_2$ have a non-empty intersection $K_1\cap K_2\neq \emptyset$, there is a color class $i\in \{1,2\}$ so that at least $c|\K_i|$ of its elements can be crossed by a single line.
\end{conjecture}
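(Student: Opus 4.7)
The plan is to prove the conjecture by reducing to a planar problem via a generic projection, applying the Ramsey-type dichotomy advertised in the abstract for pairwise intersecting planar families, and then either finishing immediately or iterating one dimension down inside a cleverly chosen plane. Write $n_i=|\K_i|$ and assume $n_1\geq n_2$; the target is a line crossing $\Omega(n_1)$ sets of $\K_1$. Fix a generic direction $\vec v\in S^2$, let $\Pi:=\vec v^{\perp}$ and let $\pi_{\vec v}\colon\reals^3\to\Pi$ denote orthogonal projection. Setting $\tilde K:=\pi_{\vec v}(K)$, the projected family $(\tilde \K_1,\tilde \K_2)$ remains bipartite pairwise intersecting in $\Pi$, and a line of direction $\vec v$ meets $K\in\K$ precisely when its footprint in $\Pi$ lies in $\tilde K$. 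The task therefore reduces to finding a point $p\in\Pi$ contained in $\Omega(n_1)$ members of $\tilde \K_1$.

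I would next feed $(\tilde \K_1,\tilde \K_2)$ into the planar Ramsey theorem announced in the abstract. I expect it to produce a linearly sized sub-family exhibiting one of two structured patterns: \emph{(A)} a sub-family of $\tilde \K_1$ sharing a common piercing point $p^*\in\Pi$, which immediately lifts to a line of direction $\vec v$ meeting $\Omega(n_1)$ sets of $\K_1$; or \emph{(B)} a sub-family $\tilde \K_1'\subseteq \tilde \K_1$ together with a line $\ell\subset\Pi$ whose traces $\tilde K\cap\ell$ are all non-empty and follow a chain-like order along $\ell$. The preimage plane $H:=\pi_{\vec v}^{-1}(\ell)\subset\reals^3$ then meets every $K\in\K_1'$, and Case~(\emph{B}) reduces to finding a line inside $H$ meeting $\Omega(|\K_1'|)$ of the cross-sections $K\cap H$.

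To finish Case~(\emph{B}), those $K_2\in\K_2$ whose projections $\tilde K_2$ meet $\ell$ contribute cross-sections $K_2\cap H$ that form a new bipartite partner-family for $\{K\cap H\}_{K\in\K_1'}$ inside $H$, so the original $\reals^3$ problem descends to its $\reals^2$ analogue inside $H$. This planar analogue is tractable via classical Helly-type transversal results combined with fractional Helly (Theorem~\ref{theorem:Fractional}) applied to the 1D traces on $\ell$. The choice of $\vec v$ and of the auxiliary line $\ell$ is orchestrated by the arrangement in $\reals^3$ of $O(n_1n_2)$ witness objects --- for instance, the lines spanned by pairs of witness points $p_{K_1,K_2}\in K_1\cap K_2$ --- whose standard complexity bounds guarantee a generic choice avoiding degeneracies so that at least one of Cases~(\emph{A})--(\emph{B}) delivers a linear-sized transversal at each stage.

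The principal obstacle is the planar Ramsey step. Establishing a dichotomy strong enough that Case~(\emph{B}) produces a genuine chain-like structure on a linearly sized sub-family is non-trivial: bipartite (as opposed to full) pairwise intersection allows a planar family to avoid both a piercing point and a line transversal without extra structure, so the partner family $\tilde\K_2$ must be exploited in an essential way. A secondary difficulty is that, within Case~(\emph{B}), enough sets of $\K_2$ must actually meet the plane $H$ so that the iterated bipartite intersection property is preserved with quantitatively large cross-sections; ensuring this appears to demand an averaging argument over perpendicular offsets of $H$ and over directions on $S^2$, controlled by complexity bounds for line arrangements in $\reals^3$. Tying together these two layers while preserving a linear fraction throughout is, I expect, the technical heart of the proof.
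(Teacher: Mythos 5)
First, note that the statement you were asked to prove is not actually established in this paper: Conjecture \ref{Conj:2colored} is explicitly listed as a major open problem in Section \ref{Sec:Conclude}, and the paper only proves the single-family variant (Conjecture \ref{Conj:Pairwise}, i.e.\ Theorem \ref{Theorem:Main}). So there is no proof of this statement to compare yours against; the relevant question is whether your outline closes the gap the author identifies, and it does not. Your argument hinges on an unproved ``planar Ramsey dichotomy'' for bipartite pairwise intersecting families, which you yourself flag as the principal obstacle. The Ramsey machinery actually available in the paper (Theorem \ref{Theorem:Realize}) applies only to \emph{strictly 2-intersecting} families, where every pair of sets meets; in the bipartite setting same-color pairs may be disjoint, so the cap/cup realization apparatus, and with it Theorem \ref{Lemma:Monotone} and Theorem \ref{Theorem:SeparateMany}, is unavailable. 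Worse, the author exhibits a concrete obstruction to any bipartite analogue of the separation step: two rulings $\L_1,\L_2$ of the hyperbolic paraboloid $z=xy$ give a bipartite pairwise intersecting family of lines in which no three bi-colored pairs can be separated by a single plane. Your proposal never engages with this example, and any approach that routes through ``separate many pairs by one plane'' must.

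Second, even granting Case~(B) of your dichotomy, the descent into the plane $H=\pi_{\vec v}^{-1}(\ell)$ is broken: the fact that $K_1\cap K_2\neq\emptyset$ in $\reals^3$ and that both $K_1\cap H$ and $K_2\cap H$ are non-empty does not imply $(K_1\cap H)\cap(K_2\cap H)\neq\emptyset$, since the witness point of $K_1\cap K_2$ generally lies off $H$. Hence the cross-sections inside $H$ need not form a bipartite pairwise intersecting family in $\reals^2$, and the proposed dimension descent has nothing to induct on. Case~(A) likewise cannot be extracted from Theorem \ref{theorem:Fractional}: fractional Helly needs $\Omega(n^3)$ intersecting triples within a single color class, and the bipartite hypothesis gives no control over triples of $\tilde\K_1$ alone. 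In short, the proposal is a plausible-sounding reduction scheme, but every load-bearing step is either unproved or collides with the paper's own counterexample, which is consistent with the conjecture's status as open.
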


In subsequent years, much attention has focused on the following variant of Conjecture \ref{Conj:2colored}, which was highlighted in 2021 by B\'ar\'any and Kalai \cite[Conjecture 8.1]{BaranyKalaiSurvey}. 

\begin{conjecture}\label{Conj:Pairwise}
	There is an absolute constant $c>0$ with the following property.
	For any family $\K$ of $n$ convex sets in $\reals^3$, with the property that any two sets $K_1,K_2\in \K$ have a non-empty intersection $K_1\cap K_2\neq \emptyset$, there is a line crossing $cn$ of the sets $K\in \K$.
	\end{conjecture}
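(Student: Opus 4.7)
The plan is to reduce the three-dimensional problem to a structural question about pairwise intersecting convex sets in $\reals^2$ and then to lift the resulting combinatorial data back to $\reals^3$ through an analysis of line arrangements. First, fix a direction $\vec v\in S^{2}$ with perpendicular plane $\pi$: perpendicular projection preserves pairwise intersection, so $\K_\pi = \{\pi(K):K\in \K\}$ is itself pairwise intersecting in $\reals^2$, and a line in $\reals^3$ parallel to $\vec v$ crosses $K\in\K$ if and only if its foot lies in $\pi(K)$. Consequently any point of $\pi$ of depth $\Omega(n)$ in $\K_\pi$ immediately lifts to the desired transversal. The obstruction is that pairwise intersecting convex sets in the plane need not contain any deep point --- the standard ``fan of thin rectangles'' around a small pivot realizes arbitrarily small point-depth while every pair still meets --- so a pointwise depth bound is too weak on its own.

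The heart of the argument would be a Ramsey-type lemma that I would establish for pairwise intersecting convex sets in $\reals^2$: from any such family of $m$ sets one can extract a subfamily of size $\Omega(m)$ with a ``canonical'' transversal structure. The target is a dichotomy in which the extracted subfamily is either (i) pierced by a single point, or (ii) crossed by a single line of $\pi$, possibly after partitioning into constantly many such classes. I expect the proof of this lemma to combine extremal tools, such as Ramsey or Erd\H{o}s--Szekeres-type extraction of monotone subsequences along a radial sweep around a pivot set, with the pairwise intersection hypothesis, which forces rigid incidence patterns between boundaries and rules out configurations that remain ``spread out'' in every direction simultaneously.

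Lifting the planar outcome back to $\reals^3$ is immediate in case (i), since the line $\{p\}+\reals\vec v$ through a deep point $p\in\pi$ meets $\Omega(n)$ members of $\K$; it is subtle in case (ii), because a planar transversal $\ell\subset\pi$ to the projections need not intersect the original three-dimensional sets. To handle (ii) I would apply the planar Ramsey lemma to several carefully chosen projection directions, producing a bounded collection of candidate lines in $\reals^3$; the pairwise intersection hypothesis then translates into many forced incidences between these candidate lines and the sets of $\K$, and an averaging or extremal argument over their arrangement should isolate one line crossing $\Omega(n)$ members of $\K$. The main obstacle I foresee is precisely this last step: controlling the interaction between case~(ii) outcomes arising from different projection directions, and converting the combinatorial structure of a three-dimensional arrangement of candidate lines into one concrete good transversal --- without losing more than a constant factor at any stage --- is where most of the technical effort, and the invocation of the ``structure of line arrangements in $\reals^3$'' promised by the abstract, is likely to live.
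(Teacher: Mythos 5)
Your opening reduction is sound as far as it goes (a deep point in the vertical projection lifts to a vertical transversal, and this is exactly how the paper disposes of the case where $\Omega(n^3)$ projected triples have a common point, via Theorem \ref{theorem:Fractional}), but from there the proposal has two genuine gaps. First, the planar Ramsey lemma you posit is both unproven and substantially stronger than what is actually available. You ask for a subfamily of size $\Omega(m)$ with a canonical transversal structure, with a dichotomy ``pierced by a point'' versus ``crossed by a line of $\pi$.'' The paper's planar result (Theorem \ref{Theorem:Realize}) is of a different and much weaker shape: from a strictly 2-intersecting family of size $N_1(n)$ (a Ramsey-type bound, enormous compared to $n$) one extracts only $n$ sets that are \emph{realized by an $n$-cap or $n$-cup of lines} -- no positive fraction, and no planar line transversal at all. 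The loss of density is then recovered not in the plane but by a $(p,q)$-style counting argument in $\reals^3$: every $p$-subset of $\K$ contains a good $q$-configuration, hence there are $\Omega(n^q)$ of them, and a pigeonhole over the $O(1)$ ``determining'' data pins down a single plane. Your proposal has no mechanism to recover density once the Ramsey extraction has discarded all but a vanishing fraction of the family.

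Second, the step you yourself flag as the main obstacle -- lifting the case-(ii) outcome back to $\reals^3$ -- is precisely where the paper's real machinery lives, and ``several carefully chosen projection directions'' plus ``an averaging argument'' is not a substitute for it. The paper never produces a line in the projection plane crossing many projected sets. Instead it (a) lifts the realizing cap/cup to a \emph{monotone} sequence of lines in $\reals^3$; (b) proves that any monotone sequence of $n$ lines admits a plane separating $\Theta(n^2)$ of its pairs (Theorem \ref{Theorem:SeparateMany}, itself a Ramsey argument over a four-way classification of line triples); (c) applies the Alon--Kalai fractional Helly theorem for hyperplanes (Theorem \ref{theorem:FractionalHyperplanes}, in its ``determined hyperplane'' form) to the auxiliary sets $K_i\backsim K_j$ to fix one plane $\pi_0$ determined by $O(1)$ sets; (d) shows via Lemma \ref{Lemma:3sets} that inside a separating plane some line crosses three of the sets, yielding $\Theta(n^3)$ collinear-in-$\pi_0$ triples; and (e) concludes with the two-dimensional fractional Helly for lines within $\pi_0$. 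None of steps (b)--(d) is anticipated in your sketch, and without the separating-plane idea the pairwise-intersection hypothesis gives you no handle on where a candidate line should meet the three-dimensional sets. So the proposal correctly identifies the shape of the difficulty but does not contain the ideas that resolve it.
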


B\'ar\'any and Kalai \cite{BaranyKalaiSurvey} also supplied a construction of 5 pairwise intersecting convex sets with no common line transversal, which implies that the constant $c$ in Conjecture \ref{Conj:Pairwise} cannot be improved beyond 4/5 (even for large $n$).

\medskip

So far, the conjectures have been confirmed only for families of ``sufficiently round" convex sets (e.g., cylinders) \cite{cylinders} or, alternatively, flat sets that lie in parallel planes \cite{flats}. 

\medskip
To appreciate the difficulty of tackling Conjectures \ref{Conj:Pairwise} and \ref{Conj:2colored}, let us consider a relaxed fractional setting, in which only $\eps|K_1||\K_2|$ of the pairs $(K_1,K_2)\in \K_1\times \K_2$ intersect, for some $0<\eps\leq 1$. Specifically, if $\K_1$ is comprised of generic lines in $\reals^3$, and each element of $\K_2$ is crossed by $\eps|\K_1|$ lines of $\K_1$, then a transversal by $f(\eps)$ lines to $\K_2$ would yield a so called small-size weak $\eps$-net \cite{AlonSelections,AKMM} for line transversals in $\reals^3$ (also see Section \ref{Sec:Conclude}), which has been refuted in 2022 by Cheong and Goaoc and Holmsen \cite{CheongGoaocHolm}.

\subsection{\bf Our result} We establish Conjecture \ref{Conj:Pairwise}.

\begin{theorem}\label{Theorem:Main}
	There is an absolute constant $c>0$ with the following property.
	For any family $\K$ of $n$ convex sets in $\reals^3$, with the property that any two sets $K_1,K_2\in \K$ have a non-empty intersection $K_1\cap K_2\neq \emptyset$, there is a line crossing $cn$ of the sets $K\in \K$.
	\end{theorem}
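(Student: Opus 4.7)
The plan is to reduce Theorem~\ref{Theorem:Main} to a planar Ramsey-type statement, in line with the two ingredients flagged in the introduction. Fix a generic direction $v \in \SS^2$ and let $\pi \colon \reals^3 \to \Pi = v^\perp$ denote the orthogonal projection. The planar family $\pi(\K) = \{\pi(K) : K \in \K\}$ inherits the pairwise intersection property in $\Pi$, since any point of $K_1 \cap K_2$ projects into $\pi(K_1) \cap \pi(K_2)$. A vertical line $\pi^{-1}(p)$ in $\reals^3$ crosses exactly those sets of $\K$ whose projections contain the point $p \in \Pi$, so it suffices to produce either a point of $\Pi$ contained in $\Omega(n)$ of the planar projections, or an $\Omega(n)$-size subfamily that admits some other transversal structure liftable back to $\reals^3$.

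The heart of the argument is a Ramsey-type dichotomy for pairwise intersecting planar families: any such family of $n$ convex sets in $\reals^2$ should contain a subfamily of size $\Omega(n)$ exhibiting additional rigidity---for instance a common point transversal, a common line transversal, or a canonically ordered ``monotone'' chain of sets along a convex curve. I would attempt this dichotomy by pigeonholing on a combinatorial invariant of the pairwise intersections (e.g., on which side of a fixed axis each $K \cap K'$ meets), and then applying planar Helly-type and ordering arguments within each pigeonhole class to extract a rigid subfamily. The combination of pairwise intersection in $\reals^2$ and a Dilworth-style extraction should yield linearly large rigid subfamilies, which is what the downstream 3D argument needs.

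The final step is to lift the planar rigid subfamily $\K''$ to a 3D line transversal. If $\K''$ admits a point transversal $p \in \Pi$, the vertical line $\pi^{-1}(p)$ already crosses $\Omega(n)$ sets of $\K$ and we are done. Otherwise $\K''$ is pierced only by some planar line $\ell \subset \Pi$, and the $\Omega(n)$ preimage sets lie in the vertical slab $\pi^{-1}(\ell)$; here I would iterate the planar argument inside the slab with a second projection direction, while exploiting the structure of the two-parameter family of lines in $\reals^3$ (e.g., via Pl\"ucker duality or geometric-permutation combinatorics of line arrangements). The principal obstacle is precisely this lifting: a 2D line transversal for the projections does not in itself yield a 3D line transversal, and the intersections of the sets of $\K$ with the slab may lose the pairwise intersection property enjoyed by $\K$. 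Overcoming this gap is where the two ingredients of the proof must interact---likely through an adaptive choice of the projection direction (via an averaging or Borsuk--Ulam-type argument) combined with a direct parametric study of line transversals to sets confined to a slab in $\reals^3$.
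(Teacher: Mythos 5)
Your opening move (project vertically, and note that a point pierced by $\Omega(n)$ of the projections lifts to a vertical transversal line) matches the paper, and your instinct that the remaining case requires a Ramsey-type extraction of a ``line-like'' subfamily is also on target. But there are two genuine gaps. First, the linear-size dichotomy you posit for the planar projections is not available: the Ramsey/Erd\H{o}s--Szekeres-type extraction of a realizable (cap- or cup-like) subfamily from a strictly 2-intersecting planar family only produces a subfamily of \emph{constant} size $q$ from a family of size $N_1(q)$ (the dependence goes through iterated Ramsey numbers), not an $\Omega(n)$-size rigid subfamily. The paper never extracts a linear structured subfamily; instead it runs the extraction inside \emph{every} $p$-subset of $\K$ (with $p$ a constant), obtains for each one a constant-size realizable sequence together with a plane separating a constant fraction of its pairs, and then amplifies by a pigeonhole/counting argument over the $\Omega(n^p)$ subsets --- using the Alon--Kalai fact that the separating plane can be taken from a bounded ``determined'' collection --- to find a single plane $\pi_0$ that works for $\Omega(n^{q-6})$ of them. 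Without this $(p,q)$-style amplification your dichotomy, even if some version of it were true, would only give a constant-size structured piece and hence no fractional conclusion.

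Second, and more importantly, the lifting step that you correctly identify as the principal obstacle is left entirely open, and the mechanism you gesture at (iterating the projection inside a slab, Pl\"ucker duality, geometric permutations) is not how the difficulty is resolved. The actual mechanism is: (i) a monotone sequence of $q$ lines in $\reals^3$ realizing the extracted sets admits a plane $\pi$ separating $\Theta(q^2)$ of the pairs, in the sense that $\pi$ meets the vertical segment joining the two lines over their projected crossing point; (ii) for each separated pair $K_i,K_j$, strict 2-overlap forces the cross-sections $K_i\cap\pi$, $K_j\cap\pi$ to be confined near the vertex $v_{i,j}$ of the projected line arrangement inside $\pi$, so a counting argument over the arrangement produces a line in $\pi$ crossing at least $3$ of the sets; and (iii) the resulting $\Theta(n^3)$ line-crossable triples inside the fixed plane $\pi_0$ feed into the Alon--Kalai fractional Helly theorem for hyperplanes applied to the planar family $\{K\cap\pi_0\}$, yielding a single line in $\pi_0$ crossing $\Omega(n)$ sets. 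None of steps (i)--(iii) appears in your sketch, and without them the proposal does not close; in particular a 2D line transversal to the projections, which is what your slab construction starts from, carries essentially no information about 3D line transversals, as the Alon--Kalai--Matou\v{s}ek--Meshulam counterexamples already indicate.
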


 \begin{figure}[htb]
 \begin{center}
 	\includegraphics[scale=0.4]{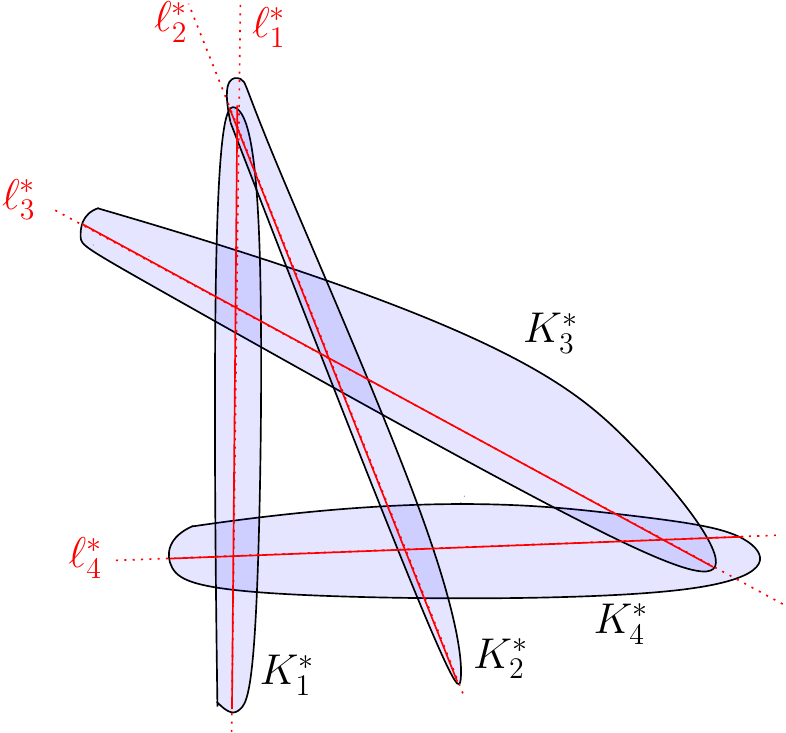}\hspace{0.5cm}\includegraphics[scale=0.4]{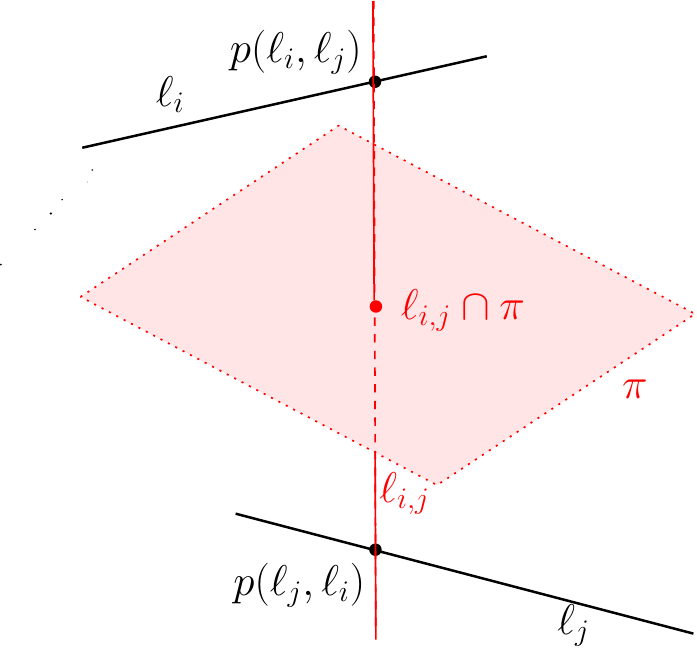}\hspace{0.5cm}	\includegraphics[scale=0.4]{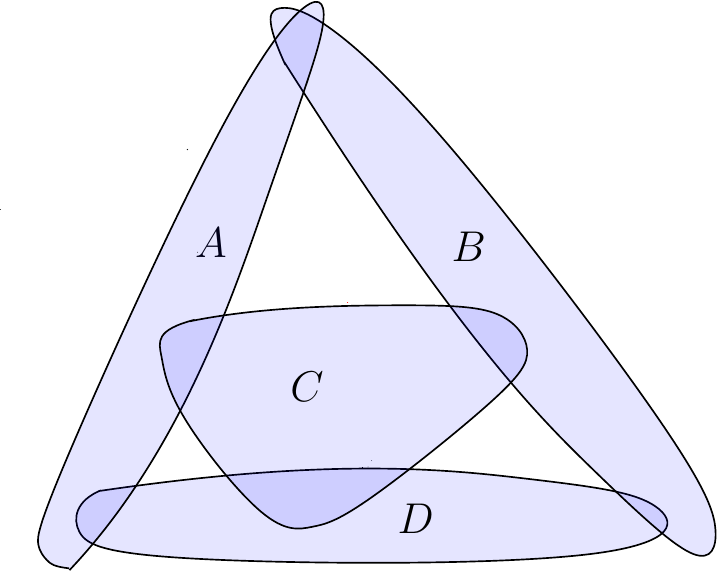}
 	\caption{\small Left: A projected strictly 2-intersecting family $\K^*=\{K_1^*,\ldots,K_4^*\}$ in $\reals^2$, and its realization by $4$ lines $\ell_1^*,\ldots,\ell^*_4$. Center: The lines $\ell_i$ and $\ell_j$ are separated by the plane $\pi$ -- the vertical line $\ell_{i,j}$ through $\ell_i$ and $\ell_j$ crosses $\pi$ in-between $p(\ell_i,\ell_j)$ and $p(\ell_j,\ell_i)$. Since $p(\ell_i,\ell_j)\in K_i$ and $p(\ell_j,\ell_i)\in K_j$, the intersection $\ell_{i,j}\cap \pi$ must lie in $K_i\cup K_j$. Right: The depicted strictly 2-intersecting family $\{A,B,C,D\}$ in $\reals^2$ cannot be realized, in the strong sense detailed above, by any 4 lines.}\label{Figure:realize}
 \end{center}
 \end{figure}
 
Here is a somewhat less formal sketch of our argument. Consider a vertical projection of $\K=\{K_1,\ldots,K_n\}$. If the projected family $\K^*=\{K^*_1,\ldots,K^*_n\}$ in $\reals^2$ encompasses $\Theta(n^3)$ triples $K^*_{i},K^*_{j},K^*_k$ that satisfy $K^*_{i}\cap K^*_{j}\cap K^*_k\neq \emptyset$, then Theorem \ref{theorem:Fractional} readily yields the desired {\it vertical} line in $\reals^3$ crossing $\Theta(n)$ elements of $\K$. Hence, it pays to focus on such families $\K$ whose vertical projections $\K^*$ are {\it strictly 2-intersecting} -- any two of their members intersect, yet no three of them have a common  intersection.

Let us assume for the sake of simplicity that the strictly 2-intersecting family $\K^*$ is ``line-like", and can be {\it realized} by a set $\{\ell^*_1,\ldots,\ell^*_n\}$ of $n$ lines in $\reals^2$ in the following strong sense: for any $1\leq i<j\leq n$ the intersection $\ell^*_i\cap \ell^*_j$ lies in $K_i^*\cap K_j^*$; see Figure \ref{Figure:realize} (left). Lifting the lines $\ell_i^*$ back to $\reals^3$ would then yield a line set $\L=\{\ell_1,\ldots,\ell_n\}$ with the following property:
for any $1\leq i<j\leq n$, the vertical line $\ell_{i,j}$ through $\ell_i$ and $\ell_j$ (which is raised over $\ell_i^*\cap \ell_j^*$) crosses both $K_i$ and $K_j$, within the respective intervals $K_i\cap \ell_i$ and $K_j\cap \ell_j$. 

The crucial observation is that there must exist a plane $\pi$ in $\reals^3$ that {\it separates} $\Theta(n^2)$ pairs $\{\ell_i,\ell_j\}\in {\L\choose 2}$, in the following sense: $\pi$ crosses $\ell_{i,j}$ in-between the points $p(\ell_i,\ell_j):=\ell_{i,j}\cap \ell_i$ and $p(\ell_j,\ell_i):=\ell_{i,j}\cap \ell_j$ (which belong, respectively, to $K_i$ and $K_j$). 
See Figure \ref{Figure:realize} (center). For each of these pairs $\ell_i,\ell_j$, at least one of the sets $K_i,K_j$ must cross $\pi$ in the relative vicinity of $v_{i,j}=\ell_{i,j}\cap \pi$.
To be precise, let $\L'$ denote the line family $\{\ell'_1,\ldots,\ell'_k\}$, where each element $\ell_k'$ is obtained by vertically lifting $\ell^*_k$ to $\pi$, and consider the subdivision of $\pi$ by $\L'_{i,j}:=\L'\setminus \{\ell_i,\ell_j\}$.
Then the cross-section of $K_i\cup K_j$ by $\pi$ can only lie in the cell that contains $v_{i,j}$ (or, else, some three members of $\K$ would be crossed by the same vertical line). 
Applying this observation to $\Theta(n^2)$ pairs $\ell_i,\ell_j$ of lines that are separated in the above sense by $\pi$. Using the convexity of each cross-section $K_i\cap \pi$, one can derive a subset $\L_{rich}$ of $\Theta(n)$ ``rich" lines $\ell_i\in \L$ whose respective sets $K_i$ are each crossed by $\Theta(n)$ lines $\ell'_j$ (and each time in the vicinity of the vertex $v_{i,j}=\ell'_i\cap \ell'_j$).
 A standard sampling argument then yields a transversal by $O(1)$ lines $\ell'_j$ to {\it all} such sets $K_i$ with $\ell_i\in \L_{rich}$.

Unfortunately, the outlined argument does not apply to general families of pairwise intersecting convex sets in $\reals^3$, as arrangements of strictly 2-intersecting families of convex sets in $\reals^2$ happen to be a far richer category (in topological terms) than line arrangements. Indeed, there exist families of just 4 sets that cannot be realized, in the strong sense detailed above, by any 4 lines; see Figure \ref{Figure:realize} (right).\footnote{See a recent study by \'{A}goston et al. \cite{Orientation} for a more comprehensive treatment of strictly 2-intersecting families of convex sets in $\reals^2$.} Instead, we establish the following Ramsey-type property: for any $n$, any sufficiently large strictly 2-intersecting family of convex sets in $\reals^2$ must contain a realizable (i.e., ``line-like") sub-family of size $n$. (Moreover, the $n$ lines in our realization form a monotone convex chain -- either an $n$-cap or an $n$-cup \cite[Section 3]{JirkaBook}.) The desired plane $\pi$, which contains a line crossing $\Theta(n)$ elements of $\K$, will be obtained by applying Theorem \ref{theorem:FractionalHyperplanes} of Alon and Kalai to a certain family $\K=\{K_{i,j}\}_{1\leq i<j\leq n}$, where each set $K_{i,j}$ encompasses all the vertical line segments $ab$ connecting $a\in K_i$ and $b\in K_j$.

\subsection{Paper organization}
 The rest of the paper is organized as follows. In Section \ref{Sec:Prelim} we introduce the basic notation, in which the proof of Theorem \ref{Theorem:Main}
  is cast, and formulate the key facts concerning strictly 2-intersecting families of convex sets in $\reals^2$, and arrangements of lines in $\reals^3$.

In Section \ref{Sec:Main} we establish Theorem \ref{Theorem:Main}. 

In Section \ref{Sec:Realize2d} we show that for any integer $n>0$, any sufficiently large family of strictly 2-intersecting family of convex sets in $\reals^2$ contains a $n$-size sub-family that can be realized by $n$ lines which form either a cap or cap.

In Section \ref{Sec:Separate}, we show that any family of $n$ lines which possess a certain monotone structure in $\reals^3$ must contain $\Theta(n^2)$ pairs that are separated by a single plane.\footnote{Though this is not essential for our proof of Theorem \ref{Theorem:Main}, the statement easily extends to {\it all} families of $n$ lines in $\reals^3$; see Section \ref{Sec:Conclude}.}

In Section \ref{Sec:Conclude}, we discuss the prospective extensions of our results to higher dimensions.

\section{Preliminaries}\label{Sec:Prelim}

 \medskip
 For any point $p\in \reals^d$, we use $p^*$ and $\hat{p}$ to denote, respectively, the vertical projection to $\reals^{d-1}$ and the vertical line $\{q\in \reals^d\mid q^*=p^*\}$ through $p$. Accordingly, for any set $A\subseteq\reals^d$, we denote $A^*:=\{p^*\mid p\in A\}$ and $\hat{A}:=\{p\mid p^*\in A^*\}=\bigcup_{p\in A} \hat{p}$.
 
Unless specified otherwise, for any non-vertical line $\ell$ in $\reals^2$ we use $\ell^+$ (resp., $\ell^-$) to denote the open half-plane above (resp., below) $\ell$, and the same convention applies to open halfspaces determined by planes in $\reals^3$.

 \medskip
 \noindent{\bf Ramsey's theorem.}   

\begin{theorem}[\cite{ErdSz,Ramsey}] \label{Theorem:Ramsey}For any integers $0<k\leq m$ and $c>0$ there exists an integer $R=R_k(m,c)\geq m$ with the following property:

 Any $c$-coloring $\chi:{[R]\choose k}\rightarrow [c]$ of the edges of the complete $k$-uniform hypergraph $\left([R],{[R]\choose k}\right)$ yields an $m$-subset $A\in {[R]\choose m}$ so that all the edges $e\in {A\choose k}$ are assigned the same color by $\chi$.	
\end{theorem}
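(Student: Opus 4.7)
The plan is to proceed by induction on the uniformity $k$, which is the classical Erd\H{o}s--Szekeres--Rado approach to Ramsey's theorem. For the base case $k=1$, the pigeonhole principle suffices: any $c$-coloring of $[R]$ with $R\geq c(m-1)+1$ must assign a common color to at least $m$ elements, so I would set $R_1(m,c):=c(m-1)+1$.

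For the inductive step, I would assume $R_{k-1}(m',c)$ exists for every $m'$ and use a nested extraction procedure. Start with a sufficiently large $R$ (to be quantified momentarily) and pick any $a_1\in[R]$. The coloring $\chi$ induces a $c$-coloring $\chi_1$ on the $(k-1)$-subsets of $[R]\setminus\{a_1\}$ via $\chi_1(S):=\chi(S\cup\{a_1\})$; by the inductive hypothesis, for an appropriate target size $m_1$ there is a subset $B_1\subseteq[R]\setminus\{a_1\}$ with $|B_1|\geq m_1$ on which $\chi_1$ is monochromatic, say of color $c_1\in[c]$. Picking $a_2\in B_1$ and repeating the construction inside $B_1$ yields $B_2\subseteq B_1\setminus\{a_2\}$ together with a color $c_2$, and so on. Iterating produces a chain $B_1\supseteq B_2\supseteq\cdots$, a sequence of elements $a_1,\ldots,a_N$, and associated colors $c_1,\ldots,c_{N-k+1}\in[c]$. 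The key invariant guaranteed by the construction is that for any indices $i_1<i_2<\cdots<i_k\leq N$, one has $\chi(\{a_{i_1},\ldots,a_{i_k}\})=c_{i_1}$, since at step $i_1$ the remaining elements $a_{i_2},\ldots,a_{i_k}$ were already forced to lie in the $\chi_{i_1}$-monochromatic set $B_{i_1}$.

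To finish, I would apply the pigeonhole principle to the sequence $c_1,\ldots,c_{N-k+1}$ of colors: choosing $N\geq c(m-1)+k$ guarantees $m$ indices $i_1<\cdots<i_m$ whose colors agree, and then $A:=\{a_{i_1},\ldots,a_{i_m}\}$ is $\chi$-monochromatic by the invariant above, which establishes the claim.

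The only real obstacle is bookkeeping the recursion: each extraction shrinks the current universe from size $s$ to at least $R_{k-1}(m',c)$ for the next target size $m'$, so the parameters $m_1,m_2,\ldots$ must be chosen backwards -- starting from the pigeonhole requirement $m_N\geq 1$ and working outward -- to ensure every $B_i$ is non-empty. Unwinding yields the standard tower-type bound on $R_k(m,c)$, which is more than sufficient for the subsequent geometric arguments in the paper; no finer quantitative control is required.
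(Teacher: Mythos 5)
Your argument is the standard Erd\H{o}s--Szekeres--Rado induction on the uniformity $k$ (pigeonhole base case, one-vertex extraction inducing a $(k-1)$-uniform coloring, the invariant $\chi(\{a_{i_1},\ldots,a_{i_k}\})=c_{i_1}$, and a final pigeonhole on the colors $c_i$), and it is correct, including the backward choice of target sizes needed to keep the nested sets non-empty. The paper itself offers no proof of this statement --- it is quoted as a classical result with citations to Ramsey and Erd\H{o}s--Szekeres --- so there is nothing to compare against beyond noting that your write-up is exactly the textbook proof those references contain.
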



\medskip
 \noindent{\bf Caps and cups of points and lines in $\reals^2$.} 
 We say that a sequence $p_1,\ldots,p_n$ of $n\geq 3$ points in general position in $\reals^2$ forms an {\it $n$-cap} (resp., {\it $n$-cup}) if the $x$-coordinates of the points $p_i$ form an increasing sequence and for all $1\leq i<j<k\leq n$ the point $p_j$ lies above (resp., below) the segment $p_ip_k$; see Figure \ref{Fig:cap-cup}.
 
 \begin{figure}[htb]
 \begin{center}
 	\includegraphics[scale=0.4]{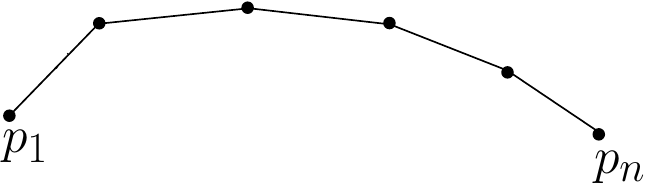}\hspace{2cm}\includegraphics[scale=0.4]{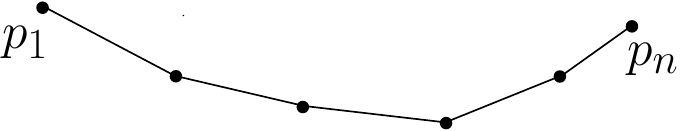}
 	\caption{\small Left: The sequence $p_1,\ldots,p_6$ of $n=6$ points forms a 6-cap. Right: A 6-cup of points.}\label{Fig:cap-cup}
 \end{center}
 \end{figure}
 
  \begin{figure}[htb]
 \begin{center}
 	\includegraphics[scale=0.4]{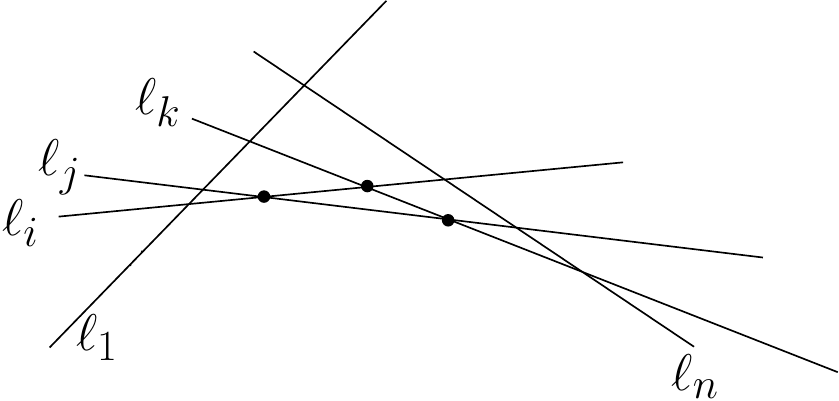}\hspace{1.5cm}\includegraphics[scale=0.4]{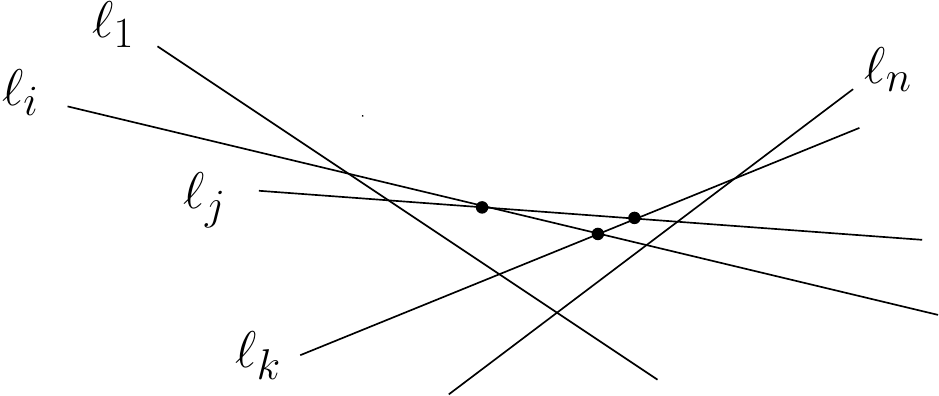}
 	\caption{\small An $n$-cap and an $n$-cup of lines (resp., left and right).}\label{Fig:cap-cup-lines}
 \end{center}
 \end{figure}
 
 We say that an ordered sequence of $n\geq 3$ non-vertical lines in $\reals^2$, so that no three of them pass through the same point, is an {\it $n$-cap} (resp., {\it $n$-cup}) if their respective slopes form a decreasing (resp., increasing) sequence and, furthermore, the intersection $\ell_i\cap \ell_k$ lies above (resp., below) $\ell_j$, for all $1\leq i<j<k\leq n$. See Figure \ref{Fig:cap-cup-lines}.


\subsection{Realizing sequences of planar convex sets with cap and cups} 
 We say that a finite family $\K$ of convex sets in $\reals^d$ is {\it strictly 2-intersecting} if every two of its sets have a non-empty common intersection, yet no three distinct sets of $\K$ do.
 
We say that a sequence $K_1,\ldots,K_n$ of $n\geq 3$ strictly $2$-intersecting convex sets in $\reals^2$ is {\it realized} by a sequence $\ell_1,\ldots,\ell_n$ of $n$ lines in $\reals^2$ if the segments $s_i=K_i\cap \ell_i$ and $s_j=K_j\cap \ell_j$ intersect whenever $1\leq i<j\leq n$. See Figure \ref{Fig:realize-sequence}.

 \begin{figure}[htb]
 \begin{center}
 	\includegraphics[scale=0.5]{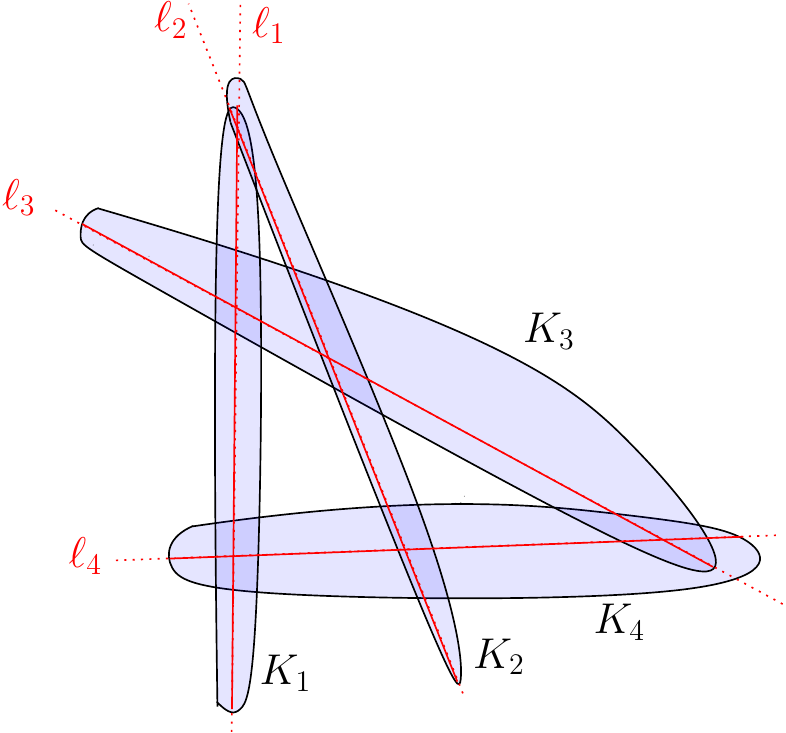}
 	\caption{\small The depicted sequence $K_1,\ldots,K_4$ is realized by 4 lines $\ell_1,\ldots,\ell_4$.}\label{Fig:realize-sequence}
 \end{center}
 \end{figure}



\medskip
In Section \ref{Sec:Realize2d} we establish the following strengthening of the Erd\H{o}s-Szekeres theorem \cite{ErdSz}.

\begin{theorem}\label{Theorem:Realize} For any positive integer $n$ there is a number $N_1=N_1(n)\geq n$ with the following property.

Any strictly 2-intersecting family $\K$ of $N_1\geq 3$ compact convex sets in $\reals^2$ 
contains a sequence $K_1,\ldots,K_n$ of $n$ distinct elements that can be realized by an $n$-line sequence $\ell_1,\ldots,\ell_n$ that is either an $n$-cap or an $n$-cup.
\end{theorem}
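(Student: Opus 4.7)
The plan is to deduce Theorem \ref{Theorem:Realize} from Theorem \ref{Theorem:Ramsey} by extracting a sub-family of $\K$ with a highly structured combinatorial order type. For every pair $\{i,j\}\subset [N]$, I would pick a witness point $p_{ij}\in K_i\cap K_j$ (nonempty by pairwise intersection), and for each $i$ a representative $r_i\in K_i$. Strict $2$-intersection forces all the $p_{ij}$ to be distinct; after a small generic perturbation inside the open convex regions $K_i\cap K_j$ (and inside each $K_i$), I may further assume the combined point set $\{r_i\}\cup\{p_{ij}\}$ is in general position.

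I would then define a coloring $\chi$ of the $3$-subsets of $[N]$ using a bounded number of colors, where the color on $\{i,j,k\}$ records the combinatorial order type (the orientation sign of every triangle) of the six-point configuration $\{r_i,r_j,r_k,p_{ij},p_{ik},p_{jk}\}$. By Theorem \ref{Theorem:Ramsey}, setting $N_1(n)\geq R_3(n,c)$ where $c$ is the absolute number of colors, one extracts a monochromatic subset $I\subset[N]$ with $|I|=n$.

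It then remains to verify that any such monochromatic subset admits a cap or cup realization. The uniform orientation of the triples $r_ir_jr_k$ forces $\{r_i\}_{i\in I}$ to be an $n$-cap or an $n$-cup by the standard Erd\H{o}s--Szekeres reduction; assume WLOG the cap case. The uniform orientations of the triangles $p_{ij}p_{ik}p_{jk}$, together with the mixed triangles involving an $r$-point and two $p$-points, would arrange the witnesses $\{p_{ij}\}_{j\in I\setminus\{i\}}$ monotonically along a common direction inside $K_i$; convexity of $K_i$ then lets me choose a chord $s_i=K_i\cap\ell_i$ that meets each of the $n-1$ pairwise disjoint convex regions $K_i\cap K_j$. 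The monochromatic orientation data simultaneously certifies that the slopes of $\ell_1,\ldots,\ell_n$ are strictly decreasing, so that $(\ell_i)$ forms an $n$-cap.

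The main obstacle I expect is the realization step: producing a single chord $s_i\subset K_i$ that simultaneously meets the $n-1$ pairwise disjoint convex sets $K_i\cap K_j$ is a nontrivial common-transversal condition, since strict $2$-intersection implies the regions $\{K_i\cap K_j\}_{j\ne i}$ are disjoint inside $K_i$. If the $3$-uniform coloring of six-point order types turns out to be insufficient to force this rigidity, I would strengthen the Ramsey coloring to $4$-subsets, adding orientation checks certifying that the candidate lines for index $i$ extracted from distinct triples $\{i,j,k\}$ and $\{i,j,l\}$ are mutually compatible. Theorem \ref{Theorem:Ramsey} accommodates any fixed uniformity, so this only inflates the growth of $N_1(n)$.
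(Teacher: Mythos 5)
Your overall skeleton --- color the triples of $\K$ by a bounded amount of combinatorial data, apply Theorem \ref{Theorem:Ramsey}, and then argue that a monochromatic subfamily is realizable --- is indeed the skeleton of the paper's proof. But the specific coloring you choose does not carry enough information, and the step you yourself flag as ``the main obstacle'' is a genuine gap that your proposed fix does not close.

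The realization step requires, for each $i$, a single chord $s_i=K_i\cap\ell_i$ that meets the $n-1$ pairwise disjoint convex regions $K_i\cap K_j$ (since $s_i\cap s_j\subseteq K_i\cap K_j$). This is a common line transversal condition on full convex sets, and it is simply not determined by the order type of one witness point $p_{ij}$ per region: three disjoint convex sets, each containing a prescribed point, may admit no common transversal at all, no matter how the prescribed points are oriented. Escalating to $4$-uniform colorings of witness order types does not help, because the obstruction is not an inconsistency between candidate lines extracted from different triples --- it is that the existence of \emph{any} candidate line through the regions is a geometric fact about the regions themselves, invisible to a single point sampled from each. (A Hadwiger-type criterion would reduce the problem to triples of regions, but certifying that criterion again requires knowledge of the regions, not of witness points.) The paper's Figure \ref{Figure:realize} (right) shows that realizability can fail already for $4$ sets, so some genuinely geometric input is unavoidable.

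The paper supplies that input in two ways that your proposal lacks. First, its coloring records properties quantified over \emph{all} points of the intersections $K_i\cap K_j$ (whether every choice of witnesses forms a $3$-cap/$3$-cup, or two of the intersections are crossed by a common vertical line, together with the orientation of Lemma \ref{Lemma:OrientSets}); this robustness is what later arguments exploit. Second, and more importantly, it never tries to thread a line through all $n-1$ regions inside a single $K_i$ directly. Instead it anchors the construction: in the cap/cup case it fixes two sets $A,B$ appearing in many $3$-caps $(A,K_j,B)$ and takes $\ell_j$ through the endpoints $a_j\in\partial A$, $b_j\in\partial B$ of the boundary arc of the hole $\Delta(A,K_j,B)$; the segment $a_jb_j$ lies in $K_j$ by Lemma \ref{Lemma:OrientSets}(iii), and the pairwise crossings are forced by the structure of the holes (Lemma \ref{Lemma:4sets} and Claim \ref{Claim:Cross}), because all these segments pass near the common vertex $v$ of $\partial(A\cup B)$. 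A separate anchored construction, plus a second Ramsey step, handles the ``vertical'' type. None of this structural analysis of holes in arrangements of strictly $2$-intersecting sets is recoverable from your witness-point order types, so the proof as proposed does not go through.
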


\subsection{Realizing a sequence of convex sets in $\reals^3$ by a monotone sequence of lines} 

\noindent{\bf Definition.} We say that a family $\K$ of convex sets in $\reals^3$ is {\it strictly 2-overlapping} if the projected family $\K^*:=\{K^*\mid K\in \K\}$ is strictly 2-intersecting.

A sequence of $n\geq 3$ pairwise intersecting convex sets in $\reals^3$ is {\it realized} by a sequence $\ell_1,\ldots,\ell_n$ of $n$ non-vertical and mutually non-parallel lines in $\reals^3$ if the projected segments $s^*_i=(K_i\cap \ell_i)^*$ and $s_j^*=(K_j\cap \ell_j)^*$ intersect whenever $1\leq i<j\leq n$. In other words, the vertical line $\ell_{i,j}$ through any pair $\ell_i$ and $\ell_j$ must pass through $K_i\cap \ell_i$ and $K_j\cap \ell_j$.

\begin{figure}[htb]
 \begin{center}
\includegraphics[scale=0.45]{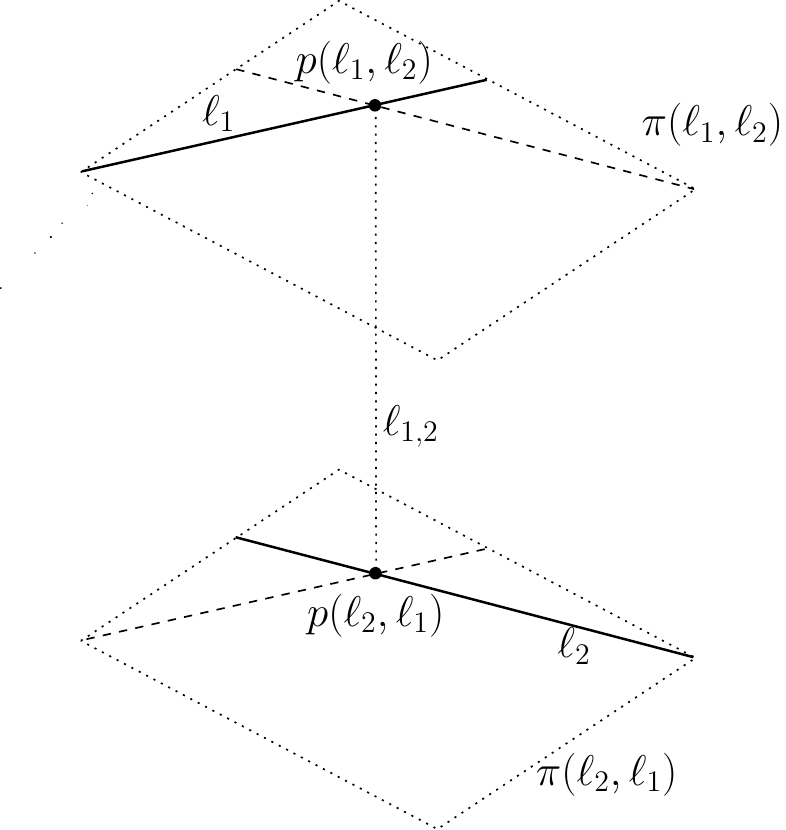}\hspace{2cm}\includegraphics[scale=0.45]{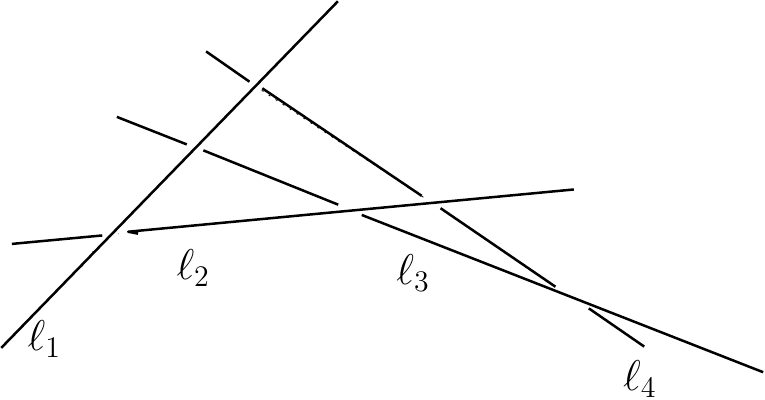}\\

 	\caption{\small Left: The lines $\ell_1$ and $\ell_2$ satisfy $\ell_1\succ \ell_2$. Depicted are the parallel planes $\pi(\ell_1,\ell_2)\supset \ell_1$ and $\pi(\ell_2,\ell_1)\supset \ell_2$, the unique vertical line $\ell_{1,2}$ through $\ell_1$ and $\ell_2$, and the points $p(\ell_1,\ell_2)=\ell_{1,2}\cap \ell_1$ and $p(\ell_2,\ell_1)\in \ell_{1,2}\cap \pi(\ell_2,\ell_1)$. Right: The sequence of 4 lines in $\reals^3$ is monotone, for we have that $\ell_1\succ \ell_2\succ\ell_3\succ \ell_4$, and the projections $\ell_1^*,\ldots,\ell^*_4$ form a 4-cap.}\label{Fig:Monotone}
 \end{center}
 \end{figure}

For any non-parallel pair of non-vertical lines $\ell_1,\ell_2$ in $\reals^3$, we use
$p(\ell_1,\ell_2)$ (resp., $p(\ell_2,\ell_1)$) to denote the intersection point between $\ell_1$ (resp., $\ell_2$) and the unique vertical line through $\ell_1$ and $\ell_2$, and we use $\pi(\ell_1,\ell_2)$ (resp., $\pi(\ell_2,\ell_1)$) to denote the $\ell_2$-parallel (resp., $\ell_1$-parallel) plane through $\ell_1$ (resp., $\ell_2$). See Figure \ref{Fig:Monotone} (left).


We say that $\ell_1$ lies {\it below} (resp., {\it above}) $\ell_2$, and denote $\ell_1\prec \ell_2$ (resp., $\ell_1\succ \ell_2$), if $p(\ell_1,\ell_2)$ lies below (resp., above) $p(\ell_2,\ell_1)$. Furthermore, we denote $\ell_1 \preceq \ell_2$ (resp., $\ell_1\succeq \ell_2$) if we have that either $\ell_1\prec \ell_2$ (resp., $\ell_1\succ \ell_2$) or $\ell_1\cap \ell_2\neq \emptyset$ (in which case $\pi(\ell_1,\ell_2)=\pi(\ell_2,\ell_1)$ holds).

\medskip
We say that a sequence $\ell_1,\ell_2,\ldots,\ell_n$ of $n\geq 3$ non-vertical lines in $\reals^3$ is {\it monotone} if the following two conditions are met (see Figure \ref{Fig:Monotone} (right)):

\begin{enumerate}
	\item The sequence $\ell^*_1,\ldots,\ell^*_n$ is either an $n$-cap or an $n$-cup. 
	\item We have that either $\ell_1\preceq \ell_2\preceq\ldots\preceq \ell_n$ or $\ell_1\succeq \ell_2\succ\ldots\succeq \ell_n$.
\end{enumerate}


\begin{theorem}\label{Lemma:Monotone} For any integer $n\geq 3$ there is a finite number $N_2=N_2(n)$ with the following property.

Any family $\K$ of $N_2$ strictly 2-overlapping convex sets in $\reals^3$ contains a sequence of $K_1,\ldots,K_n$ of $n$ sets that can be realized by a monotone sequence $\ell_1,\ldots,\ell_n$ of $n$ lines in $\reals^3$.
\end{theorem}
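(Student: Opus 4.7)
The plan is to combine Theorem \ref{Theorem:Realize} with a chord-lifting construction in vertical planes and a Ramsey-theoretic reduction to the $\preceq$-order on lines in $\reals^3$.

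First, set $m := R_2(n,3)$ and $N_2(n) := N_1(m)$. Starting from $\K$ of size $N_2(n)$, the projected family $\K^*$ is strictly 2-intersecting in $\reals^2$, so Theorem \ref{Theorem:Realize} furnishes (after relabeling) a sub-sequence $K_1^*,\ldots,K_m^*$ realized by an $m$-cap or $m$-cup of lines $\ell_1^*,\ldots,\ell_m^*$. For each $i$, let $q_{i,j} := \ell_i^* \cap \ell_j^*$ for $j\neq i$; by the cap/cup property these points are pairwise distinct along $\ell_i^*$, and each lies in $K_i^* \cap K_j^*$ by the planar realization.

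Next, I lift each $\ell_i^*$ to a 3D line $\ell_i$ as follows. The vertical plane $\pi_i := \hat{\ell_i^*}$ meets $K_i$ in a compact convex cross-section whose projection down to $\ell_i^*$ is a closed interval $[a_i,b_i]$ containing every $q_{i,j}$; in particular $a_i < b_i$. Pick any points $P_i^L, P_i^R \in K_i \cap \pi_i$ projecting to $a_i$ and $b_i$ respectively, and let $\ell_i$ be the non-vertical line through them inside $\pi_i$, whose vertical projection is therefore $\ell_i^*$. By convexity of $K_i$, the segment from $P_i^L$ to $P_i^R$ lies entirely in $K_i$; in particular, for every $j\neq i$ the point $p(\ell_i,\ell_j)$ (which sits above $q_{i,j}$) lies on this segment and hence in $K_i$. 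Therefore $q_{i,j} \in (K_i\cap \ell_i)^* \cap (K_j\cap \ell_j)^*$, so the sequence $K_1,\ldots,K_m$ is realized by $\ell_1,\ldots,\ell_m$. The lines $\ell_i$ are mutually non-parallel in $\reals^3$ because the projections $\ell_i^*$ have pairwise distinct slopes in a cap or cup, so the $\preceq$-order is well-defined on every pair.

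Third, I extract a $\preceq$-monotone sub-sequence using Ramsey's theorem. Color each pair $\{i,j\}$ with $i<j$ by one of three colors: ``$\prec$'' if $\ell_i \prec \ell_j$, ``$\succ$'' if $\ell_i \succ \ell_j$, or ``$=$'' if $\ell_i \cap \ell_j \neq \emptyset$. Theorem \ref{Theorem:Ramsey} with $k=2$ and $c=3$ then produces a monochromatic $n$-subset $i_1<\ldots<i_n$. In the ``$\prec$''-case this gives $\ell_{i_1} \preceq \ldots \preceq \ell_{i_n}$; in the ``$\succ$''-case $\ell_{i_1} \succeq \ldots \succeq \ell_{i_n}$; in the ``$=$''-case both orderings hold simultaneously. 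Condition 1 of monotonicity (that $\ell_{i_1}^*,\ldots,\ell_{i_n}^*$ still form an $n$-cap or $n$-cup) is inherited from $\ell_1^*,\ldots,\ell_m^*$, since both properties pass to ordered sub-sequences.

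The main obstacle is the chord-lifting step: exploiting convexity to obtain a single chord of $K_i\cap \pi_i$ that spans the entire $t$-range $[a_i,b_i]$ and hence simultaneously witnesses the realization constraint at every $q_{i,j}$, thereby avoiding any per-pair analysis. Once this observation is in hand, the remaining arguments --- invoking Theorem \ref{Theorem:Realize} to reduce to the planar case and Ramsey's theorem to align the 3D $\preceq$-order --- are routine.
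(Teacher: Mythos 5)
Your proposal is correct and follows essentially the same route as the paper: project, invoke Theorem \ref{Theorem:Realize} to get a cap/cup realization, lift each planar line via a chord of $K_i$ over the interval $K_i^*\cap \ell_i^*$ (which contains all intersection points $q_{i,j}$), and then apply Ramsey's theorem to the vertical order $\preceq$ to extract a monotone subsequence. The only cosmetic difference is that you use a three-coloring ($\prec$, $\succ$, intersecting) with $m=R_2(n,3)$ where the paper uses the two-coloring $\{\mu_i\succeq\mu_j\}$ versus its complement with $m=R_2(n,2)$; both are valid.
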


\begin{proof}
Let $m=R_2(n,2)$ and $N_2=N_1(m)$, where the quantity $N_1(\cdot)$ is defined in Theorem \ref{Theorem:Realize}. Let $\K$ be a strictly 2-overlapping family of $N_2$ sets in $\reals^3$.
Applying Theorem \ref{Theorem:Realize} to the projected strictly 2-intersecting family $\K^*=\{K^*\mid K\in \K\}$
 yields an $m$-sequence $F^*_1,\ldots,F^*_m$ which is realized by either an $m$-cap or an $m$-cup $\lambda_1,\ldots,\lambda_{m}$ of lines in $\reals^2$. For each $1\leq i\leq m$, we consider the interval $K^*_i\cap \lambda_i=a_ib_i$, raise vertical lines $\hat{a}_i$ and $\hat{b}_i$ in $\reals^3$ through the endpoints $a_i$ and $b_i$, respectively, and then ``lift" $\lambda_i$ back to a line $\mu_i$ in $\reals^3$ which passes through any pair of points $a'_i\in \hat{a}_i\cap F_i$ and $b'_i\in \hat{b}_i\cap F_i$. (We have that $a_i\neq b_i$ for all $1\leq i\leq m$, or else $\bigcap_{i=1}^m F_i^*$ would hold with $m\geq 3$.)
   
 Consider the 2-coloring of the edges in the complete graph $\left([m],{[m]\choose 2}\right)$, in which a pair of indices $\{i,j\}$, for $i<j$, is colored as {\it red} if $\mu_i\succeq \mu_j$, and it is colored as {\it blue} otherwise.
  Then, by Theorem \ref{Theorem:Ramsey}, and the choice of $m$, there must exist a ``monochromatic" subsequence ${i_1}<\ldots<i_{n}$ of $n$ indices, which yields a sequence $K_1=F_{i_1},\ldots,K_n=F_{i_n}$ of $n$ sets that is
realized in $\reals^3$ by the monotone subsequence $\ell_1=\mu_{i_1},\ldots,\ell_n=\mu_{i_n}$.
\end{proof}

\subsection{Separating families of lines and convex sets by planes in $\reals^3$}

\noindent{\bf Separating lines in $\reals^3$.} We say a skew pair of non-vertical lines $\ell_1$ and $\ell_2$ is {\it separated} by a non-vertical plane $H$ if the closed vertical segment $s(\ell_1,\ell_2):=p(\ell_1,\ell_2)p(\ell_2,\ell_1)$ is crossed by $H$.

\medskip
The following property is established in Section \ref{Sec:Separate}.

\begin{theorem}\label{Theorem:SeparateMany}
	For any monotone sequence $\ell_1,\ldots,\ell_n$ of $n$ lines in $\reals^3$, there must exist a plane that separates $\Theta(n^2)$ of the pairs $\ell_i,\ell_j$, for $1\leq i<j\leq n$.
\end{theorem}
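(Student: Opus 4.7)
Without loss of generality, assume $\ell_1\preceq\cdots\preceq\ell_n$ and that the projections $\ell_1^*,\ldots,\ell_n^*$ form an $n$-cap; the $n$-cup case is entirely symmetric. Set $m=\lfloor n/2\rfloor$ and consider the pencil
\[
\mathcal{H}=\{\pi(\ell_m,\ell_k):k\in[n]\setminus\{m\}\}
\]
of $n-1$ planes through the median line $\ell_m$. The plan is to show by an averaging argument that some plane in $\mathcal{H}$ separates $\Omega(n^2)$ pairs. Concretely, we lower-bound the total number of separation incidences $\sum_{i<j}g(i,j)$, where $g(i,j)$ is the number of planes in $\mathcal{H}$ that separate $\{\ell_i,\ell_j\}$, and then divide by $|\mathcal{H}|=n-1$.

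We distinguish two kinds of pairs. If $m\in\{i,j\}$ (an \emph{anchored} pair), then every plane in $\mathcal{H}$ contains $\ell_m$, so it meets the segment $s(\ell_i,\ell_j)$ at its endpoint on $\ell_m$; hence $g(i,j)=n-1$, and there are $n-1$ anchored pairs. For a \emph{straddling} pair $i<m<j$, I aim to show $g(i,j)=\Omega(n)$. To this end, note that as $k$ ranges over $[n]\setminus\{m\}$, the height of the plane $\pi(\ell_m,\ell_k)$ at the common projection $x_{ij}=\ell_i^*\cap\ell_j^*$ is a rational function of the direction of $\ell_k$; using the cap-induced ordering of the intersections $\{x_{mk}\}$ along $\ell_m^*$ together with the monotone lift inequalities $h_r(x_{rs})\le h_s(x_{rs})$, one shows that this height lies in the interval $(h_i(x_{ij}),h_j(x_{ij}))$ for at least a positive constant fraction of the values of $k$.

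Combining: there are $(m-1)(n-m)=\Theta(n^2)$ straddling pairs each contributing $\Omega(n)$, plus $n-1$ anchored pairs each contributing $n-1$, giving $\sum_{i<j}g(i,j)=\Omega(n^3)$. Dividing by $|\mathcal{H}|=n-1$, at least one plane $H\in\mathcal{H}$ separates $\Omega(n^2)$ pairs, which is the conclusion of the theorem.

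The main technical obstacle is the quantitative claim for straddling pairs: that each such pair is separated by $\Omega(n)$ of the planes in $\mathcal{H}$. In the model case where each $\ell_k$ lies at a constant height, the value of $\pi(\ell_m,\ell_k)$ at $x_{ij}$ equals $h_m(x_{ij})$, which sits inside every straddling window $(h_i(x_{ij}),h_j(x_{ij}))$ by monotonicity, making $g(i,j)=n-1$ trivial. In the tilted general case the same value is a nontrivial rational function of $k$, and one must combine the cap constraints on the projection geometry with the monotone lift to prove that this function does not concentrate outside $(h_i(x_{ij}),h_j(x_{ij}))$ for too many $k$. Pushing this uniform-covering estimate through is where the bulk of the proof's work will go; once that is secured, the averaging step above is immediate.
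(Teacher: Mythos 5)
Your proof has a genuine gap at exactly the point you flag yourself: the claim that every (or a constant fraction of) straddling pair $i<m<j$ is separated by $\Omega(n)$ planes of the pencil $\{\pi(\ell_m,\ell_k)\}_k$ is never established, and the entire $\Omega(n^3)$ count -- hence the whole averaging step -- rests on it. The ordering $\ell_i\succeq\ell_m\succeq\ell_j$ constrains relative heights only over the projected crossing points $\ell_i^*\cap\ell_m^*$ and $\ell_m^*\cap\ell_j^*$; the height of $\pi(\ell_m,\ell_k)$ over $x_{ij}=\ell_i^*\cap\ell_j^*$ is an extrapolation from $\ell_m$ along the direction of $\ell_k$, and nothing in the cap/monotonicity hypotheses forces that extrapolated value into the window $\bigl(h_i(x_{ij}),h_j(x_{ij})\bigr)$ for many $k$ -- the window can be made arbitrarily short while the vertical slopes of the $\ell_k$ spread the pencil heights over a wide range. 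Your "model case" of horizontal lines is precisely the configuration in which the difficulty disappears, so it offers no evidence for the general case.

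The paper's own argument is a strong indication that no such uniform pencil statement is available. It first proves a purely local fact (Lemma~\ref{Lemma:Separate}): among any $J=R_3(5,4)$ lines of a monotone sequence, \emph{some} quadruple $(i,j,k,l)$ has $\ell_i,\ell_j$ separated by $\pi(\ell_k,\ell_l)$. Even there, after Ramsey reduces to five lines all of whose triples fall into one of four types (according to how the middle line traverses the prism $Z_{i,k}$ between $\pi(\ell_i,\ell_k)$ and $\pi(\ell_k,\ell_i)$), the conclusion in the hardest case is only an either/or: either $\pi(\ell_3,\ell_5)$ separates $\ell_2,\ell_4$ or $\pi(\ell_3,\ell_4)$ separates $\ell_2,\ell_5$ -- one cannot say in advance which plane of the pencil through $\ell_3$ succeeds. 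The quadratic count is then obtained globally by applying this local lemma to all $\binom{n}{J}$ subsequences and pigeonholing twice (first on the label $(i,j,k,l)$, then on the $(J-2)$-suffix determining the plane). If you want to salvage your approach, you would need either to prove the uniform-covering estimate for a positive fraction of straddling pairs -- which I expect to be false in general -- or to replace the single-pencil averaging with a local-to-global scheme of the kind the paper uses.
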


\noindent{\bf Separating families of convex sets in $\reals^3$.} For any pair $A$ and $B$ of convex sets in $\reals^3$, let us denote 
$$
A\backsim B:=\conv \left(\left(A\cap \hat{B}\right)\cup \left(B\cap \hat{A}\right)\right).
$$
Notice that $A\backsim B$ can be described as the convex hull of the union of all the closed vertical segments $ab$ that connect a pair of points $a\in A$ and $b\in B$.

\medskip
\noindent{\bf Definition.} We say that convex sets $A,B$ in $\reals^3$ are {\it separated} by a plane $H$ if $H$ crosses the set $A\backsim B$. Notice that the two notions of separation coincide for skew pairs of non-parallel lines. 

\begin{lemma}\label{Lemma:VerticalHull} Let $A$ and $B$ be a pair of non-empty convex sets in $\reals^3$, and $H$ a plane.
Then $A$ and $B$ are separated by $H$ if and only if $H$ intersects\footnote{This includes containment if $H$ is vertical.}
 at least one closed vertical segment $ab$ with $a\in A$ and $b\in B$.
\end{lemma}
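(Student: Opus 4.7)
The plan is to introduce the auxiliary set
\[
W := \bigcup \{ab : a \in A,\ b \in B,\ a^* = b^*\}
\]
of all closed vertical segments with one endpoint in $A$ and one in $B$, and to reduce the lemma to the elementary fact that a connected set meets a plane if and only if its convex hull does. The two things to establish are that $A \backsim B = \conv(W)$ and that $W$ is connected.

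The identity $A \backsim B = \conv(W)$ is routine. Every vertical segment $ab \subseteq W$ with $a \in A$, $b \in B$, $a^* = b^*$ has $a \in A \cap \hat{B}$ and $b \in B \cap \hat{A}$, so $W \subseteq \conv\bigl((A \cap \hat{B}) \cup (B \cap \hat{A})\bigr) = A \backsim B$. Conversely, every point of $A \cap \hat{B}$ is an endpoint of some vertical segment in $W$ (a matching $b \in B$ exists because $a^* \in B^*$), so $(A \cap \hat{B}) \cup (B \cap \hat{A}) \subseteq W$ and taking convex hulls gives $A \backsim B \subseteq \conv(W)$.

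The main step is to show that $W$ is path-connected. The set $A \cap \hat{B}$ is the intersection of the convex set $A$ with the convex vertical cylinder $\hat{B}$, hence convex, hence connected. If it is empty then by symmetry $B \cap \hat{A}$ is also empty, so $W = A \backsim B = \emptyset$ and the lemma holds vacuously. Otherwise, by construction every point $x \in W$ lies on some vertical segment $ab \subseteq W$ whose endpoint $a$ belongs to $A \cap \hat{B}$; this gives a path inside $W$ from $x$ to the connected ``core'' $A \cap \hat{B}$, so $W$ is path-connected.

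To conclude, I would invoke the observation that for any connected $S \subseteq \reals^3$ and any plane $H$, $H \cap S = \emptyset$ forces $S$ to lie in one of the two open halfspaces determined by $H$, and therefore $\conv(S)$ to lie in the same halfspace. Applied to $W$ this yields $H \cap W \neq \emptyset \iff H \cap \conv(W) \neq \emptyset$, which is exactly the stated equivalence. I do not anticipate any serious obstacle; the one minor subtlety is that the argument is insensitive to whether $H$ is vertical, so the footnote's convention that a vertical $H$ containing a segment $ab$ still ``intersects'' it is automatically respected by the set-theoretic formulation $H \cap ab \neq \emptyset$.
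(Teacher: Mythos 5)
Your proposal is correct. It proves the same equivalence as the paper but via a slightly different mechanism: you establish that the union $W$ of all the vertical segments is path-connected (through the convex ``core'' $A\cap\hat{B}$) and that $A\backsim B=\conv(W)$, and then invoke the fact that a connected set is disjoint from a plane if and only if its convex hull is. The paper instead argues by contradiction: if $H$ meets $A\backsim B$ but misses every segment $ab$, then $H$ misses both convex sets $A\cap\hat{B}$ and $B\cap\hat{A}$, which must therefore lie in \emph{distinct} open halfspaces of $\reals^3\setminus H$ (otherwise their convex hull would avoid $H$), whence $H$ crosses \emph{every} vertical segment $ab$ --- a contradiction. Both arguments rest on the same elementary observation that a connected (in the paper's case, convex) set missed by a plane lies in a single open halfspace; yours packages it as a clean contrapositive at the cost of having to verify connectivity of $W$, while the paper's avoids that verification by working with the two convex endpoint sets separately. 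Your handling of the degenerate case $A\cap\hat{B}=\emptyset$ and of vertical planes $H$ is also sound.
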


\begin{proof}
	Clearly, if the plane $H$ intersects a segment $ab$ with $a\in A$ and $b\in B$ then it clearly intersects $A\backsim B$ (which encompasses {\it all} such segments). For the converse direction, suppose for a contradiction that $H$ intersects $A\backsim B$ yet it intersects none of the segments $ab$ with $a\in A$ and $b\in B$. In particular, $H$ can intersect neither of the sets $A\cap \hat{B}$ and $B\cap \hat{A}$ (as each point $a\in A\cap \hat{B}$ is an endpoint of at least one vertical segment $ab$ with $b\in B$, and vice versa). Since $H$ intersects $A\backsim B$, the sets $A\cap \hat{B}$ and $B\cap \hat{A}$ must lie in distinct half-spaces of $\reals^3\setminus H$. Thus, $H$ is non-vertical and crosses {\it every} vertical segment $ab$ with $a\in A$ and $b\in B$.
	\end{proof}

  Notice that if a sequence $K_1,\ldots,K_n$ of convex sets is realized by a line sequence $\ell_1,\ldots,\ell_n$, then any plane $H$ that separates a pair of lines $\ell_i,\ell_j$, for $1\leq i\neq j\leq n$, must also separate the respective sets $K_i$ and $K_j$ (as $H$ crosses the closed vertical segment between $p(\ell_i,\ell_j)\in K_i\cap \ell_i$ and $p(\ell_j,\ell_i)\in K_j\cap \ell_j$). However, the vice versa does not necessarily hold.

\bigskip
\noindent{\bf Definition.} For any finite family $\K=\{K_1,\ldots,K_n\}$ of convex sets in $\reals^3$, we use $\K^\backsim$ to denote the family 
$\{K_i\backsim K_j\mid 1\leq i<j\leq q\}$.

\section{Proof of Theorem \ref{Theorem:Main}}	\label{Sec:Main}

It can be assumed with no loss of generality that the family $\K$ in question consists of {\it compact} convex sets. Otherwise, each set $K_i$ can be replaced by a convex polytope $K'_i=\conv\left(\{x_{i,j}\mid j\in [n]\setminus \{i\}\}\right)$, where $x_{i,j}$ denotes an arbitrary point in $K_i\cap K_j$. Indeed, the ``shrunken" family $\K'=\{K'_1,\ldots,K'_n\}$ is also pairwise intersecting, and any line transversal to a fixed fraction of the sets in $\K'$ must also cross the same proportion of $\K$.

\begin{theorem}\label{Theorem:pq}
For any integer $q\geq 3$ there is a constant $p\geq q$ with the following property.
Let $\K$ be a family of pairwise intersecting convex sets in $\reals^3$. Then at least one of the following conditions holds for every sub-family $\A
\in {\K\choose p}$:
\begin{enumerate}
	\item some three sets $A,B,C\in \A$ can be crossed by a vertical line (i.e., $A^*\cap B^*\cap C^*\neq \emptyset$), or
	\item there exists a sequence $K_1,\ldots,K_q$ of $q$ distinct elements of $\A$ that can be realized by a monotone sequence $\ell_1,\ldots,\ell_q$ of $q$ lines and, furthermore, at least $c{q\choose 2}$ of the pairs $K_i, K_j$, for $1\leq i<j\leq q$, can be separated by a single plane $\pi$; here $c$ is a constant that does not depend on $q$.
\end{enumerate}
\end{theorem}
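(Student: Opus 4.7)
The plan is to derive Theorem~\ref{Theorem:pq} directly from the toolkit assembled in Section~\ref{Sec:Prelim}, viewing the two alternatives as a clean dichotomy driven by whether the projected family has triple intersections. Set $p := N_2(q)$, where $N_2(\cdot)$ is the function from Theorem~\ref{Lemma:Monotone}. Fix an arbitrary sub-family $\A \in \binom{\K}{p}$ and consider its vertical projection $\A^* = \{K^* \mid K \in \A\}$. Since $\A$ is pairwise intersecting, so is $\A^*$. If some triple in $\A^*$ has a common point, then condition (1) holds and we are done, so we may assume that no three sets of $\A^*$ share a common point; in other words, $\A^*$ is strictly $2$-intersecting, which means $\A$ is strictly $2$-overlapping in the sense of Section~\ref{Sec:Prelim}.

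At this point Theorem~\ref{Lemma:Monotone} applies directly: because $|\A| = p = N_2(q)$, the strictly $2$-overlapping family $\A$ contains a subsequence $K_1,\ldots,K_q$ of $q$ distinct sets that can be realized by a monotone sequence $\ell_1,\ldots,\ell_q$ of non-vertical, mutually non-parallel lines in $\reals^3$. This gives the realization part of condition (2) with no extra work.

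It then remains to exhibit a single plane separating a constant fraction of the $\binom{q}{2}$ pairs of realized sets. The plan is to invoke Theorem~\ref{Theorem:SeparateMany} on the monotone line sequence $\ell_1,\ldots,\ell_q$, producing a plane $\pi$ that separates $\Omega(q^2)$ of the pairs $\{\ell_i,\ell_j\}$ in the line-separation sense (i.e., $\pi$ crosses the vertical segment $s(\ell_i,\ell_j) = p(\ell_i,\ell_j)\,p(\ell_j,\ell_i)$). To convert line-separation into set-separation, recall from the remark preceding the definition of $\K^\backsim$ that whenever the sequence $K_1,\ldots,K_q$ is realized by $\ell_1,\ldots,\ell_q$, one has $p(\ell_i,\ell_j) \in K_i \cap \ell_i$ and $p(\ell_j,\ell_i) \in K_j \cap \ell_j$; hence the vertical segment $s(\ell_i,\ell_j)$ has one endpoint in $K_i$ and the other in $K_j$, and by Lemma~\ref{Lemma:VerticalHull} any plane crossing it automatically separates $K_i$ from $K_j$. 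Thus the same plane $\pi$ separates at least $c\binom{q}{2}$ of the pairs $K_i,K_j$, yielding condition (2).

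There is essentially no obstacle beyond correctly chaining the three prepared ingredients (Theorem~\ref{Lemma:Monotone}, Theorem~\ref{Theorem:SeparateMany}, and Lemma~\ref{Lemma:VerticalHull}); the only conceptual point one needs to be careful about is the dichotomy at the start, namely that failure of condition~(1) is precisely the strict $2$-overlapping hypothesis required by Theorem~\ref{Lemma:Monotone}, and that the monotone realization automatically pushes line-separation to set-separation. The constant $c$ in condition~(2) is inherited from Theorem~\ref{Theorem:SeparateMany} and is independent of $q$, as required.
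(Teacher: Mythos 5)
Your proposal is correct and follows essentially the same route as the paper: set $p=N_2(q)$, use the failure of condition (1) to conclude that $\A$ is strictly $2$-overlapping, apply Theorem~\ref{Lemma:Monotone} to extract a monotone realization, apply Theorem~\ref{Theorem:SeparateMany} to the realizing lines, and pass from line-separation to set-separation via the remark following Lemma~\ref{Lemma:VerticalHull}. No gaps.
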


\begin{proof}
Let $p=N_2(q)$, where $N_2(\cdot)$ is the quantity defined in Theorem \ref{Lemma:Monotone}. Let $\A$ be a subset of $p$ elements of $\K$.
Suppose that no 3 elements of $\A$ can be simultaneously crossed by a vertical line, so the family $\A$ is strictly $2$-overlapping. By Theorem \ref{Lemma:Monotone} (and the choice of $p$ and $q$, which satisfy the relation in the theorem), $\A$ must contain a sequence $K_1,\ldots,K_q$ of $q$ distinct elements that can be realized by a monotone sequence $\ell_1,\ldots,\ell_q$ of $q$ lines. By Theorem \ref{Theorem:SeparateMany}, there must exist a plane $\pi$ separating at least $cq^2$ pairs $\ell_i,\ell_j$ (for $1\leq i<j\leq q$) and, therefore, also $cq^2$ of the respective pairs $K_i,K_j$ of convex sets.
\end{proof}

At the center of our argument lies the following property whose proof is postponed to the end of this section.

\begin{lemma}\label{Lemma:3sets}
For any $c_0>0$, the followings property holds with any sufficiently large integer $q>0$. Let $\Q=\{K_1,\ldots,K_q\}$ be a strictly $2$-overlapping family of $q$ pairwise intersecting convex sets in $\reals^3$. Suppose that the sequence $K_1,\ldots,K_q$ is realized by a line sequence $\ell_1,\ldots,\ell_q$. Let $\pi$ be a plane separating at least $c_0{q\choose 2}$ pairs $\{K_i,K_j\}\in {\Q\choose 2}$.\footnote{Notice that $\pi$ is not required to separate the respective pairs $\ell_i$ and $\ell_j$ of lines.} Then there exists a line $\ell\subset \pi$ crossing at least $3$ elements of $\Q$.
\end{lemma}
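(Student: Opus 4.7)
My plan is to combine three ingredients: a convexity/pairwise-intersection argument forcing each separated pair to witness a cross-section, a double-counting step to concentrate these cross-sections on a linear-size subfamily, and the strict 2-overlap property together with the realization to force the lifted lines $\ell'_j:=\hat{\ell^*_j}\cap \pi\subset \pi$ to cross many of the cross-sections. A pigeonhole over $\{\ell'_j\}_{j\in [q]}$ then yields a single lifted line crossing at least three elements of $\Q$.

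First I would verify that each separated pair $(K_i,K_j)$ forces at least one of $X_i:=K_i\cap \pi$, $X_j:=K_j\cap \pi$ to be non-empty. By Lemma \ref{Lemma:VerticalHull}, the separation hypothesis furnishes a vertical segment $ab$ with $a\in K_i$, $b\in K_j$, crossed by $\pi$, so $a$ and $b$ lie on opposite sides of $\pi$. Any $c\in K_i\cap K_j$ lies on some side of $\pi$; combining $c$ with whichever of $a,b$ lies on the opposite side, convexity forces the corresponding set to intersect $\pi$. Orienting each separated pair toward a crossing set and writing $d_i$ for its out-degree, we have $\sum_i d_i\ge c_0\binom{q}{2}$, and a standard averaging step extracts an index set $I\subseteq [q]$ with $|I|=\Theta(q)$ and $d_i=\Theta(q)$ for every $i\in I$; in particular, $X_i\neq\emptyset$ for each $i\in I$.

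The geometric heart is the claim that for every oriented pair $(i\to j)$, the lifted line $\ell'_j$ meets $X_i$. Realization gives $v^*_{i,j}:=\ell^*_i\cap \ell^*_j \in s^*_i\cap s^*_j\subseteq K^*_i\cap K^*_j$, and strict 2-overlap forbids $v^*_{i,j}\in K^*_k$ for any $k\ne i,j$. Following the cell-containment sketch from the introduction, one shows that $X_i\cup X_j$ lies in the closure $\overline{C}$ of the cell $C$ of $\pi\setminus \bigcup_{k\ne i,j}\ell'_k$ containing the lifted vertex $v_{i,j}:=\hat{v^*_{i,j}}\cap \pi=\ell'_i\cap \ell'_j$: any segment from a point of $X_i$ to $v_{i,j}$ crossing some $\ell'_k$ projects to a segment inside $K^*_i$ whose crossing with $\ell^*_k$, combined with the realization data for $K_k$ and the projected convexity around $v^*_{i,j}$, would yield a projected triple intersection and contradict strict 2-overlap. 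A further convexity argument, exploiting that $K_i$ crosses $\pi$ and that the pairwise intersection $K_i\cap K_j$ propagates to a point of $X_i$ on $\ell'_j$ near $v_{i,j}$, then completes the claim.

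Given the claim, the number of line-cross-section incidences between $\{X_i\}_{i\in I}$ and $\{\ell'_j\}_{j\in [q]}$ is at least $\sum_{i\in I}d_i=\Omega(q^2)$; by pigeonhole some $\ell'_{j_0}$ meets $\Omega(q)$ cross-sections, and for $q$ sufficiently large depending only on $c_0$ this is at least $3$, so $\ell'_{j_0}\subset \pi$ crosses at least three elements of $\Q$. The main obstacle is the cell-containment argument in the third paragraph: because the separation hypothesis in Lemma \ref{Lemma:3sets} concerns convex sets rather than the realizing lines, the lifted vertex $v_{i,j}$ need not lie on the vertical segment $s(\ell_i,\ell_j)$ between $p(\ell_i,\ell_j)$ and $p(\ell_j,\ell_i)$ and hence need not belong to $K_i\cup K_j$; rigorously confining the cross-sections to the correct cell and upgrading this containment to an actual incidence between $\ell'_j$ and $X_i$ requires careful interplay between strict 2-overlap, the realization structure, and the convexity of the projections.
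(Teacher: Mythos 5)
Your overall architecture (force cross-sections $X_i=K_i\cap\pi$ from separated pairs, then find one projected line $\ell'_h\subset\pi$ incident to at least three of them) matches the paper's, and your second paragraph correctly establishes that each separated pair contributes a point $w_{i,j}\in(K_i\cup K_j)\cap\pi$ whose projection lies in $K_i^*\cap K_j^*$. But there is a genuine gap exactly where you flag it, and the claim you would need is not merely unproven --- as stated it is too strong. For a single oriented pair $(i\to j)$ there is no reason for $\ell'_j$ to meet $X_i$: all you know is that $X_i$ contains one point $w_{i,j}$ and that $v_{i,j}=\ell'_i\cap\ell'_j$ projects into $K_i^*\cap K_j^*$. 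Since $\pi$ is not assumed to separate the lines $\ell_i,\ell_j$ (only the sets), the vertex $v_{i,j}$ need not lie on the vertical segment between $p(\ell_i,\ell_j)\in K_i$ and $p(\ell_j,\ell_i)\in K_j$, so it need not belong to $K_i\cup K_j$, and $X_i$ can be a small neighbourhood of $w_{i,j}$ lying entirely off the line $\ell'_j$. A cell-containment statement ($X_i\cup X_j$ lies in the closure of the cell of $\pi\setminus\bigcup_{k\ne i,j}\ell'_k$ containing $v_{i,j}$) cannot by itself be upgraded to an incidence. Since your entire incidence count $\sum_{i\in I}d_i=\Omega(q^2)$ rests on this per-pair incidence, the argument does not close.

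The paper circumvents the per-pair claim with a different mechanism. For each $i$ it collects all vertices $v_{i,j}\in\ell'_i$ for which the associated point $w_{i,j}$ landed in $K_i$, takes the two \emph{extremal} such vertices $v_{i,a_i},v_{i,b_i}$ along $\ell'_i$, and uses convexity of $K_i\cap\pi$ to obtain a whole segment $w_{i,a_i}w_{i,b_i}\subseteq K_i\cap\pi$. It then shows that any line $\ell'_h$ whose vertex $v_{i,h}$ lies strictly between the two extremal vertices must cross that segment: otherwise both $w$-points lie on the same side of $\ell'_h$ while $v_{i,a_i},v_{i,b_i}$ straddle it, so one of the segments $v_{i,a_i}w_{i,a_i}$ or $v_{i,b_i}w_{i,b_i}$ (each projecting into $K_i^*$) crosses $\ell'_h$, and a three-line cell analysis in $\pi$ then produces a point of $K_i^*\cap K_{a_i}^*\cap K_h^*$, contradicting strict 2-overlap. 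Note that this certifies incidences only for the \emph{interior} vertices, which is why the paper counts $\sum_i(|V_i|-2)=|\Pi|-2q\ge c_0\binom{q}{2}-2q$ rather than $|\Pi|$; the loss of $2q$ is harmless for $q$ large. To salvage your write-up, replace the per-pair incidence claim with this two-point straddling argument; the rest of your counting then goes through essentially as in the paper.
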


\noindent{\bf Definition.} Let $d\geq 1$ be an integer. Following the notation of Alon and Kalai \cite{AlonKalai}, we say that a family $\F$ of $d$ convex sets in $\reals^d$ is {\it good} if we have that either $d=1$, or $\F$ cannot be crossed by a $(d-2)$-dimensional flat.\footnote{In the notation of Cappell {\it et al.} \cite{Cappell}, such a family is called {\it separated}.} 
Furthermore, a family of $d+1$ convex sets in $\reals^d$ is called {\it good} if all of its $d$-size subsets are good.

We say that a hyperplane $\pi$ is {\it tangent} to a convex set $K$ in $\reals^d$ if $K\cap \pi\neq \emptyset$ and $K$ lies in one of the closed half-spaces determined by $\pi$.
If the hyperplane $\pi$ is {\it oriented}, the two open halfspaces are pre-labelled as $\pi^-$ and $\pi^+$.
The following property was established by Cappell {\it et al.} \cite[Theorem 3]{Cappell}.

\begin{lemma}\label{Lemma:Tangents}
Any good family $\F$ of $d\geq 1$ compact convex sets in $\reals^d$ supports exactly $2^{d+1}$ distinct oriented hyperplanes that are tangent to each member of $\F$.
\end{lemma}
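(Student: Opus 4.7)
My plan is to count oriented tangent hyperplanes to $\F=\{K_1,\dots,K_d\}$ by parameterizing the space of oriented hyperplanes in $\reals^d$ as $(u,t)\in S^{d-1}\times\reals$, via $\pi=\{x:\langle x,u\rangle=t\}$ with $\pi^+=\{\langle x,u\rangle>t\}$, and splitting the count according to the \emph{side pattern} $\sigma\in\{-,+\}^d$ specifying which closed halfspace each $K_i$ occupies. For a fixed $\sigma$, tangency of $(u,t)$ to $K_i$ with $K_i\subset\overline{\pi^{\sigma_i}}$ translates, via the support function $h_{K_i}$, to $t=h_{K_i}(u)$ if $\sigma_i=-$, or $t=-h_{K_i}(-u)$ if $\sigma_i=+$. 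Eliminating $t$, the oriented common tangents realizing $\sigma$ are identified with the common zeros on $S^{d-1}$ of the $d-1$ continuous difference functions $\phi_i^\sigma(u):=t_i^{\sigma_i}(u)-t_1^{\sigma_1}(u)$, $i=2,\dots,d$. It therefore suffices to show that for each of the $2^d$ patterns, this common zero set has cardinality exactly $2$, which will yield the grand total $2\cdot 2^d=2^{d+1}$. The antipodal involution $(u,t)\mapsto(-u,-t)$ swaps $\sigma$ and $-\sigma$, providing a useful consistency check.

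Next I would use the good hypothesis to establish both transversality and topological invariance. If transversality failed at a common zero $u_0$, there would be a tangent direction along $S^{d-1}$ (together with a corresponding infinitesimal shift of $t$) along which the hyperplane $\pi(u_0,t_0)$ could be rotated to first order while preserving contact with every $K_i$; the axis of such a rotation is a $(d-2)$-flat meeting each of the $K_i$'s, contradicting goodness. Hence the common zero set of $\phi_2^\sigma,\dots,\phi_d^\sigma$ is a finite transverse intersection, and by the implicit function theorem its cardinality is locally constant as $\F$ varies through good configurations. I would then deform $\F$ to a canonical model: $d$ balls of sufficiently small radii $r_i>0$ centered at points $p_1,\dots,p_d\in\reals^d$ in general position. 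For $r_i$ small enough this configuration is manifestly good, and the tangency equations become the linear constraints $\langle p_i,u\rangle-t=\sigma_i r_i$; the $d\times(d+1)$ coefficient matrix has full rank (by affine independence of the $p_i$'s), so its affine solution set is a line in $\reals^{d+1}$ that meets the cylinder $\{|u|=1\}$ in exactly two points. Summing over the $2^d$ side patterns gives the claimed $2^{d+1}$ oriented common tangents.

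The principal obstacle is to make the invariance step airtight: one must connect an arbitrary good $\F$ to the ball model through a path of $d$-tuples on which either goodness persists throughout, or on which the exceptional ``bad'' configurations (admitting a $(d-2)$-flat transversal) form a tame locus that is crossed transversally without altering the count. I expect this to reduce to a dimension-counting argument: the bad locus is semi-algebraic of positive codimension in the space of $d$-tuples of compact convex bodies, so generic paths avoid it, and near a boundary crossing the colliding pair of tangent hyperplanes corresponds to a double root of $\phi_2^\sigma=\dots=\phi_d^\sigma=0$ that must split back into two transverse zeros on the other side. Should this analytic route prove cumbersome, an alternative is a direct mapping-degree computation on $S^{d-1}$ for the normalized difference map $u\mapsto(\phi_2^\sigma(u),\dots,\phi_d^\sigma(u))/\|(\phi_2^\sigma(u),\dots,\phi_d^\sigma(u))\|$, which again reduces the final verification to the explicit ball model.
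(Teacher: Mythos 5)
The paper does not prove this lemma at all: it is quoted verbatim from Cappell, Goodman, Pach, Pollack, Sharir and Wenger \cite[Theorem 3]{Cappell}, so there is no in-paper argument to compare yours against. Your decomposition is nonetheless the right one -- the paper even records the refined form of the theorem, namely that each of the $2^d$ side patterns $\F=\F^-\uplus\F^+$ is realized by exactly two oriented common tangents -- and your ball-model computation (a line in $(u,t)$-space passing near the origin, hence meeting $\{|u|=1\}$ in two points) is correct. The problem is that everything hinges on the invariance step, which you explicitly leave open, and both of the routes you propose for closing it are flawed.

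First, the claim that upon crossing the bad locus ``the colliding pair of tangent hyperplanes corresponds to a double root that must split back into two transverse zeros on the other side'' is false: take $d=2$ and move two disjoint disks until they overlap. At the moment of tangency the two inner common tangents collide, and on the far side they disappear entirely -- the count drops from $8$ to $4$ and the patterns $(-,+)$ and $(+,-)$ are realized by \emph{no} tangent line. So the count is genuinely not preserved across the bad locus, and you cannot dodge the issue by perturbing the path; you must actually prove that the good locus is path-connected (in a topology in which your zero count is locally constant) and stay inside it all the way to the ball model. Second, the fallback ``mapping-degree computation'' cannot certify the answer either: $(\phi_2^\sigma,\dots,\phi_d^\sigma)$ is a map $S^{d-1}\to\reals^{d-1}$, i.e.\ a section of a trivial rank-$(d-1)$ bundle over a $(d-1)$-manifold, so the signed sum of the local indices of its zeros is the Euler number $0$ no matter what; a signed count of $0$ is consistent with $0$, $2$, or $2k$ zeros and does not yield ``exactly $2$.'' (The normalized map lands in $S^{d-2}$, where no integer degree is available for $d\geq 4$.) Finally, two smaller but real issues: support functions of general compact convex bodies are only Lipschitz, so the implicit-function-theorem language needs to be replaced by a local topological index, and your transversality-from-goodness argument (contact points of an infinitesimally rotatable tangent hyperplane all lie on the $(d-2)$-dimensional rotation axis) needs to be stated for contact \emph{faces} rather than contact points. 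In short: correct skeleton and correct target decomposition, but the central step is missing and the two proposed repairs do not work as described; the actual proof in \cite{Cappell} handles the topology more carefully.
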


In fact, Cappell {\it et al.} \cite[Theorem 3]{Cappell} proved the following stronger statement: for any partition $\F=\F^-\uplus \F^+$, the exist two oriented common tangent hyperplanes $\pi_1$ and $\pi_2$ to $\F$ so that $\bigcup \F^-$ (resp., $\bigcup \F^+$) is contained in the closures of $\pi_1^-$ and $\pi_2^-$ (resp., the closures of $\pi_1^+$ and $\pi_2^+$).
Using Lemma \ref{Lemma:Tangents}, Alon and Kalai \cite[Lemma 4.3]{AlonKalai} derived the following property of good families of $d+1$ convex sets.

\begin{lemma}
Any good family $\F$ of $d+1$ convex sets in $\reals^d$ that can be crossed by a hyperplane, contains a $d$-size subset $\F'$ so that at least one of its $2^{d+1}$ oriented common tangent hyperplanes crosses the remaining element of $\F\setminus \F'$.
\end{lemma}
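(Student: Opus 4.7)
The plan is to deform the given transversal hyperplane continuously, within the space of oriented hyperplanes of $\reals^d$, until it becomes tangent to exactly $d$ members of $\F$ while still crossing the remaining one. Write $\F=\{K_0,\ldots,K_d\}$ and fix a hyperplane $\pi_0$ crossing every $K_i$. Let $U$ denote the open set of oriented hyperplanes that cross every $K_i$, and let $C$ be the connected component of $U$ containing $\pi_0$. Since each $K_i$ is compact, $\overline{C}$ is compact in this $d$-dimensional space, and its boundary $\partial C$ is naturally stratified by which subfamily of $\F$ a given boundary hyperplane is tangent to; the top-dimensional $(d-1)$-strata correspond to tangency with a single set.

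First, I would push $\pi_0$ in a generic direction within $C$ until it first reaches $\partial C$. At that instant, by genericity, the hyperplane is tangent to exactly one $K_{i_1}$ and transverse to the remaining $d$ sets. Next, I would walk along the codimension-$1$ stratum of hyperplanes tangent to $K_{i_1}$, maintaining transversality to the other sets, until a second tangency with some $K_{i_2}$ is forced. Iterating, after at most $d$ such transitions the hyperplane becomes tangent to $d$ members of $\F$, forming a $d$-subset $\F'$, while still meeting the leftover set $K\in\F\setminus\F'$. By Lemma \ref{Lemma:Tangents}, this hyperplane---with the orientation inherited from the deformation---is one of the $2^{d+1}$ oriented common tangents to $\F'$, which gives precisely the desired conclusion.

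The main obstacle is making the stratified deformation rigorous at degenerate transitions; in particular, one must rule out the walk getting stuck at a hyperplane tangent to all $d+1$ sets before the $d$-fold tangency with a transverse remaining set has been reached. Here the \emph{good} hypothesis enters critically: it guarantees, via Lemma \ref{Lemma:Tangents}, that each $d$-subset supports only finitely many (namely $2^{d+1}$) oriented common tangents, so the tangency strata have the expected codimensions and no pathological sticking can occur. A sufficiently generic choice of initial direction, combined with standard transversality arguments on the boundaries between strata, should ensure that the walk encounters exactly one new tangency at a time and terminates at an oriented common tangent to some $d$-subset that still crosses the leftover element of $\F$, as required.
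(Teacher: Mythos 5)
Your continuation strategy has a genuine gap at its core step: nothing forces the walk to pick up a new tangency once two or more tangencies have been accumulated. At the first stage the claim can be salvaged (though not by the reason you give): if \emph{every} oriented tangent hyperplane to $K_i$ met $K_j$, then comparing support functions would give $K_i\subseteq K_j$, and if this held for two further sets $K_j,K_{j'}$ then any line through $K_i$ would cross the triple $\{K_i,K_j,K_{j'}\}$, contradicting goodness; so the single-tangency stratum does exit the region of transversal hyperplanes. But from the second stage on, the $(d-k)$-dimensional stratum of hyperplanes tangent to $K_{i_1},\ldots,K_{i_k}$ from prescribed sides may well be entirely contained in the set of hyperplanes meeting the remaining members of $\F$ --- already for $d=3$ the common tangent planes to two bodies form (generically) a circle, and there is no evident reason why goodness prevents a third body from meeting every plane on that circle. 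Your appeal to Lemma \ref{Lemma:Tangents} does not help here: it only controls the zero-dimensional stratum of $d$-fold tangencies of good $d$-subsets and says nothing about the topology, dimension, or boundary behaviour of the intermediate strata; nor are the sets assumed smooth or strictly convex, so the ``generic direction plus transversality'' framework is not available without an approximation argument whose limit you would then have to control. In short, the termination of the walk is essentially equivalent to the content of the lemma and has not been established.

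For comparison, the route the paper relies on (Alon--Kalai, Lemma 4.3, via the partition form of the Cappell et al.\ theorem quoted just before the statement) is non-homotopical: pick $x_i\in K_i\cap H$ for a transversal hyperplane $H$, apply Radon's theorem to these $d+1$ points inside the $(d-1)$-dimensional flat $H$ to obtain a partition $A\uplus B$ with $\conv\{x_i\colon i\in A\}\cap\conv\{x_j\colon j\in B\}\neq\emptyset$, discard one element of one part to obtain a $d$-subset $\F'$ with an induced two-colouring, and invoke the two oriented common tangents of $\F'$ realizing that colouring; a short computation with the Radon point then shows that one of these two tangents must meet the discarded set. If you wish to keep a deformation-style proof, you would need to prove, for every $1\leq k<d$, that goodness forces the $k$-fold tangency stratum to leave the transversal region --- a family of statements that you neither formulate nor prove, and which does not follow from the finiteness assertion of Lemma \ref{Lemma:Tangents}.
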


\noindent{\bf Definition.} 
For any $0\leq k\leq d$, let $\varphi_{d,k}:\reals^d\rightarrow \reals^k$ denote the standard projection of $\reals^d$ to the copy of $\reals^k$ that is spanned by the first $k$ coordinate axes. (That is, we have that $x^*=\varphi_{d,d-1}(x)$ for all $x\in \reals^d$.)

We say that a hyperplane $\pi$ is {\it determined} by a family $\A$ of compact convex sets if there is a subset $\B\subset \K$ of $1\leq i\leq d$ elements so that 
\begin{enumerate}
	\item $\B'=\{\varphi_{d,i}(K)\mid K\in \B\}$ is a good family in $\reals^i$, and
	\item $\pi=\varphi_{d,i}^{-1}(\pi')$, where $\pi'$ is a common tangent hyperplane to $\B'$ within $\reals^i$.
\end{enumerate}

Notice that any family $\A$ of $d$ convex sets yields a collection $\HH(\A)$ of at most $\sum_{i=1}^d{d\choose i}\cdot 2^i\leq 3^d$ hyperplanes that are determined by $\A$ via its various subsets $\B$. The following stronger variant of Theorem \ref{theorem:FractionalHyperplanes} was implicitly established by Alon and Kalai \cite[Proof of Proposition 4.1]{AlonKalai}. For the sake of completeness, its proof is spelled out in Appendix \ref{App:FracDefined}.

\begin{lemma}\label{Lemma:DefinedTangent}
For any $0<\alpha\leq 1$, and any integer $d\geq 1$, there is a constant $\beta'_d(\alpha)$ with the following property.
Let $\K$ be a family of $n\geq d+1$ convex sets in $\reals^d$ that encompasses $\alpha{n\choose d+1}$ subsets $\A\in {\K\choose d+1}$, each crossed by a hyperplane. Then there is a hyperplane crossing $\beta'_d(\alpha)|\K|$ elements of $\K$ that is determined by a subset $\B\in {\K\choose d}$.
\end{lemma}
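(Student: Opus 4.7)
The plan is to follow the double-counting argument of Alon and Kalai \cite[Proof of Proposition 4.1]{AlonKalai}: for each transversal-admitting $(d+1)$-tuple I will extract a \emph{determined} hyperplane via the (unnamed) lemma immediately preceding the statement -- itself a consequence of Lemma \ref{Lemma:Tangents} -- and then apply pigeonhole over the at most $2^{d+1}\binom{n}{d}$ relevant determined hyperplanes. To begin, I would carry out a genericity reduction: by applying a generic projective transformation of $\reals^d$ (which preserves the set of hyperplane-transversal-admitting tuples up to boundary cases), I expect that at least $(\alpha/2)\binom{n}{d+1}$ of the $\alpha\binom{n}{d+1}$ transversal-admitting $(d+1)$-tuples $\A\in \binom{\K}{d+1}$ become \emph{good} in the sense preceding Lemma \ref{Lemma:Tangents}, i.e., every $d$-subset of $\A$ is free of a $(d-2)$-flat transversal.

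For each such good tuple $\A$, the preceding unnamed lemma supplies a $d$-subset $\B\subset \A$ and a common tangent hyperplane $\pi\in \HH(\B)$ (from among the $2^{d+1}$ guaranteed by Lemma \ref{Lemma:Tangents}) that crosses the remaining element of $\A\setminus \B$. Since $\pi$ is tangent to every member of $\B$, it in fact crosses all $d+1$ sets of $\A$, and is a hyperplane determined by a $d$-subset of $\K$ as required. I would then count triples $(\A,\B,\pi)$ with $\A$ a good transversal-admitting $(d+1)$-tuple, $\B\in \binom{\A}{d}$, and $\pi\in\HH(\B)$ crossing $\A\setminus \B$. On the one hand, at least $(\alpha/2)\binom{n}{d+1}$ such triples exist by the previous step. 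On the other hand, for each fixed pair $(\B,\pi)$ the triple count is at most $m_\pi - d$, where $m_\pi$ denotes the number of members of $\K$ crossed by $\pi$ (and the subtraction accounts for the fact that $\pi$ already crosses the $d$ sets of $\B$). Summing over the at most $2^{d+1}\binom{n}{d}$ pairs $(\B,\pi)$ yields
\[
\tfrac{\alpha}{2}\binom{n}{d+1} \;\leq\; 2^{d+1}\binom{n}{d}\cdot \max_\pi m_\pi,
\]
so some hyperplane determined by a $d$-subset of $\K$ crosses $\Omega(\alpha n/d)$ of its elements, giving $\beta'_d(\alpha)=\Omega(\alpha)$ for any fixed $d$.

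The main obstacle I anticipate is the genericity reduction in the opening paragraph: the perturbation must neither create nor destroy hyperplane transversals for more than a small fraction of the tuples, and I must verify that a constant fraction of the transversal-admitting tuples become good after the perturbation. For any residual degenerate tuples -- those whose $d$-subsets are still crossed by a $(d-2)$-flat -- I plan to exploit the more general form of the determined-hyperplane definition (which permits projection subsets of size $i<d$), running an analogous double-count in the projected space $\reals^{i}$ by induction on $d$. These degenerate tuples should then be absorbed into the main count at the cost of a dimension-dependent constant, which is acceptable since $\beta'_d$ is permitted to depend on $d$.
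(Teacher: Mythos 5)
Your double-counting step for good tuples and your pigeonhole over the at most $2^{d+1}\binom{n}{d}$ determined tangent hyperplanes match the paper's first case exactly, and your fallback for degenerate tuples (induct on $d$ in the projected space, using the size-$i<d$ clause of the determined-hyperplane definition) is precisely the paper's second case. The problem is the step you lead with: the genericity reduction via a generic projective transformation cannot work, and the paper does not attempt anything like it. Goodness of a $d$-subset means it admits no $(d-2)$-flat transversal; projective transformations carry $(d-2)$-flats to $(d-2)$-flats, so a bad family stays bad under any such transformation. Nor would perturbing the sets themselves help: admitting a $(d-2)$-flat transversal is an open, robust condition (e.g.\ $d$ convex bodies sharing a common ball are crossed by a line for $d=3$, stably), not a measure-zero degeneracy that a small perturbation removes. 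So you cannot expect a constant fraction of the transversal-admitting tuples to become good after any reduction of this kind.

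The fix is simply to drop the reduction and make your ``obstacle'' paragraph the actual argument, as the paper does: either at least $\tfrac{\alpha}{2}\binom{n}{d+1}$ of the transversal-admitting $(d+1)$-tuples are good, in which case your double count alone gives a determined hyperplane crossing $\Omega_d(\alpha n)$ sets; or at least $\tfrac{\alpha}{2}\binom{n}{d+1}$ are bad, in which case each contributes a $d$-subset crossed by a $(d-2)$-flat, yielding $\Omega(\alpha n^d)$ such $d$-subsets, and projecting to $\reals^{d-1}$ and invoking the inductive hypothesis (with base case $d=1$, where every family is good by convention) produces a $(d-2)$-plane that lifts to the required determined hyperplane. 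There is no need to ``absorb'' the degenerate tuples into the main count -- in each branch of the dichotomy one mechanism suffices on its own. With that restructuring your proof is the paper's proof.
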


To establish Theorem \ref{Theorem:Main}, we fix a suitably large integer constant $q\geq 3$, which will be determined in the sequel, and select $p\geq q$ in accordance with Theorem \ref{Theorem:pq}. 
It can be assumed with no loss of generality that $|\K|\geq p$ or, else, 
the theorem holds with any line crossing an element of $\K$.
If there exist at least $\frac{1}{2}{n\choose p}$ $p$-size subsets $\A\in {\K\choose p}$ so that each of them encompasses at least one triple $\{A,B,C\}$ with $A^*\cap B^*\cap C^*\neq \emptyset$, then the entire family $\K$ must encompass at least ${n\choose p}/{n\choose p-3}=\Omega(n^3)$ triples $\{A,B,C\}$ of this kind; hence, applying the Fractional Helly Theorem (Theorem \ref{theorem:Fractional}) to the family $\K^*=\{K^*\mid K^*\in \K\}$ yields a vertical line crossing $\Omega(n)$ elements of $\K$. 

Therefore, and since $\K$ is  pairwise intersecting, it can be assumed in what follows that at least $\frac{1}{2}{n\choose p}$ of the $p$-size subsets $\A\in {K\choose p}$ are strictly $2$-overlapping.
For each $p$-subset $\A\in {\K\choose p}$ of this kind, Theorem \ref{Theorem:pq} yields a subset $\Q=\{K_1,\ldots,K_q\}$ of $q$ distinct elements of $\A$ which can be realized by a set $\L=\{\ell_1,\ldots,\ell_q\}$ of $q$ lines, and a plane separating at least $c{q\choose 2}$ among the sets $K_{i,j}=K_i\backsim K_j$ of $\Q^\backsim$. Repeating this argument for the at least $\frac{1}{2}{n\choose p}$ available choices of $\A\in {\K\choose p}$ yields a collection $\Gamma(\K,q)\subseteq {\K\choose q}$ of ${n\choose p}/(2{n\choose p-q})=\Omega(n^q)$ families $\Q$ with the aforementioned properties, where the implicit constant of proportionality depends on $p$ and $q$. 

For each $q$-family $\Q\in \Gamma(\K,q)$, applying Lemma \ref{Lemma:DefinedTangent} to the induced family $\Q^\backsim$ yields a collection $\D_{\Q}=\{K_{i_1,i_2},K_{i_3,i_4},K_{i_5,i_6}\}$ of $3$ convex sets which determines a transversal plane $\pi_{\Q}$ to at least $\beta_3(c){q\choose 2}$ elements of $\Q^\backsim$. 
Hence, by the pigeonhole principle, there must exist a sub-family $\Q_0\in {\K\choose 6}$, a subset $\D_0\in {\Q_0^\backsim\choose 3}$, and a plane $\pi_0\in \HH(\D_0)$, so that $\D_\Q=\D_0$ and $\pi_{\Q}=\pi_0$ holds for $\Omega(n^{q-6})$ different choices $\Q\in \Gamma(\K,q)$ of the form $\Q=\Q_0\cup \S$, with $\S\in {\K\setminus \Q_0\choose q-6}$. Provided a suitably large choice of the constant $q$, this readily yields a collection $\Lambda_0(\K,q-6)$ of $\Theta(n^{q-6})$ subsets $\S\in {\K\setminus \Q_0\choose q-6}$ so that each of them meets the criteria of Lemma \ref{Lemma:3sets} with the plane $\pi_0$ (albeit with some smaller constant $c_0<\beta_3(c)$, as $\pi_0$ crosses at least $\beta_3(c){q\choose 2}-6q$ elements of $\S^\backsim$). Hence, given a sufficiently large choice of $q$, each of these families $\S\in \Lambda_0(\K,q-6)$ must contain a subset $\T\subseteq \S$ of $3$ elements that can be crossed by a single line within $\pi_0$. Since any such triple $\T\in {\K\choose 3}$ can arise from only $O(n^{q-9})$ distinct choices of $\S\in \Lambda_0(\K,q-6)$, the family $\K$ must contain $\Theta(n^3)$ triples $\T$ of this kind. Now Theorem \ref{Theorem:Main} follows by applying Theorem \ref{theorem:FractionalHyperplanes} to the 2-dimensional family $\{K\cap \pi_0\mid K\in \K\}$. $\Box$

\bigskip
\noindent{\bf Proof of Lemma \ref{Lemma:3sets}.} To each pair $\{K_i,K_j\}$, with $1\leq i<j\leq q$, we assign a point $y_{i,j}\in K_i\cap K_j$.
Let $\Pi$ denote the subset of all the pairs $\{K_i,K_j\}\in {\Q\choose 2}$ that are separated by $\pi$. According to Lemma \ref{Lemma:VerticalHull}, for each pair $\{K_i,K_j\}\in \Pi$ one can fix a closed vertical segment $a_{i,j}b_{i,j}$, with $a_{i,j}\in K_i$ and $b_{i,j}\in K_j$, which intersects $\pi$ at some point $x_{i,j}$.\footnote{Notice that it is nowhere assumed that $a_{i,j}$ and $b_{i,j}$ lie on, respectively, $\ell_i\cap K_i$ and $\ell_j\cap K_j$, or that $\ell_i$ and $\ell_j$ are separated by $\pi$.} Notice that the segments $y_{i,j}a_{i,j}\subset K_i$ and $y_{i,j}b_{i,j}\subset K_j$, whose vertical projections lie entirely in $K_i^*\cap K_j^*$,  comprise a path $\gamma_{i,j}$ crossing $\pi$ at a single point $w_{i,j}\in K_i\cup K_j$ (so that $w_{i,j}^*\in K_i^*\cap K_j^*$).

\medskip
At the center of our argument lies the 2-dimensional line family $\L'=\{\ell'_1,\ldots,\ell'_q\}$ within $\pi$, which is comprised of the respective vertical projections $\ell'_i$ of the lines $\ell_i\in \L$ onto the plane $\pi$.\footnote{Notice that, since $\K$ is realized by $\ell_1,\ldots,\ell_q$, and it is strictly 2-overlapping, no three lines of either set $\L',\L^*=\{\ell_1^*,\ldots,\ell_q^*\}$ can intersect at the same point, and no two of them can be parallel.} For each pair $\{\ell'_i,\ell'_j\}\in {\L'\choose 2}$, with $1\leq i<j\leq q$, we use $v_{i,j}$ to denote the intersection point $\ell'_i\cap \ell'_j$, whose vertical projection $\ell_i^*\cap \ell_j^*$ also lies in $K_i^*\cap K_j^*$.

With some abuse the notation, for any $1\leq i<j\leq q$ we refer to the point $w_{i,j}$ as either $w_{i,j}$ or $w_{j,i}$, and the same symmetrical notation applies to the points $v_{i,j}$.

For each $1\leq i\leq q$, let us denote
$$
W:=\{w_{i,j}\mid \{K_i,K_j\}\in \Pi\},
$$

$$
W_i:=W\cap K_i,
$$


and

$$
V_i=\{v_{i,j}\mid \{K_i,K_j\}\in \Pi,w_{i,j}\in W_i\}.
$$


Within each line $\ell'_i\in \L'$, 
 we consider the {\it open} interval $I_i=v_{i,a_i}v_{i,b_i}$ between the two extremal points of $V_i$, and note that there exist a total of at least 
 $$
 \sum_{i=1}^q \left(|V_i|-2\right)=\sum_{i=1}^q\left(|W_i|-2\right)=|\Pi|-2q\geq c_0{q\choose 2}-2q
 $$ 
 \noindent pairs $(I_i,\ell'_h)$ in which the interval $I_i$ is crossed by the line $\ell'_h\in \L'$, each time at the point $v_{i,h}$ (where $i\in [q]$ and $h\not\in \{a_i,b_i\}$). Hence,  
 choosing $q>10/c_0$ yields a line $\ell'_h\in \L'$ crossing at least $(c_0{q\choose 2}-2q)/q\geq 3$ distinct intervals $I_i$, each at the respective vertex $v_{i,h}\in V_i$, so that $i\in [q]\setminus \{h\}$ and $h\not\in \{a_i,b_i\}$. It suffices to show that for each of these intervals $I_i$, the line $\ell'_h\subset \pi$ must also cross the respective segment $w_{i,a_i}w_{i,b_i}\subset K_i\cap \pi$.

 To this end, let us fix an index $i\in [q]\setminus \{h\}$ so that $v_{i,h}\in I_i\cap V_i$ and $v_{i,h}\not\in \{v_{i,a_i},v_{i,b_i}\}$, and suppose for a contradiction that $\ell'_h$ misses the segment $w_{i,a_i}w_{i,b_i}\subset K_i\cap \pi$. Then at least one of the segments $v_{i,a_i}w_{i,a_i}$ or $v_{i,b_i}w_{i,b_i}$, let it be the former one, must cross the line $\ell'_h$; see Figure \ref{Fig:InsidePi}.
  We will show that $K^*_i\cap K_{a_i}^*\cap K_{h}^*\neq \emptyset$, which is contrary to the prior assumption that the family $\Q$ is strictly 2-overlapping.

       \begin{figure}
 \begin{center}
\includegraphics[scale=0.45]{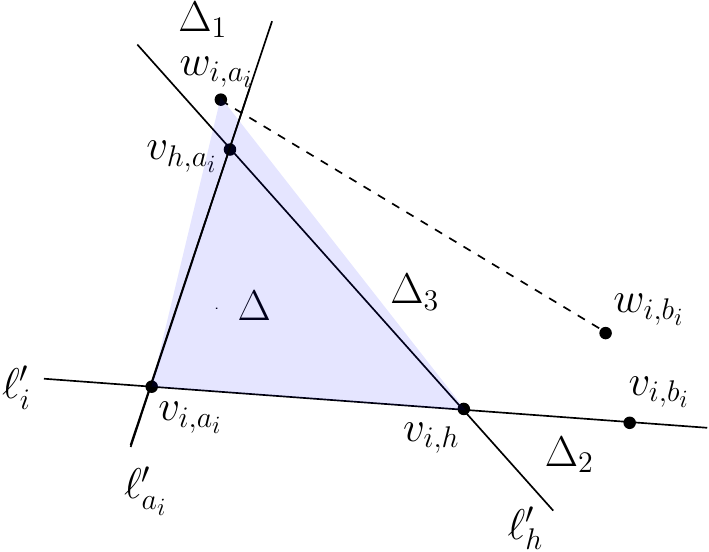}\hspace{1.5cm}\includegraphics[scale=0.45]{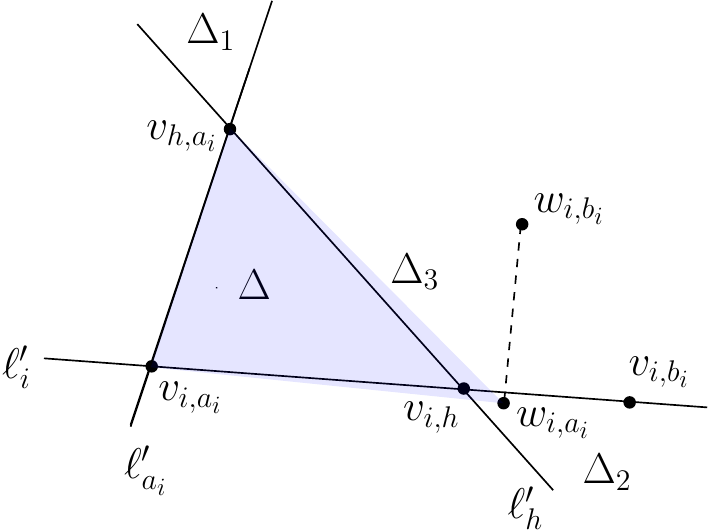}\\

\vspace{1cm}
\includegraphics[scale=0.45]{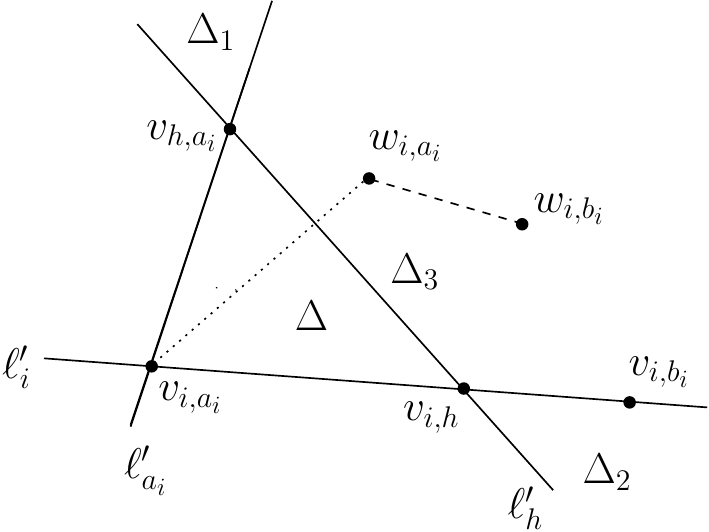}
 	\caption{\small Proof of Lemma \ref{Lemma:3sets} -- view within the plane $\pi$. Each line $\ell'_i$ is a vertical projection of $\ell_i$ to $\pi$.  If the segment $v_{i,a_i}w_{i,a_i}$ crosses $\ell'_h$, then we argue that $K^*_i\cap K^*_{a_i}\cap K^*_h\neq \emptyset$, which is contrary to the choice of $\Q$ as a strictly 2-overlapping family in $\reals^3$. Depicted are the three possible scenarios depending on the location of $w_{i,a_i}$ within $(\ell'_h)^+$.}\label{Fig:InsidePi}
 \end{center}
 \end{figure}

 Indeed, consider the arrangement of the three lines $\ell'_i$, $\ell'_{a_i}$ and $\ell'_h$, which we orient so that $\Delta=(\ell_i')^-\cap(\ell'_{a_i})^-\cap (\ell'_{h})^-$ comprises the only bounded cell -- a triangle. Since $v_{i,a_i}$ lies in $(\ell'_h)^-$, the point $w_{i,a_i}$ must lie in the other halfplane $\ell_h^+$. As such, it must lie in closure of one of the cells $\Delta_1:=(\ell'_{a_i})^+\cap (\ell'_h)^+\cap (\ell'_i)^-$, $\Delta_2:=(\ell'_{a_i})^-\cap (\ell'_h)^+\cap (\ell'_i)^+$, or $\Delta_3:=(\ell'_{a_i})^-\cap (\ell'_h)^+\cap (\ell'_i)^-$; see Figure \ref{Fig:InsidePi}.
 However, if $w_{i,a_i}$ lies in the closure of $\Delta_1$, then the point $v_{a_i,h}$ lies in the triangle $\tau=\conv(\Delta\cup \{w_{i,a_i}\})=\conv\left(v_{i,a_i}v_{i,h}\cup\{w_{i,a_i}\}\right)$ whose vertical projection $\tau^*$ lies in $K_i^*$, for both $w_{i,a_i}$ and $v_{i,a_i}v_{i,h}\subset a_ib_i\subset \ell'_i$ project to $K^*_i$. Hence, we have that $v^*_{a_i,h}\in K^*_i\cap K^*_{a_i}\cap K^*_h$.
    A symmetric argument applies if $w_{i,a_i}$ lies in the closure of $\Delta_2$ -- we argue that the point $v_{i,h}$ falls in the triangle $\conv(\Delta\cup \{w_{i,a_i}\})$, whose projection lies in $K^*_{a_i}$, so that $v_{i,h}^*\in K^*_{a_i}\cap K^*_i\cap K^*_h$. Lastly,  $w_{i,a_i}$ lies in $\Delta_2$, then the intersection $v_{i,a_i}w_{i,a_i}\cap v_{i,h}v_{a_i,h}$ again projects to a point of $K^*_i\cap K_{a_i}^*\cap K_{h}^*$. $\Box$


\section{Proof of Theorem \ref{Theorem:Realize}}\label{Sec:Realize2d}

As was explained in the beginning of Section \ref{Sec:Main}, it can be assumed with no loss of generality that the strictly 2-intersecting family $\K$ in question consists of compact convex sets (or, else, each set can be replaced by a convex polygon).

\medskip
Let us first establish the basic properties of the arrangements of three or four strictly 2-intersecting convex sets in $\reals^2$.

  \begin{figure}[htbp]
 \begin{center}
\includegraphics[scale=0.55]{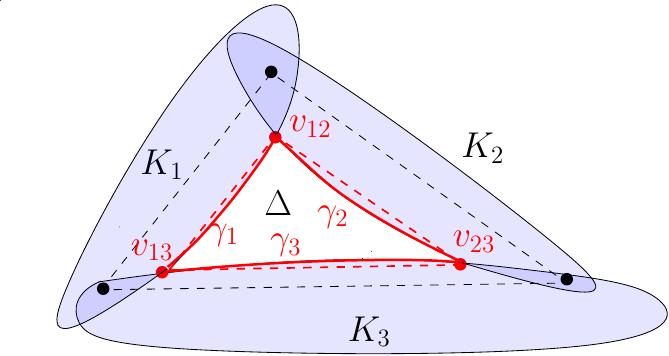}
 	\caption{\small Lemma \ref{Lemma:OrientSets}. The complement $\reals^2\setminus (K_1\cup K_2\cup K_3)$ of the strictly 2-intersecting family $\K=\{K_1,K_2,K_3\}$ contains exactly one bounded region, which is delimited by the arcs $\gamma_i\subset\partial K_i$.}\label{Fig:3Hole}
 \end{center}
 \end{figure}

\begin{lemma}[\cite{Orientation,Petruska}]\label{Lemma:OrientSets}
 Let $K_1,K_2,K_3$ be a strictly 2-intersecting ordered triple of compact convex sets in $\reals^2$. Then the following statements hold (see Figure \ref{Fig:3Hole}).
 
 \begin{enumerate}
 	\item[i.] The complement $\reals^2\setminus (K_1\cup K_2\cup K_3)$ can be represented as the union $\Delta\cup \Delta_\infty$ which consists of an unbounded region $\Delta_\infty$ and a bounded and simply connected hole $\Delta=\Delta(K_1,K_2,K_3)$ whose boundary is comprised of 3 arcs $\gamma_1\subseteq \partial K_1$, $\gamma_2\subseteq \partial K_2$, $\gamma_3\subseteq\partial K_3$. 
 	\item[ii.] For every choice of $x_{i,j}\in K_i\cap K_j$ with $1\leq i<j\leq 3$, the oriented triple $x_{1,2},x_{2,3},x_{1,3}$ has the same orientation (i.e., clockwise or counterclockwise), and $\conv(x_{1,2},x_{2,3},x_{1,3})$ contains $\Delta$.  
 	\item[iii.] The closure of $\conv(\Delta)$ is a triangle $\triangle v_{12}v_{23}v_{13}$, where $v_{ij}$ denotes the common endpoint of $\gamma_i$ and $\gamma_j$. Moreover, for each $1\leq i\leq 3$ the interior of the segment $v_{ij}v_{ik}$ (with $\{j,k\}=[3]\setminus \{i\}$) is contained in $K_i\setminus (K_j\cup K_k)$.
 
  \end{enumerate}
 \end{lemma}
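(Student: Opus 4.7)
My plan is to establish the three parts in sequence: (ii) will depend on (i), and (iii) on both.

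For part (i), I would apply the Nerve Theorem of Leray to the cover $\{K_1, K_2, K_3\}$: each $K_i$ and each pairwise intersection $K_i \cap K_j$ is convex (hence contractible) and nonempty, while the triple intersection is empty. The nerve is therefore the boundary of a $2$-simplex, i.e.\ $S^1$, and so $K_1 \cup K_2 \cup K_3 \simeq S^1$. Alexander duality in $S^2$ then yields that $\reals^2 \setminus (K_1 \cup K_2 \cup K_3)$ has exactly two connected components: the unbounded $\Delta_\infty$ and a single bounded hole $\Delta$. To describe $\partial \Delta$, note that for each $i$, $\partial K_i \cap K_j$ is a single connected arc $\beta_{ij} \subseteq \partial K_i$ (it is the ``lens boundary'' on $K_i$'s side), and $\beta_{ij}$ is disjoint from $\beta_{ik}$ by strict $2$-intersection. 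The Jordan curve $\partial K_i$ thus decomposes into $\beta_{ij}$, $\beta_{ik}$, and two complementary open arcs, exactly one of which lies on $\partial \Delta$; call it $\gamma_i$. Its endpoints are the two transverse intersection points $v_{ij}\in \partial K_i \cap \partial K_j$ and $v_{ik}\in \partial K_i \cap \partial K_k$ that are incident to $\partial \Delta$, and $\partial \Delta$ is the Jordan curve $\gamma_1 \cup \gamma_2 \cup \gamma_3$ closed up at these three vertices.

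For part (ii), the key step is that the triple $x_{12},x_{23},x_{13}$ is never collinear: if it were, one point (say $x_{13}$) would lie between the other two, forcing $x_{13} \in [x_{12},x_{23}]\subseteq K_2$ by convexity, and hence $x_{13}\in K_1\cap K_2\cap K_3=\emptyset$, a contradiction. The orientation of the triple is therefore a continuous, nonzero, $\{+,-\}$-valued function on the connected product $(K_1\cap K_2)\times(K_2\cap K_3)\times(K_1\cap K_3)$, and so is constant. For the containment $\Delta \subseteq \conv(x_{12},x_{23},x_{13})$, each edge of this triangle joins two points of a common $K_i$ and so lies in $K_i$; hence the triangle's boundary is contained in $K_1\cup K_2\cup K_3$ and disjoint from $\Delta$. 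Continuous deformation of the $x_{ij}$ through the connected domain preserves the winding number of this boundary around any fixed $p\in\Delta$; for the canonical witness $x_{ij}=v_{ij}$, each arc $\gamma_i$ bulges from the chord $v_{ij}v_{ik}\subseteq K_i$ into the triangle $T=\triangle v_{12}v_{23}v_{13}$ (by convexity of $K_i$), giving $\Delta\subseteq T$ and winding number $\pm 1$. Continuity then gives $p \in \conv(x_{12},x_{23},x_{13})$ for every choice.

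For part (iii), the identification $\overline{\conv(\Delta)}=T$ follows immediately from $\Delta\subseteq T$ (via (ii)) together with $v_{ij}\in\partial\Delta\subseteq\overline{\conv(\Delta)}$. The segment property is the subtle point. Convexity of $K_i$ gives $v_{ij}v_{ik}\subseteq K_i$. Suppose, for contradiction, that an interior point $p$ of this chord lies in $K_j$; then $p\in K_i\cap K_j$, and applying part (ii) with $x_{ij}=p$, $x_{ik}=v_{ik}$, $x_{jk}=v_{jk}$ yields $\Delta\subseteq T':=\triangle p\,v_{ik}\,v_{jk}$. But $v_{ij}$ lies on the line through $p$ and $v_{ik}$ on the side of $p$ opposite from $v_{ik}$ (since $p$ is strictly between $v_{ij}$ and $v_{ik}$), so $v_{ij}$ admits no convex-combination expression in the vertices of $T'$, and hence $v_{ij}\notin T'$. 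Since $v_{ij}\in\partial\Delta$, there is a sequence $p_n\in\Delta\subseteq T'$ with $p_n\to v_{ij}$; closedness of $T'$ forces $v_{ij}\in T'$, a contradiction. The symmetric argument with $j$ and $k$ swapped rules out any meeting of the open chord with $K_k$.

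The main obstacle is part (i): the six candidate arcs (two on each $\partial K_i$) must be shown to assemble into precisely the two boundary curves $\partial \Delta$ and $\partial \Delta_\infty$, with one arc from each $\partial K_i$ contributing to $\partial \Delta$. A mild general-position assumption (reducible to the general case by approximation) is convenient here. Given (i), parts (ii) and (iii) proceed cleanly via continuity/winding arguments and a single contradiction that re-applies (ii) at a non-canonical witness $p$.
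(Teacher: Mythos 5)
The paper does not prove Lemma \ref{Lemma:OrientSets} at all -- it is imported from \cite{Orientation,Petruska} -- so there is no internal proof to compare against; your proposal has to stand on its own. Its architecture is reasonable: the nerve-theorem/Alexander-duality computation correctly yields that $\reals^2\setminus(K_1\cup K_2\cup K_3)$ has exactly one bounded, simply connected component; the non-collinearity and sign-continuity argument for (ii) is correct; and the derivation of (iii) by re-applying (ii) at the witness $x_{ij}=p$ and observing $v_{ij}\notin\conv(p,v_{ik},v_{jk})$ is a clean and valid argument, \emph{granted} parts (i) and (ii) in full.

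The genuine gap is in part (i), and it is twofold. First, your supporting claim that $\partial K_i\cap K_j$ is a single connected arc is false: take $K_i$ a disk and $K_j$ a thin strip passing all the way through it; then $\partial K_i\cap K_j$ has two components, so $\partial K_i$ does not decompose into $\beta_{ij}$, $\beta_{ik}$ and two complementary arcs as you assert, and the subsequent ``exactly one of which lies on $\partial\Delta$'' has no basis. Second, the actual content of (i) beyond the component count -- that $\partial\Delta$ is assembled from exactly one arc of each $\partial K_i$, meeting at three vertices $v_{ij}$ -- is precisely what you defer in your closing paragraph as ``the main obstacle,'' so it is acknowledged rather than proved. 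This matters downstream: the vertices $v_{ij}$ and arcs $\gamma_i$ are the inputs to your base case for (ii) (the containment $\Delta\subseteq\triangle v_{12}v_{23}v_{13}$, which you justify only by saying each $\gamma_i$ ``bulges into the triangle'') and to all of (iii). A correct route is to argue directly about $\partial\Delta$: since $\Delta$ is a component of the complement, $\partial\Delta\subseteq\partial K_1\cup\partial K_2\cup\partial K_3$; show that $\partial\Delta\cap\partial K_i$ is nonempty and connected for each $i$ (using convexity of $K_i$ and the fact that $\Delta$ lies outside $K_i$, e.g.\ via the supporting/nearest-point projection onto $K_i$), and that the three resulting arcs can only meet pairwise at points of $\partial K_i\cap\partial K_j$ lying on $\overline{\Delta}$. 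Until that is done, parts (ii) and (iii) rest on an unproved foundation.
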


 Accordingly, we say that a strictly 2-intersecting triple $(K_1,K_2,K_3)$ of compact convex sets in $\reals^2$ has {\it clockwise} (resp., {\it counterclockwise}) orientation, and denote this by $\cwise(K_1,K_2,K_3)$
 (resp., $\counterwise(K_1,K_2,K_3)$), if any three points $x_{1,2}\in K_1\cap K_2,x_{2,3}\in K_2\cap K_3$, $x_{1,3}\in K_1\cap K_3$ appear in clockwise (resp., counterclockwise) order along the boundary of the triangle $\conv(x_{1,2},x_{2,3},x_{3,1})$.

 \begin{lemma}\label{Lemma:4sets}
Let $K_1,\ldots,K_4$ be a strictly 2-intersecting sequence of 4 compact convex sets in $\reals^2$, so that no two of them are mutually tangent, and suppose that all the ordered triples $K_{i_1},K_{i_2},K_{i_3}$, for $1\leq i_1<i_2<i_3\leq 4$, have the same orientation. 
Let $[4]=\{a,b\}\uplus \{c,d\}$ be a partition so that $\Delta_c=\Delta (K_a,K_b,K_c)$ and $\Delta_d=\Delta(K_a,K_b,K_d)$ are adjacent to the same vertex $v$ of $\partial (K_a\cup K_b)$, and overlap in its arbitrary small neighborhood. For $i\in \{c,d\}$, let $\gamma_i$ denote the arc of $\partial \Delta_i$ that is contained in $\partial K_i$, with endpoints $v_{ai}\in \partial(K_{a}\cap K_i)$ and $v_{bi}\in \partial(K_b\cap K_i)$. Furthermore, suppose that at least one of the arcs $\gamma_c$ or $\gamma_d$ contains a point of $K_d\cap K_c$. 
Then we have that $v_{ac}v_{bc}\cap v_{ad}v_{bd}\neq \emptyset$.
\end{lemma}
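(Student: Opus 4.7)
By the symmetry in the hypothesis, I would assume without loss of generality that $\gamma_c$ contains a point $p \in K_c \cap K_d$ (swap the roles of $c$ and $d$ otherwise). Fix coordinates so that $v$ lies at the origin and the ``exterior wedge'' of $\partial(K_a \cup K_b)$ at $v$ opens upward, and WLOG assume that the common orientation of every ordered triple is counter-clockwise (CCW).

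The main tool is Lemma 4.1(ii), which asserts that in a strictly 2-intersecting triple, any three witness points for the pairwise intersections share the common orientation of the triple. I would apply it to each of the four triples, using $v \in K_a \cap K_b$ and $p \in K_c \cap K_d$ as the canonical witnesses, obtaining:
\begin{itemize}
\item[(a)] From $(K_a,K_b,K_c)$ with $x_{ab}:=v$: the triple $(v, v_{bc}, v_{ac})$ is CCW.
\item[(b)] From $(K_a,K_b,K_d)$ with $x_{ab}:=v$: the triple $(v, v_{bd}, v_{ad})$ is CCW.
\item[(c)] From $(K_a,K_c,K_d)$ with $x_{cd}:=p$: the triple $(v_{ac}, p, v_{ad})$ is CCW.
\item[(d)] From $(K_b,K_c,K_d)$ with $x_{cd}:=p$: the triple $(v_{bc}, p, v_{bd})$ is CCW.
\end{itemize}
In addition, since both $v_{ac}, v_{ad} \in \partial K_a$ (and similarly on $\partial K_b$), the convex curves $\partial K_a$ and $\partial K_b$ impose an ordering on these two pairs of points emanating from $v$ into the exterior wedge.

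The plan is to combine (a)--(d) with the geometric fact that $p$ lies in the closed triangle $\triangle v\, v_{ac}\, v_{bc}$ (since $p \in \gamma_c \subseteq \partial \Delta_c \subseteq \overline{\operatorname{conv}(\Delta_c)}$ by Lemma 4.1(iii)) to establish the following two claims:
\begin{itemize}
\item[(I)] $v_{ad}$ and $v_{bd}$ lie strictly on opposite sides of the line through $v_{ac}, v_{bc}$.
\item[(II)] $v_{ac}$ and $v_{bc}$ lie strictly on opposite sides of the line through $v_{ad}, v_{bd}$.
\end{itemize}
Together, (I) and (II) are equivalent to the two open segments $v_{ac}v_{bc}$ and $v_{ad}v_{bd}$ crossing, completing the proof. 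For (I), I would compute the sign of $\det(v_{bc}-v_{ac},\, v_{ad}-v_{ac})$ by expressing it in terms of the four known CCW-determinants from (a)--(d) together with the convexity-induced sign of $\det(v_{ad}-v, v_{ac}-v)$; the position of $p$ on $\gamma_c$ (anchoring it inside $\triangle v\, v_{ac}\, v_{bc}$) is precisely what makes this sign expression collapse to a definite value. An analogous computation handles $v_{bd}$, giving the opposite sign and hence (I); claim (II) follows by a symmetric computation exchanging the roles of the pairs $(a,b)$ and $(c,d)$.

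The hardest step, I expect, will be the sign analysis underlying (I)--(II), and in particular verifying that in the ``nested'' configuration (where $v_{ad}, v_{bd}$ both lie closer to $v$ along $\partial K_a, \partial K_b$ than $v_{ac}, v_{bc}$ do) the chords are forced to cross rather than to be ``parallel.'' Here the role of the hypothesis $p \in \gamma_c$ (as opposed to $p$ being merely some point of $K_c \cap K_d$) is essential: conditions (c), (d) restrict the possible positions of the witness $p$ so tightly that, combined with the constraint $p \in \gamma_c$, the configuration becomes incompatible with non-crossing chords. Conversely, in the ``butterfly'' configuration the CCW conditions are already enough to place the four points in convex position with the chords as diagonals, making (I)--(II) immediate.
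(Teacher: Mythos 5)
Your plan has two genuine gaps, and the first one is fatal as stated. The assumption that all four triples in (a)--(d) are counter-clockwise is not a harmless normalization: the hypothesis of the lemma fixes the orientation only of the \emph{index-sorted} triples $K_{i_1},K_{i_2},K_{i_3}$ with $i_1<i_2<i_3$, and the orientation of $(K_a,K_b,K_c)$ written in that order differs from the sorted one by the sign of the permutation. The partition $[4]=\{a,b\}\uplus\{c,d\}$ is allowed to interleave: for $a=1,b=3,c=2,d=4$ the four triples $(K_a,K_b,K_c)$, $(K_a,K_b,K_d)$, $(K_a,K_c,K_d)$, $(K_b,K_c,K_d)$ acquire the sign pattern $(-,+,+,-)$ relative to the common sorted orientation, and no combination of swapping $a\leftrightarrow b$, swapping $c\leftrightarrow d$, or a global reflection makes the four signs equal. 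So your constraints (a)--(d) are simply wrong for some of the configurations the lemma must cover. This is precisely why the paper's proof splits into three cases according to how $\{a,b\}$ and $\{c,d\}$ sit inside $[4]$.

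The second gap is that the ``sign analysis'' you yourself identify as the hardest step --- deriving (I) and (II) from the orientation constraints --- is nowhere carried out, and it is not a formal consequence of those constraints alone: the four determinant signs together with the cyclic order of $v_{ac},v_{ad}$ along $\partial K_a$ and of $v_{bc},v_{bd}$ along $\partial K_b$ are still compatible with the nested, non-crossing configuration unless one pins down where the witness of $K_c\cap K_d$ actually lies. The paper does this by contradiction: assuming $v_{ac}v_{bc}\cap v_{ad}v_{bd}=\emptyset$ forces one chord (say that of $\Delta_d$) to lie outside the other, which in turn forces the witness $x_{cd}$ onto the outer arc $\gamma_d$; then, in each of the three index cases, two explicit point triples (e.g.\ $(v_{13},x_{34},v_{14})$ and $(v_{23},x_{34},v_{24})$ in the case $a=1,b=2,c=3,d=4$) are exhibited whose orientations must disagree, contradicting the hypothesis. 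Without an analogue of the ``witness lies on the outer arc'' step and an actual verification of the sign identities --- done separately for each interleaving pattern --- your claims (I) and (II) remain unproven.
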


 \begin{figure}[htb]
 \begin{center}
\includegraphics[scale=0.4]{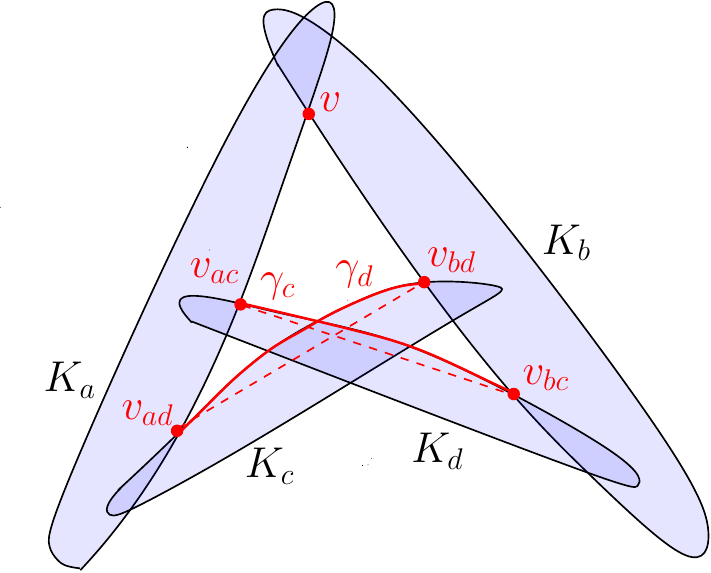}\hspace{2cm}\includegraphics[scale=0.4]{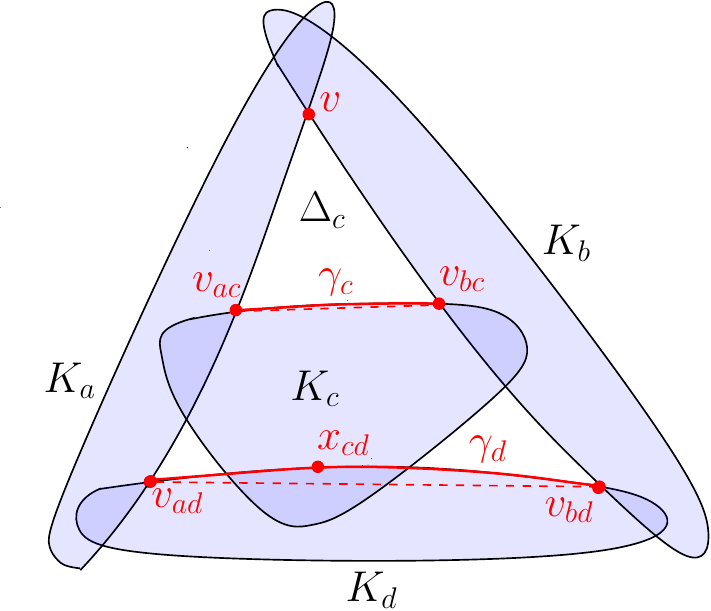}
 	\caption{\small Lemma \ref{Lemma:4sets}. Left: Both $\Delta_c=\Delta(K_a,K_b,K_c)$ and $\Delta_d=\Delta(K_a,K_b,K_d)$ are adjacent to the same vertex $v$ of $\partial (K_a\cup K_b)$, and overlap in its viccinity. Furthermore, at least one of the arcs $\gamma_c,\gamma_d$ contains a point of $K_c\cap K_d$. Then we have that $v_{ac}v_{bc}\cap v_{ad}v_{bd}\neq \emptyset$. Right: Proof of the lemma. If $v_{ac}v_{bc}\cap v_{ad}v_{bd}= \emptyset$ then some two sub-sequences among $K_1,\ldots,K_4$ must have opposite orientations.}\label{Fig:4sets}
 \end{center}
 \end{figure}

\begin{proof}

Assume for a contradiction that the claim does not hold, so that $v_{ac}v_{bc}\cap v_{ad}v_{bd}=\emptyset$. Refer to Figure \ref{Fig:4sets}.
Since both open regions $\Delta_c$ and $\Delta_d$ are adjacent to the same vertex $v\in \partial (K_a\cap K_b)$, in whose vicinity they overlap, it follows that the ordered triples $v,v_{bc},v_{ac}$ and $v,v_{bd},v_{ad}$ attain the same orientation, let it be clockwise. 
Since, $v_{ac}v_{bc}\cap v_{ad}v_{bd}=\emptyset$ (and the interiors of these segments cannot cross $K_a\cup K_b$ in view of Lemma \ref{Lemma:3sets} (iii)), it can be assumed with no loss of generality that $v,v_{ac}$ and $v_{ad}$ (resp., $v,v_{bc}$ and $v_{bd}$) appear in this clockwise (resp., counterclockwise) along $\partial K_a$ (resp., $\partial K_b$). Thus, $\gamma_d$ must contain a point $x_{cd}\in K_c\cap K_d$.
Up to circular and mirror symmetry, there are only three fundamental scenarios to consider:
\begin{enumerate}
	\item If $a=1,b=2,c=3$ and $d=4$, then Lemma \ref{Lemma:3sets} implies that the triples $(K_1,K_3,K_4)$ and $(K_2,K_3,K_4)$ have opposite orientations, which as can be seen by examining the oriented point triples $(v_{13},x_{34},v_{14})$ and $(v_{23},x_{34},v_{24})$.
	\item If $a=1,b=3,c=2$ and $d=4$, then the triples $(K_1,K_2,K_3)$ and $(K_1,K_2,K_4)$ again have opposite orientations (as is demonstrated by
	the ordered point triples $(v,v_{12},v_{23})$ and $(v_{12},x_{24},v_{14})$). 
	\item If $a=1,b=3,c=4$ and $d=2$ then, similar to the previous case, the triples $(K_1,K_3,K_4)$ and $(K_1,K_2,K_4)$ have opposite orientations.
	\end{enumerate}
\end{proof}

We say that a strictly 2-intersecting ordered triple $A,B,C$ of convex sets in $\reals^2$ is a {\it 3-cap} (resp., {\it 3-cup}) if every $3$ points $a\in A\cap B$, $b\in A\cap C$, and $c\in B\cap C$ form a 3-cap (resp., 3-cup) $a,b,c$. \footnote{In particular, this means that no two distinct intersections among $A\cap B,A\cap C,B\cap C$ can be crossed by a vertical line.}

Let  $n_1=c_1\cdot \max\{n,R_3(n+1,2)\}$ and $N_1=R_3(n_1,8)$, where $R_k(m,c)$ denotes the Ramsey number in Theorem \ref{Theorem:Ramsey} and $c_1$ is a suitably large constant to be fine-tuned in the sequel.
To establish Theorem \ref{Theorem:Realize}, let us fix a strictly 2-intersecting family $\K$ of $N_1\geq 3$ compact convex sets in $\reals^2$.
Consider an arbitrary ordering $K_1,\ldots,K_{N_1}$ of the sets of $\K$. 

Let $\T=\{K_i,K_j,K_k\}\in {\K\choose 3}$, for $i<j<k$. Since no three of the intersections $K_i\cap K_j$, $K_i\cap K_j$ and $K_{j}\cap K_k$ can be crossed by a single line (or, else, the 1-dimensional Helly variant would yield $\bigcap \T\neq \emptyset$), any such triple $\T$ must fall into exactly one of the following categories:
\begin{enumerate}
	\item[(i)] Some permutation of $\T$ yields a 3-cap.
	\item[(ii)] Some permutation of $\T$ is a 3-cup.
	\item[(iii)] Two of the sets $K_i\cap K_j$, $K_i\cap K_j$ and $K_{j}\cap K_k$, say $K_i\cap K_k$ and $K_i\cap K_j$, are crossed by a vertical line which lies to the left of $K_j\cap K_k$. 
	\item[(iv)]  Two of the sets $K_i\cap K_j$, $K_i\cap K_j$ and $K_{j}\cap K_k$, say $K_i\cap K_k$ and $K_i\cap K_j$, are crossed by a vertical line which lies to the right of $K_j\cap K_k$. 
\end{enumerate}
 Combined with the two possible orientations $\cwise(K_i,K_j,K_k)$ and $\counterwise(K_i,K_j,K_k)$, which derive from Lemma \ref{Lemma:3sets}, this yields an 8-coloring of all triples in ${\K\choose 3}$ (see Figure \ref{Fig:ColorTriples}). Since $N_1\geq R_3(n_1,8)$, Theorem \ref{Theorem:Ramsey} yields a subset $\K_1\subset \K$ of $n_1$ elements so that all triples $\T\in {\K_1\choose 3}$ have the same color. Up to symmetry, two possible scenarios arise:
 \begin{enumerate}
 	\item All of the triples $\T=\{K_{i},K_j,K_k\}\in {\K_1\choose 3}$, for $i<j<k$, are of type (i) and, in addition, attain the same orientation $\counterwise(K_i,K_j,K_k)$.
 	\item All of the triples $\T=\{K_{i},K_j,K_k\}\in {\K_1\choose 3}$, for $i<j<k$, are of type (iii) and, in addition, attain the same orientation $\cwise(K_i,K_j,K_k)$.
 \end{enumerate} 

  \begin{figure}[htb]
 \begin{center}
 	 	\includegraphics[scale=0.45]{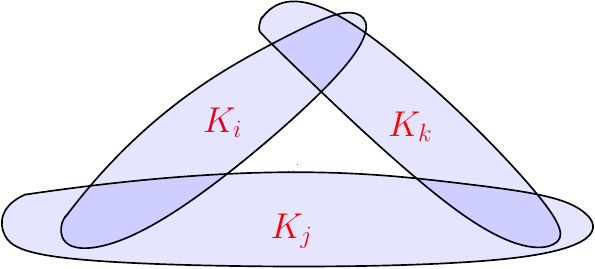}\hspace{1cm}\includegraphics[scale=0.45]{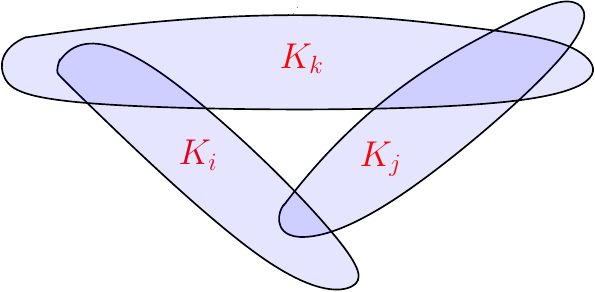}\hspace{1cm}\includegraphics[scale=0.45]{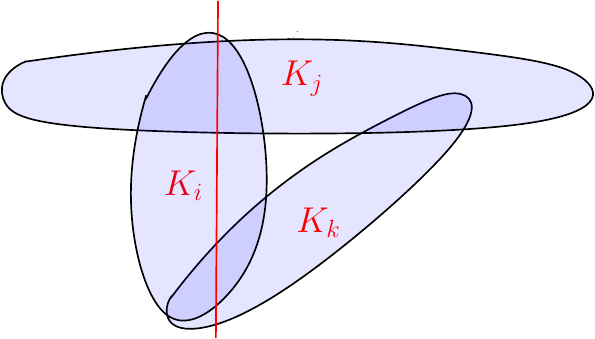}
 	 
 	\caption{\small Proof of Theorem \ref{Theorem:Realize} -- three ``colors" of triples $\{K_i,K_j,K_k\}$, with $i<j<k$, are illustrated. Left: The triple $\{K_i,K_j,K_k\}$ is of type (i), and its orientation is $\counterwise(K_i,K_j,K_k)$. Center: The triple $\{K_i,K_j,K_k\}$ is of type (ii), and its orientation is $\counterwise(K_i,K_j,K_k)$. Right: The triple $\{K_i,K_j,K_k\}$ is of type (iii), and its orientation is $\cwise(K_i,K_j,K_k)$.}\label{Fig:ColorTriples}
 \end{center}
 \end{figure}

\noindent {\bf Case 1.} By the pigeonhole principle, there must exist elements $A=K_{i},B=K_{k}$ in $\K_1$ that are involved in some $m\geq 3{n_1\choose 3}/{n_1\choose 2}=\Theta(n_1)$ 3-caps of the form $(A,K_j,B)$. A suitable choice of the constant $c_1>0$ will guarantee that $m\geq n$.

We fix $A$ and $B$, and consider the family $\F$ which is comprised of all the $m$ such choices of $K_j$. 

For each element $K_j\in \F$, we use $\gamma_j$ to denote the sub-arc $(\partial K_j)\cap (\partial \Delta(A,K_j,B))$. We then use $a_j$ and $b_j$ to denote the endpoints of $\gamma_j$ that lie on, respectively, $A$ and $B$, and $\ell_j$ to denote the line through $a_j$ and $b_j$. Lemma \ref{Lemma:3sets} (ii) implies that all of the regions $\Delta(A,K_j,B)$ are adjacent to the same vertex $v$ of $\partial(A\cup B)$, and overlap in the vicinity of $v$. (This is because both $\gamma_j$ and $a_jb_j$ are crossed by the downward ray through $v$ that leaves $A\cap B$.) 

We re-arrange the sets $K_j\in \F$ in the decreasing order of the slopes of their respective lines $\ell_j$, which yields the sequence $F_1,\ldots,F_{m}$. With nominal abuse of notation, we also use $\gamma_j$ to denote the arc of $\partial \Delta(A,F_j,B)$ that is ``supported" by $F_j$, and whose endpoints $a_j$ and $b_j$ lie on, respectively,  $\partial A$ and $\partial B$, and accordingly keep using $\ell_j$ to denote the line through $a_j$ and $b_j$. See Figure \ref{Fig:capcap} (left).

\begin{claim}\label{Claim:Cross}
	Let $1\leq j\neq j'\leq m$. Then the segments $a_jb_j\subset \ell_j\cap F_j$ and $a_{j'}b_{j'}\subset \ell_{j'}\cap F_{j'}$ cross.
\end{claim}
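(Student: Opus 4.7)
My plan is to deduce the claim as a direct application of Lemma \ref{Lemma:4sets} to the 4-element sub-family $\{A,B,F_j,F_{j'}\}\subseteq \K_1$, with partition $\{K_a,K_b\}=\{A,B\}$ and $\{K_c,K_d\}=\{F_j,F_{j'}\}$ in the lemma's notation. The conclusion $v_{a,c}v_{b,c}\cap v_{a,d}v_{b,d}\neq\emptyset$ translates precisely into the required crossing of $a_jb_j$ and $a_{j'}b_{j'}$, so the proof reduces to verifying the lemma's four hypotheses.

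Three of those hypotheses are immediate from the setup. Strict 2-intersection of the 4-tuple is inherited from $\K$. The common $\counterwise$ orientation of every ordered triple is exactly the monochromaticity supplied by Theorem \ref{Theorem:Ramsey} when $\K_1$ was extracted in Case 1. And the common-vertex-and-overlap condition for $\Delta(A,B,F_j)$ and $\Delta(A,B,F_{j'})$ is precisely what was noted before the statement of the claim, namely that every hole $\Delta(A,F_j,B)$ is adjacent to the same vertex $v\in \partial(A\cup B)$ and overlaps each of the others in arbitrarily small neighborhoods of $v$. Note that the slope ordering of $\ell_1,\ldots,\ell_m$ plays no role at this stage; it will only enter later, when a cap or cup of lines is actually assembled from the $F_j$'s.

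The remaining, and main, hypothesis to verify is that at least one of the arcs $\gamma_j$ or $\gamma_{j'}$ contains a point of $F_j\cap F_{j'}$. I plan to argue this by a connectivity argument. First, $F_j\cap F_{j'}$ is non-empty (pairwise intersection of $\K$) and entirely disjoint from $A\cup B$, since any point in $A\cap F_j\cap F_{j'}$ or $B\cap F_j\cap F_{j'}$ would be a forbidden three-way intersection. Since $A\cup B$ is simply connected, its complement is path-connected, so I may fix $q\in \Delta(A,B,F_j)\cap \Delta(A,B,F_{j'})$ and $p\in F_j\cap F_{j'}$ and join them by a path $\sigma$ that never enters $A\cup B$. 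The crucial observation is that along such a path, the only way to exit the hole $\Delta(A,B,F_j)$ (respectively $\Delta(A,B,F_{j'})$) is to cross the arc $\gamma_j$ (resp.\ $\gamma_{j'}$) into $F_j$ (resp.\ $F_{j'}$), because the rest of the hole's boundary lies on $\partial A\cup\partial B$, which is off-limits to $\sigma$. Matching the first such crossings along $\sigma$ therefore produces a point that lies on one of the two arcs and, at the same time, inside the other set $F_{j'}$ or $F_j$, as demanded by the hypothesis.

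The main technical subtlety lies in this last matching step: in principle $\sigma$ could oscillate in and out of $F_j$ between its first crossing of $\gamma_j$ and its first crossing of $\gamma_{j'}$, complicating the extraction of a single arc point in $F_j\cap F_{j'}$. I expect to resolve this either by choosing $\sigma$ to be monotone (for instance, a straight segment, or a concatenation of straight pieces lying in the exterior of $A\cup B$) so that the first crossings of $\gamma_j$ and $\gamma_{j'}$ are unambiguously ordered, or by a short topological argument exploiting the convexity of $F_j$ and $F_{j'}$ to force the segment between the two crossings to remain inside whichever of the two sets was entered first. Once the arc hypothesis is established, Lemma \ref{Lemma:4sets} immediately delivers $a_jb_j\cap a_{j'}b_{j'}\neq\emptyset$, completing the proof of the claim.
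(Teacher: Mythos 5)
Your overall plan---verify the hypotheses of Lemma \ref{Lemma:4sets} and apply it in the forward direction---differs in logical structure from the paper, and the difference is where the gap lies. The paper never proves that one of the arcs $\gamma_j,\gamma_{j'}$ contains a point of $F_j\cap F_{j'}$; it assumes the segments do \emph{not} cross, uses Lemma \ref{Lemma:4sets} in the \emph{contrapositive} to conclude that neither arc meets the other set, and then derives a contradiction from that combined configuration by exhibiting a forbidden 3-cup among $(B,F_{j'},F_j)$ or $(F_j,F_{j'},A)$. That last step leans essentially on the monochromatic 3-cap property of every triple in $\K_1$ --- precisely the ingredient you set aside when you assert that only the first three hypotheses of Lemma \ref{Lemma:4sets} are ``immediate from the setup'' and that the cap structure ``plays no role at this stage.'' There is no indication in the paper (and I see no proof) that the arc hypothesis holds unconditionally for strictly 2-intersecting, commonly oriented quadruples with overlapping holes; the author included it as a separate hypothesis of Lemma \ref{Lemma:4sets} and invokes the lemma only contrapositively, both here and in Case 2 of Theorem \ref{Theorem:Realize}.

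Your connectivity argument for the arc hypothesis does not close this gap. The first-exit analysis is fine as far as it goes (a path in $\reals^2\setminus(A\cup B)$ can only leave $\Delta(A,B,F_j)$ through $\gamma_j$), but the matching step you flag is the whole difficulty, and neither proposed repair works. A straight segment from $q$ (just below $v$) to $p\in F_j\cap F_{j'}$ has no reason to avoid $A\cup B$ --- if it clips $A$ or $B$ the first exit from the hole need not occur on $\gamma_j$ at all, so the convexity argument (``once the segment enters $F_j$ it stays in $F_j$'') cannot even be started; and once you pass to a concatenation of straight pieces to stay in the exterior of $A\cup B$, that convexity argument breaks at the junctions. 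Any correct direct proof of the arc hypothesis would have to control \emph{where} $F_j\cap F_{j'}$ can sit relative to the corridor below $v$ (e.g., rule out its lying underneath or beyond $A$ or $B$), and that control comes exactly from the cap type of the triples $\{A,F_j,F_{j'}\}$, $\{F_j,F_{j'},B\}$ --- the information your argument never uses. As written, the proposal would apply verbatim to arbitrary strictly 2-intersecting families with a common orientation, for which the realizability conclusion can fail (cf.\ Figure \ref{Figure:realize}, right), so some use of the monochromatic cap structure is unavoidable.
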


 \begin{figure}[htb]
 \begin{center}
 	 	\includegraphics[scale=0.4]{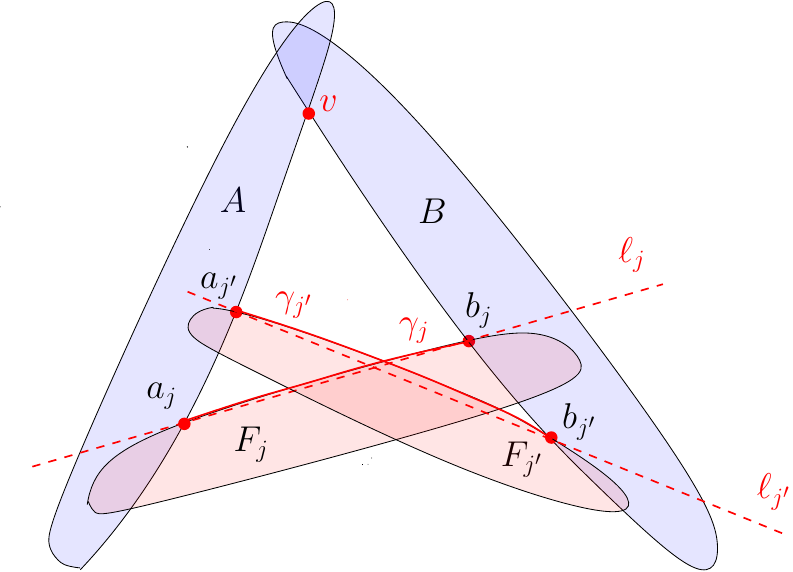}\hspace{1cm}\includegraphics[scale=0.4]{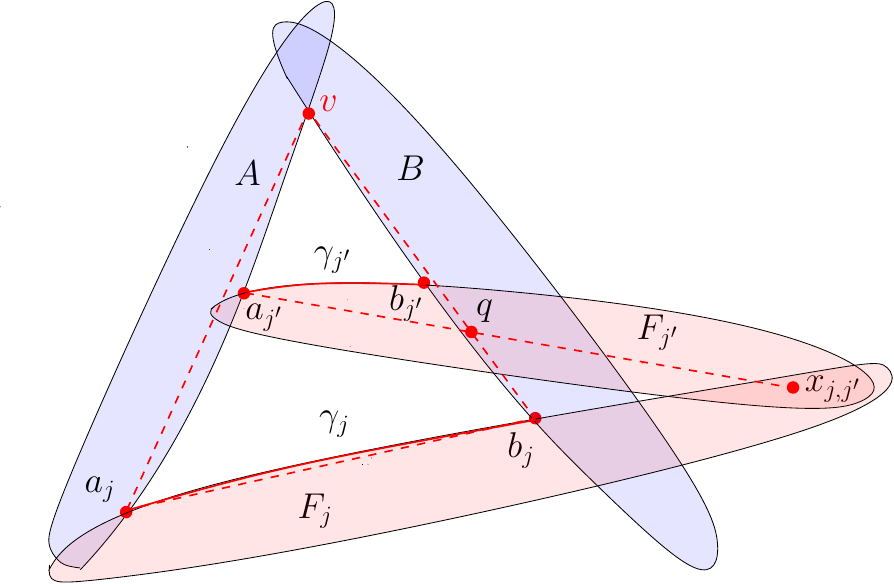}
 	 
 	\caption{\small Proof of Theorem \ref{Theorem:Realize} -- case 1. Left: Claim \ref{Claim:Cross} -- for any $j<j'$, the segments $a_jb_j\subset \ell_j\cap F_j$ and $a_{j'}b_{j'}\subset \ell_{j'}\cap F_{j'}$ cross. Right: Proof of Claim \ref{Claim:Cross}. If $a_jb_j\cap a_{j'}b_{j'}=\emptyset$, and there is a point $x_{j,j'}\in F_j\cap F_{j'}$ so that $a_{j'}x_{j,j'}$ crosses $vb_j$. Hence, $B,F_{j'}$ and $F_j$ form a forbidden cup.}\label{Fig:capcap}
 \end{center}
 \end{figure}

\begin{proof}[Proof of Claim \ref{Claim:Cross}]
Assume for a contradiction that $a_jb_j\cap a_{j'}b_{j'}=\emptyset$ so, by Lemma \ref{Lemma:4sets}, neither of the arcs $\gamma_j,\gamma_{j'}$ can contain a point of $F_j\cap F_{j'}$.
Fix an intersection point $x_{j,j'}\in F_j\cap F_{j'}$.   It can be assumed, with no loss of generality, that both $\gamma_{j'}$ and $a_{j'}b_{j'}$ lie within the cap triangle $\triangle a_jvb_j$, and above $\gamma_j$, whereas $x_{j,j'}$ lies outside $\triangle a_jvb_j$. Furthermore, neither of the segments $a_{j'}x_{j,j'}$ and $b_{j'}x_{j,j'}$ can cross the bottom edge $a_jb_j$ (or, else, this segment would also cross the arc $\gamma_j$ at a point of $F_j\cap F_{j'}$). It, thereby, follows that that either $a_{j'}x_{j,j'}$ crosses $vb_j$ or $b_{j'}x_{j,j'}$ crosses $va_j$. Let us assume with no loss of generality that the former scenario occurs, and notice that (i) the intersection point $q=vb_j\cap b_{j'}x_{j,j'}$ belongs to $F_{j'}\cap F_k$, and (ii) the point $b_j$ must lie below the segment $qx_{j,j'}$ (by the convexity of $F_j$). Hence, the three points $q\in F_{j'}\cap B,b_j\in F_j\cap B$, and $x_{j,j'}\in F_j\cap F_{j'}$ form a 3-cup. However, then the ordered triple $(B,F_{j'},F_j)$ too must form a 3-cup; see Figure \ref{Fig:capcap} (right). A fully symmetric argument yields the 3-cup $(F_{j},F_{j'},A)$ if $b_{j'}x_{j,j'}$ crosses $va_j$. Either cup is contrary to the ``monochromatic" choice of $\F\subset \K_1$.
\end{proof}

Now let $1\leq j<h<l\leq m$. Then the three points $\ell_j\cap \ell_h\in F_j\cap F_h$, $\ell_{j}\cap \ell_l\in F_j\cap F_l$ and $\ell_h\cap \ell_l\in F_h\cap F_l$ must form a 3-cap rather than a 3-cup (or, else, no permutation of $\{F_j,F_h,F_l\}$ would yield a 3-cap). Thus, the sequence $F_1,\ldots,F_m$ is realized by the line sequence $\ell_1,\ldots,\ell_m$, which is an $m$-cap.

\medskip
\noindent {\bf Case 2.} Since all ${n_1\choose 3}$ of the triples $\{K_i,K_j,K_k\}\subset \K_1$ are of type (iii), there must exist a set $A=K_i$ that is involved in $t=\Omega(n_1^2)$ ordered triples $(A,K_j,K_k)$ with the following property: the intersections $K_i\cap K_j$ and $K_i\cap K_k$ are crossed by a vertical line $\ell$, whereas $K_j\cap K_k$ lies to the right of $\ell$. By vertically projecting all the intersections $K_j\cap A$, with $K_j\in \K_1\setminus \{A\}$,  and invoking the 1-dimensional fractional Helly's theorem for the resulting intervals (so that any intersecting pair of intervals correspond to pairs of sets $A\cap K_j$ and $A\cap K_k$ that can be simultaneously crossed by a vertical line), we obtain a vertical line $\ell$ crossing some $m=\Omega(n_1)$ intersections  $A\cap K_j$, with $\K_1\setminus \{A\}$. Once again, a suitable choice of $c_1$ yields $m\geq R_3(n+1,2)$. We use $\F$ to denote the family comprised of the $m$ respective convex sets $K_j$. For each $K_j\in \F$, we fix an arbitrary point in $K_j\cap A\cap \ell$, and relabel the elements of $\F$ as $F_1,\ldots,F_m$ in the ascending order of their respective points $a_1\in F_1\cap A\cap \ell,\ldots,a_m\in F_m\cap A\cap \ell$.\footnote{Since $\K_1$ is strictly 2-intersecting, no two of these intervals $F_j\cap A\cap \ell$ can overlap.} 

Notice that for all $1\leq j<k\leq m$ the intersection $F_j\cap F_k$ lies entirely to the right of $\ell$ (as $\{A,F_j,F_k\}$ is of type (iii) and, in addition $\ell$ cannot cross $F_j\cap F_k$ if the triple is strictly 2-intersecting). Let us now construct a secondary coloring of the triples $j<h<k$ in $[m]$. To this end, we apply Lemma \ref{Lemma:3sets} to the triple $a_ja_k,F_j$ and $F_k$, whose ``hole region" $\Delta_{j,k}=\Delta(a_ja_k,F_j,F_k)$ is adjacent to some boundary vertex $v_{jk}$ of $F_j\cup F_k$. 

Note that $F_h$ intersects exactly one of the arcs $\gamma_j\subset \partial F_j$ and  $\gamma_k\subset \partial F_k$ that are adjacent to $v_{j,k}$ within $\partial \Delta_{j,k}$; see Figure \ref{Fig:Vertical}. Indeed, since both $F_h\cap F_j$ and $F_h\cap F_k$ lie to the right of $\ell$, and the point $a_h$ lies in $F_h\cap \ell$, the set $F_h$ must intersect $\gamma_j\cup \gamma_k$ (or, else, it would never ``exit" $\Delta_{j,k}$ to meet $F_j$ and $F_k$ to the right of $\ell$). However, $F_h$ cannot intersect {\it both} $\gamma_j$ and $\gamma_k$. Indeed, otherwise the region $\Delta(F_j,F_h,F_k)$ would too be adjacent to $v_{j,k}$, and overlap $\Delta_{j,k}$ in the vicinity of $v_{j,k}$. According to Lemma \ref{Lemma:4sets}, the endpoints of the arc $(\partial K_h)\cap (\partial \Delta(F_j,F_h,F_k))$ would then span a segment crossing $a_ja_k$. As a result, one of the regions $F_h\cap F_j$ or $F_h\cap F_k$ would encompass a point to the left of (or on) $\ell$.

\begin{figure}[htb]
 \begin{center}
 	 	\includegraphics[scale=0.4]{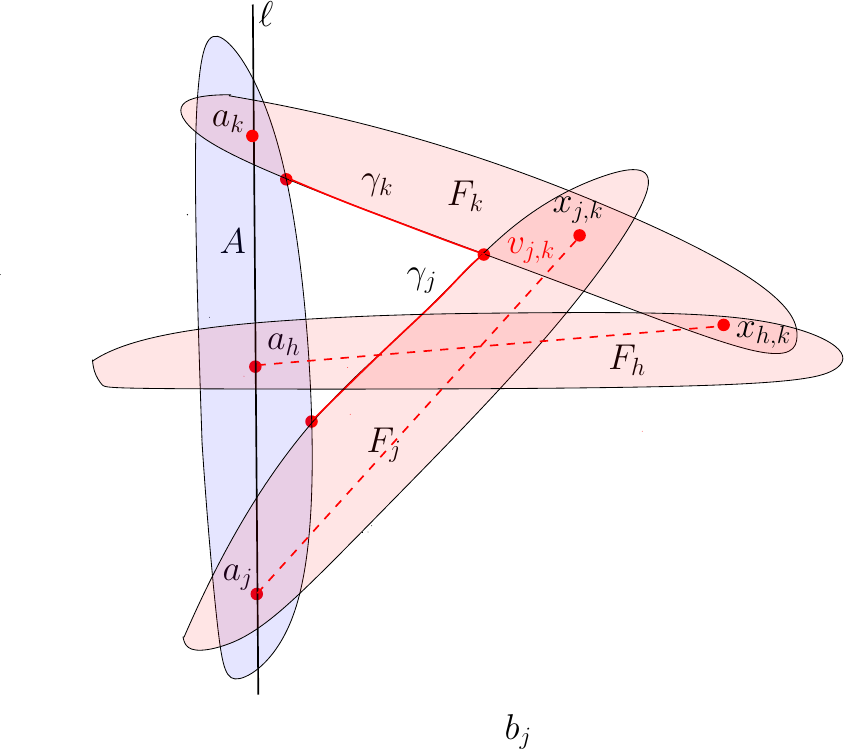}\hspace{1cm}\includegraphics[scale=0.4]{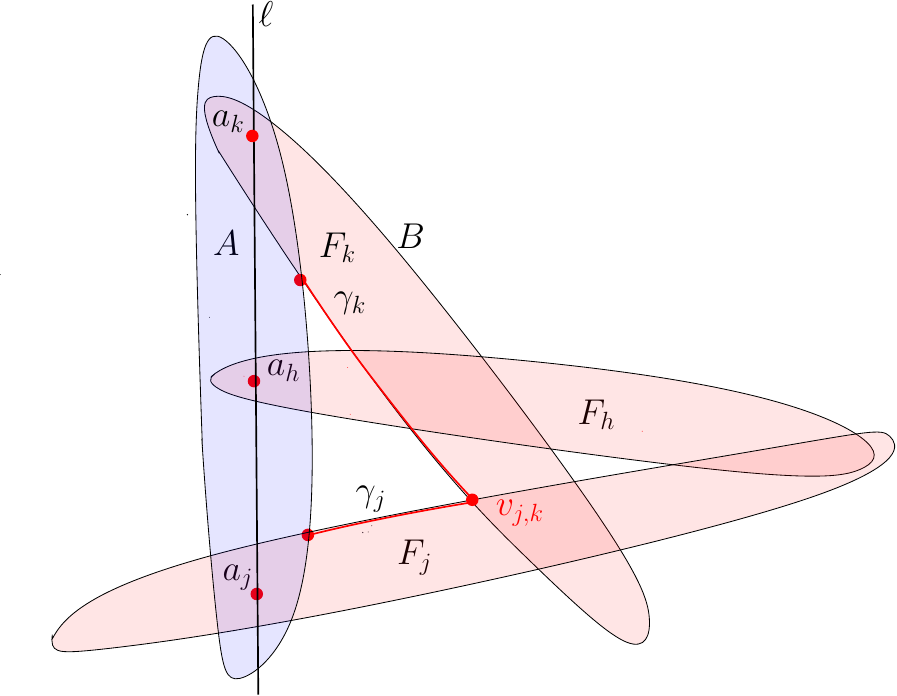}
 	 
 	\caption{\small Proof of Theorem \ref{Theorem:Realize} -- case 2: We color the triple $\{F_j,F_h,F_k\}$ as {\it red} (left) or {\it blue} (right) depending on which of the arcs $\gamma_j,\gamma_k$ is crossed by $F_h$.}
 	\label{Fig:Vertical}
 \end{center}
 \end{figure}

We thus color the triple $\{F_j,F_h,F_k\}$ as {\it red} (resp., {\it blue}) if $F_h$ intersects $\gamma_j$ (resp., $\gamma_k$). 
Since $m\geq R_3(n+1,2)$, another application of Theorem \ref{Theorem:Ramsey} yields an $(n+1)$-size subset $\G\subset \F$ so that all of the triples $\{F_j,F_h,F_k\}\subset \G$ are of the same color, say red. We relabel the elements of $\G$ as $G_1,\ldots,G_{n+1}$ in the ascending vertical order of their representatives $a_i\in G_i\cap \ell$. 
\begin{claim}\label{Claim:CrissCross}
	Let $1\leq j<h<k\leq n+1$. Then for any choice $x_{h,k}\in G_j\cap G_h$ and $x_{j,k}\in G_j\cap G_k$ we have that $a_hx_{h,k}\cap a_jx_{j,k}\neq \emptyset$.
\end{claim}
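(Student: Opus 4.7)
The plan is to show that the four points $a_j, a_h, x_{j,k}, x_{h,k}$ are in convex position with cyclic boundary order $a_j, a_h, x_{j,k}, x_{h,k}$; in such a quadrilateral, $a_j x_{j,k}$ and $a_h x_{h,k}$ are precisely the two diagonals, which must therefore cross.

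The easy part is locating the four points. By the type-(iii) property of each triple $\{A, G_i, G_{i'}\}$, every pairwise intersection among sets of $\G$ is confined to the open halfplane strictly to the right of $\ell$, so both $x_{h,k}$ and $x_{j,k}$ lie strictly to the right of $\ell$; meanwhile $a_j, a_h$ both lie on $\ell$, with $a_j$ strictly below $a_h$. Hence $a_j$ and $a_h$ are consecutive vertices on the boundary of $\conv\{a_j, a_h, x_{h,k}, x_{j,k}\}$, and it only remains to determine the cyclic positions of $x_{j,k}$ and $x_{h,k}$ on the opposite side.

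The crucial step invokes the red coloring. Let $v = v_{j,k}$ denote the inner vertex of the hole $\Delta_{j,k} := \Delta(a_j a_k, G_j, G_k)$ (the common endpoint of the arcs $\gamma_j \subset \partial G_j$ and $\gamma_k \subset \partial G_k$), and let $L_h$ denote the line through $a_h$ and $v$. Since $G_h$ is convex, meets $\ell$ in a segment lying between $a_j$ and $a_k$, and (by the red property) avoids $\gamma_k$ entirely, the entire set $G_h$ must lie in the closed halfplane of $L_h$ containing $\gamma_j$: otherwise, a chord of $G_h$ from $a_h$ to a point on the opposite side of $L_h$ would cross $\gamma_k$, violating the red assumption. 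In particular, $x_{h,k} \in G_h$ lies on the $\gamma_j$-side of $L_h$. Conversely, Lemma \ref{Lemma:OrientSets}(iii) asserts that the open segment from $a_j$ to $v$ is contained in $G_j \setminus G_k$, which places $x_{j,k} \in G_j \cap G_k$ on the $\gamma_k$-side of this segment; a short angle comparison at $v$ then shows that $x_{j,k}$ also lies strictly on the $\gamma_k$-side of $L_h$. These two constraints locate $x_{h,k}$ and $x_{j,k}$ on opposite sides of $L_h$, pinning down the cyclic order $a_j, a_h, x_{j,k}, x_{h,k}$ and completing the proof.

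The main obstacle is the confinement of $G_h$ to a single closed halfplane of $L_h$: the red property is a purely incidence-level statement about the arc $\gamma_k$ (which is in general a convex arc, not a straight chord), and translating it into a clean separation by a straight line requires careful bookkeeping of the relative positions of $a_h$, $v$, and the chord of $\ell$ occupied by $G_h$ inside the hole $\Delta_{j,k}$. Once this confinement is in hand, the rest of the argument is a routine consequence of convexity and Lemma \ref{Lemma:OrientSets}.
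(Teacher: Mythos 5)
Your strategy is genuinely different from the paper's (which follows the directed segment $a_hx_{h,k}\subset G_h$ as it is forced by redness to exit the hole through $\gamma_j$, traverse $G_j$, and --- being unable to touch $\gamma_k$, to pass directly from $G_j$ into $G_k$, or to return to $\ell$ --- leave the triangle $a_ja_kx_{j,k}$ through its edge $a_jx_{j,k}$ before first meeting $G_k$). Unfortunately your route has a genuine gap at its final step. Knowing that $x_{h,k}$ and $x_{j,k}$ lie on opposite sides of the line $L_h$ through $a_h$ and $v$, together with the positions of all four points relative to $\ell$, does \emph{not} pin down the convex position of $a_j,a_h,x_{j,k},x_{h,k}$ in the stated cyclic order. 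It only yields one of the two half-plane conditions needed for the diagonals to cross (namely that $a_j$ and $x_{j,k}$ lie on opposite sides of the line $a_hx_{h,k}$); the other condition --- that $a_h$ and $x_{h,k}$ lie on opposite sides of the line $a_jx_{j,k}$, equivalently that $x_{h,k}$ is not inside the triangle $a_ja_hx_{j,k}$ --- does not follow. For instance, with $a_h$ at the origin, $L_h$ the $x$-axis, $a_j=(0,-5)$, $x_{j,k}=(5,0.1)$ and $x_{h,k}=(0.01,-0.001)$, all of your derived conditions hold yet the two segments are disjoint. Ruling this out is exactly where the content of the proof lies (the chord of $G_h$ must fully cross $G_j$, hence the edge $a_jx_{j,k}\subset G_j$, before it can reach $G_k$), and your argument never engages with it.

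There are two further, secondary problems. First, the confinement claim is too strong as stated: $G_h$ need not lie in a closed half-plane of $L_h$ --- it contains a neighborhood of $a_h$ inside the hole, which straddles $L_h$, and it may also extend to the left of $\ell$. What can be salvaged (by essentially the barrier argument you sketch) is only that the points of $G_h$ lying to the right of $\ell$ and outside the closure of the hole are confined to the closed $\gamma_j$-side; this suffices for $x_{h,k}\in G_h\cap G_k$, but the bookkeeping you defer is where the statement must be weakened. Second, the placement of $x_{j,k}$ on the $\gamma_k$-side of $L_h$ is not justified: Lemma \ref{Lemma:OrientSets}(iii) concerns the segment from the hole vertex $u_j=v_{1j}\in\ell$ to $v$ (not from $a_j$ to $v$), and in any case it only locates points of $G_j\setminus G_k$, whereas $G_j\cap G_k$ is a convex set having $v$ on its boundary and may a priori straddle any line through $v$, including $L_h$. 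Establishing that it does not (if indeed it cannot) would require invoking the existence of the red set $G_h$ itself, which your argument does not do. I recommend abandoning the convex-position framing and arguing directly about which boundary pieces of the triangle $a_ja_kx_{j,k}$ the chord $a_hx_{h,k}$ is permitted to cross.
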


\begin{proof}[Proof of Claim \ref{Claim:CrissCross}.]
Notice that the segment $a_hx_{h,k}$ intersects exactly one of the edges $a_kx_{j,k}\subset G_k,a_jx_{j,k}\subset G_j$ of $\triangle a_kx_{j,k}a_j$. 
However, since the triple $\{F_j,F_h,F_k\}$ is red, the directed segment $\overline{a_hx_{j,k}}$ leaves the region $\Delta(a_ja_k,F_j,F_k)$ (see Lemma \ref{Lemma:OrientSets}) through the arc $\gamma_j$, and then traverses
 $G_j$ and $G_k$ in this order (for it cannot cross $G_j\cap G_k$). The claim follows because $\overline{a_hx_{j,k}}$ cannot enter $G_j$ from within $\Delta(a_ja_k,F_j,F_k)$ (i.e., through $\gamma_j$), so its first intersection with $G_k$ must lie outside $\triangle a_kx_{j,k}a_j$ (i.e., after the intersection $a_hx_{j,h}\cap a_jx_{j,k}$).
\end{proof}

For each $1\leq j\leq n$, we fix a point $b_j\in G_{n+1}\cap G_j$ and use $\ell_j$ to denote the line through $a_jb_j$. Applying Claim \ref{Claim:CrissCross} to all triples $G_j,G_h,G_{n+1}$, for $1\leq j\leq h\leq n$, yields that any two of the segments $a_jb_j\subset K_j\cap \ell_j$ and $a_hb_h\subset K_h\cap \ell_h$ intersect. Hence, the sequence $G_1,\ldots,G_m$ is realized by the line sequence $\ell_1,\ldots,\ell_n$. Lastly, applying Claim \ref{Claim:CrissCross} to each triple $G_j,G_h,G_{k}$, for $1\leq j\leq h\leq k\leq n$, and choosing $x_{j,k}=\ell_j\cap \ell_k$ and $x_{h,k}=\ell_{h}\cap \ell_k$, yields that the respective lines $\ell_j,\ell_h,\ell_k$ indeed form a 3-cap. $\Box$

\section{Proof of Theorem \ref{Theorem:SeparateMany}}\label{Sec:Separate}

At the heart of our argument lies the following property.

\begin{lemma}\label{Lemma:Separate}
There is a positive integer $J\geq 5$ with the following property.
Let $\ell_1,\ell_2,\ldots,\ell_J$ be a monotone sequence of $J$ lines in $\reals^3$, so that $\ell_1\succ \ell_2\succ \ldots\succ \ell_J$.	Then there exist 4 distinct indices $i,j,k,l\in [J]$ so that $\ell_{i}$ and $\ell_j$ are separated by the plane $\pi(\ell_k,\ell_l)$.
\end{lemma}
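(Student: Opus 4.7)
The plan is to recast the separation condition as a linear inequality between affine functions on $\reals^2$ and then exhaust a small number of geometric cases driven by the monotonicity. First I project each line $\ell_i$ to $\ell_i^*\subset\reals^2$ and encode its height by the affine extension $z_i\colon\reals^2\to\reals$, so that $\ell_i$ is the graph of $z_i$ over $\ell_i^*$. The plane $\pi(\ell_k,\ell_l)$ then corresponds to the unique affine function $f_{k,l}\colon\reals^2\to\reals$ with $f_{k,l}|_{\ell_k^*}=z_k|_{\ell_k^*}$ whose gradient agrees with $\nabla z_l$ along the direction of $\ell_l^*$. Monotonicity $\ell_i\succ\ell_j$ reads $(z_i-z_j)(p_{i,j})>0$ at $p_{i,j}=\ell_i^*\cap\ell_j^*$, and $\pi(\ell_k,\ell_l)$ separates $\ell_i,\ell_j$ iff $f_{k,l}(p_{i,j})$ lies strictly between $z_i(p_{i,j})$ and $z_j(p_{i,j})$.

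With $J\geq 5$, I would focus on the extremal plane $\pi(\ell_1,\ell_J)$: it contains $\ell_1$ over $\ell_1^*$ and lies at a constant positive vertical offset $d_{1,J}$ above $\ell_J$ along $\ell_J^*$. For each intermediate index $1<i<J$, monotonicity places $\ell_i$ strictly below the plane at $p_{1,i}$, while at $p_{i,J}$ the line $\ell_i$ sits above the plane exactly when the gap $\delta_{i,J}:=z_i(p_{i,J})-z_J(p_{i,J})$ exceeds $d_{1,J}$. If some intermediate $\ell_i$ satisfies $\delta_{i,J}>d_{1,J}$, then $\ell_i$ crosses $\pi(\ell_1,\ell_J)$ at a definite point of $\ell_i^*$; I then use the cap/cup structure of the 2D projections to locate a second intermediate index $j\neq i$ so that $p_{i,j}$ sits on the ``above-plane'' side of the crossing while $\ell_j$ is still below the plane at $p_{i,j}$, producing separation of $\ell_i,\ell_j$ by $\pi(\ell_1,\ell_J)$ with four distinct indices $\{1,i,j,J\}$. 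Otherwise, every intermediate $\ell_i$ satisfies $\delta_{i,J}<d_{1,J}$; here I would swap to an interior plane $\pi(\ell_k,\ell_l)$ for some $k,l\in\{2,\ldots,J-1\}$, playing the squeezed gaps $\delta_{i,J}$ against the offsets $d_{k,l}$ to force $f_{k,l}(p_{1,J})$ into the interval $\bigl(z_J(p_{1,J}),z_1(p_{1,J})\bigr)$.

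The main obstacle is the complementary ``all-below'' case: I must turn the system of inequalities $\delta_{i,J}<d_{1,J}$ (for all three intermediates) into an explicit interior plane that separates $\ell_1$ and $\ell_J$, and precisely here $J\geq 5$ is used --- two intermediate lines leave too much freedom, while three force the interior planes $\pi(\ell_k,\ell_l)$ (with $k,l\in\{2,\ldots,J-1\}$) to exhaust the height interval above $\ell_J$ along the vertical line through $\ell_1^*\cap\ell_J^*$. Beyond this, the bookkeeping needed to ensure four distinct indices and strict separation (as opposed to tangential contact) is routine, as is verifying both the cap and the cup orientations of the projection.
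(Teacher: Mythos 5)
There is a genuine gap, and it sits exactly where you flag the ``main obstacle.'' Your plan fixes $J=5$ (or treats $J\geq 5$ as if five lines suffice) and reduces everything to two alternatives: either some intermediate line crosses the extremal plane $\pi(\ell_1,\ell_J)$, or all intermediate gaps satisfy $\delta_{i,J}<d_{1,J}$ and an ``interior'' plane $\pi(\ell_k,\ell_l)$ must separate $\ell_1$ from $\ell_J$. Neither branch is actually closed. In the first branch, the existence of a second intermediate index $j$ with $p_{i,j}$ on the above-plane side of $\ell_i$'s crossing \emph{and} with $\ell_j$ below the plane at $p_{i,j}$ is asserted, not proved; nothing in the cap/cup structure alone guarantees both conditions for some $j$ among only three intermediates. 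In the second branch, the claim that three intermediate lines ``force the interior planes to exhaust the height interval'' at $p_{1,J}$ cannot be right as stated: the planes $\pi(\ell_k,\ell_l)$ with $k,l\in\{2,3,4\}$ contribute finitely many heights at $p_{1,J}$, and a finite set of values cannot exhaust an interval; what you need is that at least one of them lands inside $\bigl(z_J(p_{1,J}),z_1(p_{1,J})\bigr)$, and no argument is given for why the inequalities $\delta_{i,J}<d_{1,J}$ would force this. This is precisely the hard combinatorial core of the lemma.

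The paper's proof indicates why five lines are not enough on their own: it takes $J=R_3(5,4)$, classifies each triple $(\ell_i,\ell_j,\ell_k)$ into one of four types according to how the middle line traverses the prism $Z_{i,k}$ sandwiched between $\pi(\ell_i,\ell_k)$ and $\pi(\ell_k,\ell_i)$ (entering through the bottom facet, leaving through the top facet, or crossing neither facet in one of two orders), and then applies Ramsey's theorem to extract five lines all of whose triples have the \emph{same} type. Only after this homogenization does a case analysis (one case per type, each using translation and ordering arguments along the relevant vertical lines) produce a separated pair with four distinct indices, and the separating plane is not always of the two shapes you restrict to --- e.g.\ in the paper's Case~1 the plane $\pi(\ell_4,\ell_2)$ separates $\ell_1$ from $\ell_3$. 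To repair your argument you would need either to prove the ``exhaustion'' claim for arbitrary monotone $5$-tuples (which I do not believe holds), or to introduce a homogenization step of the kind the paper uses, at which point you are essentially reconstructing its proof.
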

\noindent{\bf Proof of Lemma \ref{Lemma:Separate}.} Let $J=R_3(5,4)$, where $R_k(m,c)$ denotes the Ramsey number in Theorem \ref{Theorem:Ramsey}, and fix a monotone sequence $\ell_1\succ\ell_2\succ \ldots\succ \ell_J$ of $J$ lines. Up to rotating the $xy$ coordinate frame, it can be assumed with no loss of generality that none of the lines $\ell_i$ is parallel to the $x$-axis, so they can be oriented in the positive direction of the $x$-axis.
Assume with no loss of generality that the projected sequence $\ell^*_1,\ldots,\ell^*_J$ forms a $J$-cap. For any $1\leq i<j\leq J$ let $V_{i,j}$ denote the vertical prism over the wedge $(\ell^*_i)^-\cap (\ell^*_j)^-$, and $Z_{i,j}$ denote the open region of $V_{i,j}$ that lies inbetween $\pi(\ell_i,\ell_j)$ and $\pi(\ell_j,\ell_i)$ (see Figure \ref{Fig:lines-3d}). Let $\Delta_{i,j}^+$ (resp., $\Delta_{i,j}^-$) denote the top (resp., bottom) facet of $Z_{i,j}$, which is contained in the plane $\pi(\ell_i,\ell_j)$ (resp., $\pi(\ell_j,\ell_i)$).
	

	\begin{figure}[htb]
 \begin{center}
 	  	 	\includegraphics[scale=0.40]{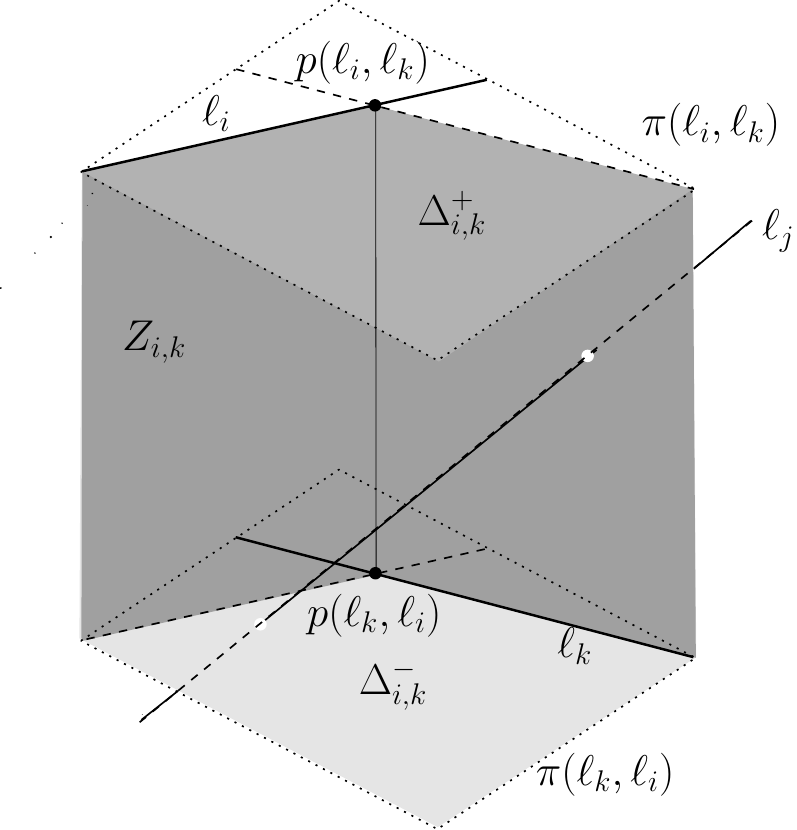}\hspace{2cm}\includegraphics[scale=0.40]{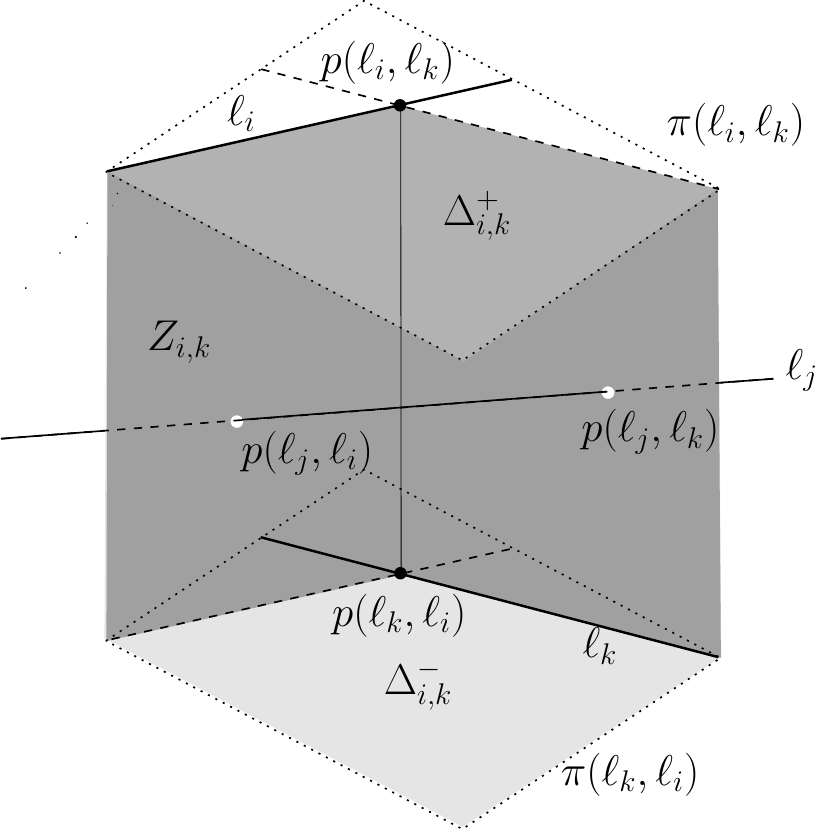} 	 
 	\caption{\small Proposition \ref{Prop:3Lines}. Depicted is the prism $Z_{i,k}$ which lies above $(\ell^*_i)^-\cap (\ell^*_k)^-$, and is ``sandwiched" between the planes $\pi(\ell_i,\ell_k)$ and $\pi(\ell_k,\ell_i)$. Left: In case 1, the line $\ell_j$ enters $Z_{i,k}$ through its ``bottom facet" $\Delta_{i,k}^-$. Right: In cases 3 and 4, the line $\ell_j$ crosses neither of the facets $\Delta_{i,k}^-,\Delta_{i,k}^+$, so that $\ell_j\cap Z_{i,k}=p(\ell_j,\ell_i)p(\ell_j,\ell_k)$ holds.}\label{Fig:lines-3d}
 \end{center}
 \end{figure}

\begin{proposition}\label{Prop:3Lines}
 With the previous assumptions, any triple of lines $(\ell_i,\ell_j,\ell_k)$, with $1\leq i<j<k\leq J$, falls into at least one of the following categories:

\begin{enumerate}
 \item $\ell_j$ enters $Z_{i,k}$ at a point $p(\ell_j,\ell_i)\in \Delta_{i,k}^-$. That is, the point $p(\ell_j,\ell_i)$ lies below $\pi(\ell_k,\ell_i)$.
 \item $\ell_j$ leaves $Z_{i,k}$ at a point $p(\ell_j,\ell_k)\in \Delta_{i,k}^+$. That is, the point $p(\ell_j,\ell_k)$ lies above $\pi(\ell_i,\ell_k)$.
\item $\ell_j$ crosses neither of the facets $\Delta_{i,k}^-$ or $\Delta_{i,k}^+$, so that $\ell_j\cap Z_{i,k}=p(\ell_j,\ell_i)p(\ell_j,\ell_k)$. Furthermore, $\ell_j$ traverses $\pi(\ell_i,\ell_k)$, $Z_{i,k}$ and $\pi(\ell_k,\ell_i)$ in this order.

 \item $\ell_j$ crosses neither of the facets $\Delta_{i,k}^-$ or $\Delta_{i,k}^+$. Furthermore, $\ell_j$ traverses $\pi(\ell_k,\ell_i), Z_{i,k}$ and $\pi(\ell_i,\ell_k)$ in this order.
 \end{enumerate}
 \end{proposition}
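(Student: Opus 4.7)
The plan is to reduce the proposition to an elementary case analysis on the two endpoints of the segment $\ell_j \cap V_{i,k}$, using the projected cap structure to locate this segment in the wedge, and the monotone order $\ell_i \succ \ell_j \succ \ell_k$ to pin down its vertical position.

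First I would understand the picture in projection. The wedge $V_{i,k}^*=(\ell_i^*)^-\cap(\ell_k^*)^-$ has apex $v_{i,k}=\ell_i^*\cap\ell_k^*$, which, by the cap property applied to the triple $i<j<k$, lies strictly above $\ell_j^*$. Since the slope of $\ell_j^*$ is strictly between those of $\ell_i^*$ and $\ell_k^*$, the line $\ell_j^*$ crosses each of the two boundary rays of $V_{i,k}^*$ exactly once, at $v_{i,j}=\ell_i^*\cap\ell_j^*$ and $v_{j,k}=\ell_j^*\cap\ell_k^*$. Lifting back to $\reals^3$, this shows that $\ell_j \cap V_{i,k}$ is precisely the closed segment with endpoints $p(\ell_j,\ell_i)$ (sitting over $v_{i,j}$) and $p(\ell_j,\ell_k)$ (sitting over $v_{j,k}$).

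Next I would use the monotone order to constrain these endpoints vertically. Since $\ell_i\succ \ell_j$, the point $p(\ell_j,\ell_i)$ lies strictly below $p(\ell_i,\ell_j)\in\ell_i\subseteq\pi(\ell_i,\ell_k)$, hence strictly below the top plane of $Z_{i,k}$. Symmetrically, $\ell_j\succ\ell_k$ forces $p(\ell_j,\ell_k)$ to lie strictly above $p(\ell_k,\ell_j)\in\ell_k\subseteq\pi(\ell_k,\ell_i)$, hence strictly above the bottom plane of $Z_{i,k}$. So the only question is where each endpoint lies relative to the opposite parallel plane. This yields a binary split on each side: (a) $p(\ell_j,\ell_i)$ lies below $\pi(\ell_k,\ell_i)$ or is trapped between the two planes; (b) $p(\ell_j,\ell_k)$ lies above $\pi(\ell_i,\ell_k)$ or is trapped between them. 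In alternative (a), $\ell_j$ enters $V_{i,k}$ from below $Z_{i,k}$ and is therefore forced to cross the bottom facet $\Delta_{i,k}^-$, giving case 1. In alternative (b), $\ell_j$ exits $V_{i,k}$ from above $Z_{i,k}$ and must cross $\Delta_{i,k}^+$, giving case 2.

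In the remaining subcase both endpoints lie strictly between the two parallel planes, hence on the two lateral faces of $Z_{i,k}$ (the vertical half-planes above the boundary rays of $V_{i,k}^*$, intersected with the slab between $\pi(\ell_i,\ell_k)$ and $\pi(\ell_k,\ell_i)$). In particular $\ell_j$ meets neither $\Delta_{i,k}^+$ nor $\Delta_{i,k}^-$, and $\ell_j\cap Z_{i,k}=p(\ell_j,\ell_i)p(\ell_j,\ell_k)$. To split this into cases 3 and 4, I would extend $\ell_j$ beyond its entry and exit points: since $\ell_j$ is not parallel to either of the (mutually parallel) planes $\pi(\ell_i,\ell_k)$ and $\pi(\ell_k,\ell_i)$, it meets each at exactly one point outside $V_{i,k}$, and the order of those two crossings along $\ell_j$ is either top-then-bottom (case 3) or bottom-then-top (case 4).

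The routine part of the plan is the projection step and the vertical inequalities driven by the $\succ$-relations. The one place requiring care is verifying that $\ell_j\cap V_{i,k}$ is exactly the segment $p(\ell_j,\ell_i)p(\ell_j,\ell_k)$ and that in the non-case-1/2 subcase this segment lies entirely in the closed slab between the two parallel planes, so that $\ell_j$ does not accidentally exit through the top or bottom facet. This reduces to the observation that the height of $\ell_j$ along the segment varies linearly with its projection parameter, while the two bounding planes are parallel, so if both endpoints lie in the slab then so does the whole segment. Everything else is bookkeeping from the monotone and cap hypotheses.
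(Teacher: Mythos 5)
Your proposal is correct and follows essentially the same route as the paper's proof: both identify $\ell_j\cap V_{i,k}$ as the segment between $p(\ell_j,\ell_i)$ and $p(\ell_j,\ell_k)$ via the cap structure of the projections, use $\ell_i\succ\ell_j\succ\ell_k$ to rule out the segment lying entirely above or entirely below the slab, and then split into cases 1--2 (a facet is crossed) versus 3--4 (the traversal order of the two parallel planes). Your version merely spells out the endpoint-by-endpoint bookkeeping that the paper leaves implicit.
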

 \begin{proof}[Proof of Proposition \ref{Prop:3Lines}.]
Clearly, the line $\ell_{j}$ must intersect $Z_{i,k}$ or, else, it would pass either entirely above or entirely below $Z_{i,k}$. Since both points $\ell_i^*\cap \ell_j^*$ and $\ell_j^*\cap \ell^*_k$ lie on the boundary of $Z^*_{i,k}=(\ell_i^*)^-\cap (\ell_k^*)^-$, the former scenario is contrary to the assumption that $\ell_i\succ\ell_j$, and the latter is contrary to the assumption that $\ell_j\succ \ell_k$. 

If $\ell_j$ crosses any of the facets $\Delta^+_{i,k}$ or $\Delta^-_{i,k}$, then we encounter one of the scenarios 1 or 2. If this is not the case, then we encounter one of the scenarios 3 or 4, depending on whether the directed line $\ell_j$ crosses the plane $\pi(\ell_i,\ell_k)$ from $\pi^+(\ell_i,\ell_k)$ to $\pi^-(\ell_i,\ell_k)$ or vice versa.
 \end{proof}

	By the choice of $J$ in accordance with Theorem \ref{Theorem:Ramsey}, there must exist 5 lines $\ell_{i_1}\succ \ell_{i_2}\succ \ell_{i_3}\succ \ell_{i_4}\succ \ell_{i_5}$ so that every three of them fall into the same scenario of Proposition \ref{Prop:3Lines}. Assume with no loss of generality that $i_1=1,\ldots,i_5=5$ (see Figure \ref{Fig:5lines}).
	
	\begin{figure}[htb]
 \begin{center}
 	 	\includegraphics[scale=0.5]{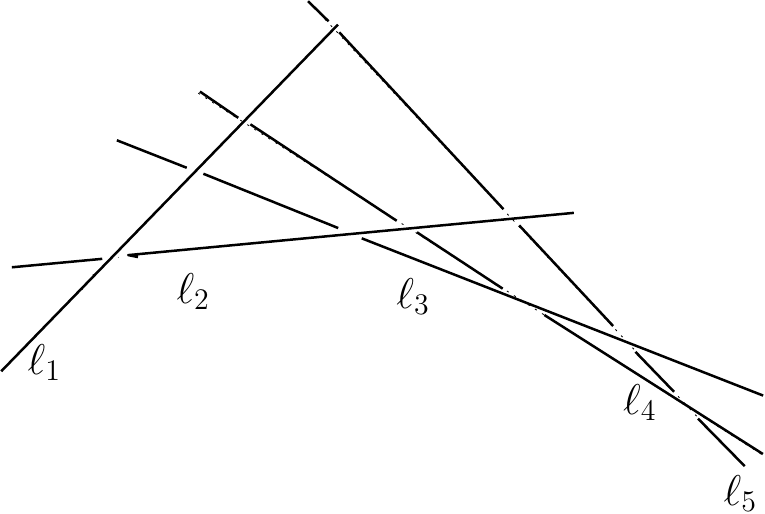} 	 
 	\caption{\small The 5 lines $\ell_{1}\succ \ell_{2}\succ \ell_{3}\succ \ell_{4}\succ \ell_{5}$ -- view from above.}\label{Fig:5lines}
 \end{center}
 \end{figure}

	\medskip
\noindent{\bf Case 1.}
All of the triples $(\ell_{i_1},\ell_{i_2},\ell_{i_3})$, for $1\leq i_1<i_2<i_3\leq 5$, are of type 1. 
In particular, the line $\ell_3$ enters $Z_{2,4}$ through the bottom facet $\Delta^-_{2,4}$.
Notice that (i) the point $\ell_1^*\cap \ell_3^*$ lies in the wedge $W_{2,4}:=(\ell^*_2)^+\cap (\ell^*_4)^-$, (ii) $\ell_1$ passes above the plane $\pi(\ell_4,\ell_2)$ as it traverses the prism $W^\star_{2,4}=\{x\in \reals^3\mid x^*\in W\}$ (for $\ell_1$ passes above both lines $\ell_2$ and $\ell_4$) , (iii) the interval $\ell_3\cap W^\star_{2,4}$ lies below $\pi(\ell_4,\ell_2)$ (because the triple $\ell_2,\ell_3,\ell_4$ is of type 1, so that $\ell_3$ traverses $Z_{2,4}$ after $W^\star_{2,4}$, entering $Z_{2,4}$ from below through its bottom facet $\Delta_{2,4}^-\subset \pi(\ell_4,\ell_2)$).  Hence, the line $\ell_3$ is separated from $\ell_1$ by the plane $\pi(\ell_4,\ell_2)$.

	\medskip
\noindent{\bf Case 2.}
If all of the triples $(\ell_{i_1},\ell_{i_2},\ell_{i_3})$, for $1\leq i_1<i_2<i_3\leq 5$, are of type 2 then, in particular, the line $\ell_3$ leaves $Z_{2,4}$ through its top facet $\Delta_{2,4}^+$. Hence, arguing in a fully symmetrical manner to case 1 yields that $\ell_3$ is separated by $\pi(\ell_2,\ell_4)$ from $\ell_5$. 


	\medskip
\noindent{\bf Case 3.} All of the triples $(\ell_{i_1},\ell_{i_2},\ell_{i_3})$, for $1\leq i_1<i_2<i_3\leq 5$, are of type 3. For any $1\leq i\neq j\leq 5$, let us use $\ell_{i,j}$ to denote the vertical line over the point $\ell^*_i\cap \ell_j^*$. Note that the three planes $\pi(\ell_3,\ell_2),\pi(\ell_3,\ell_4)$, and $\pi(\ell_3,\ell_5)$ through  $\ell_3$ are traversed by $\ell_{2,4}$ and $\ell_{2,5}$ in the same order, as both $\ell^*_{2,4}$ and $\ell^*_{2,5}$ lie above $\ell_3^*$.
     The crucial observation is that each of the lines $\ell\in\{\ell_{2,4},\ell_{2,5}\}$ crosses $\pi(\ell_3,\ell_2)$ (and, therefore, also $\pi(\ell_2,\ell_3)$) {\it above} its intersections $\ell\cap \pi(\ell_3,\ell_4)$ and $\ell \cap \pi(\ell_3,\ell_5)$ with the remaining planes $\pi(\ell_3,\ell_4)$ and $\pi(\ell_3,\ell_5)$.
   
  To see this, let us first show that both $\ell_{2,4}$ and $\ell_{2,5}$ traverse the planes $\pi(\ell_3,\ell_2)$ and $\pi(\ell_3,\ell_4)$ in this downward order. Let
   $a=p(\ell_3,\ell_2)$ and $b=p(\ell_3,\ell_4)$ denote the points at which $\ell_3$, respectively, enters and leaves $Z_{2,4}$ (in accordance with Proposition \ref{Prop:3Lines}), and note that $a^*\in \ell_2^*$ and $b^*\in \ell_4^*$.
   Consider the downward translation of a copy $\ell'_2$ of $\ell_2$ until it hits $\ell_3$ at the point $a$, and a symmetric upward translation of a line $\ell'_4$, which at first coincides with $\ell_4$, until it hits $\ell_3$ at the point $b$. 
Denote $\pi'_{24}:=\pi(\ell'_2,\ell_4)$ and $\pi'_{42}:=\pi(\ell'_4,\ell_2)$.
   
   Since $(\ell_2,\ell_3,\ell_4)$ is of type 3, and $\ell_3$ intersects the planes $\pi(\ell_2,\ell_4)$ and $\pi(\ell_4,\ell_2)$ in this order, the projected intersections $(\pi'_{2,4}\cap \ell_3)^*$ and $(\pi'_{4,2}\cap \ell_3)^*$ are moving within the respective wedges $(\ell_2^*)^+\cap (\ell_4^*)^-$ and $(\ell_2^*)^-\cap (\ell_4^*)^+$.
   Thus, the planes $\pi'_{24}$ and $\pi'_{42}$ are moving towards one another, yet never coincide, within the slab delimited by $\pi(\ell_2,\ell_4)$ and $\pi(\ell_4,\ell_2)$. As a result, in the end of the translation we still have that $\ell'_2\succ \ell'_4$. 
Furthermore, using that (i) $\ell'_2\subset \pi(\ell_3,\ell_2)$ and $\ell'_4\subset \pi(\ell_3,\ell_4)$, and (ii) the line $\ell_2$ (resp., $\ell_4$) lies above (resp., below) $\ell'_2$ (resp., $\ell'_4$), we conclude that $\ell_{2,4}$ must cross $\ell_2$, $\ell'_2\subset \pi(\ell_3,\ell_2)$, $\ell'_4\subset \pi(\ell_3,\ell_4)$, and $\ell_4$ in this downward order. Lastly, since $\ell^*_{2,4}$ and $\ell^*_{2,5}$ lie to the same side (i.e., above) $\ell^*_3$, the line $\ell_{2,5}$ must cross  $\ell_2$, $\pi(\ell_3,\ell_2)$, and $\pi(\ell_3,\ell_4)$ in this downward order.

   
    
  Repeating the previous argument for the triple $(\ell_2,\ell_3,\ell_5)$ (of the same type 3) shows that each of the vertical lines $\ell_{2,5}$ and $\ell_{2,4}$ must cross $\ell_2$, $\pi(\ell_3,\ell_2)$, and $\pi(\ell_3,\ell_5)$ in this downward order, and the point $\ell_{2,5}\cap \ell_5$ lies below $\ell_{2,5}\cap \pi(\ell_3,\ell_5)$. Furthermore, recall that the lines $\ell_{2,4}$ and $\ell_{2,5}$ must traverse $\pi(\ell_3,\ell_4)$ and $\pi(\ell_3,\ell_5)$ in the same (so far unknown) order.
 
 If the point $\ell_{2,4}\cap \pi(\ell_3,\ell_5)$ lies above $\ell_{2,4}\cap \pi(\ell_3,\ell_4)$, then $\ell_{2,4}$ must intersect $\ell_2,\pi(\ell_3,\ell_5),\pi(\ell_3,\ell_4)$, and $\ell_4$ in this downward order; thus, $\ell_2$ and $\ell_4$ are separated by the plane $\pi(\ell_3,\ell_5)$. Otherwise, $\ell_{2,5}\cap \pi(\ell_3,\ell_4)$ must lie above $\ell_{2,5}\cap \pi(\ell_3,\ell_5)$, so that $\ell_{2,5}$ crosses $\ell_2,\pi(\ell_{3},\ell_4),\pi(\ell_3,\ell_5)$, and $\ell_5$ in this downward order; hence, the lines $\ell_2$ and $\ell_5$ are separated by the plane $\pi(\ell_3,\ell_4)$.
  
 \medskip
\noindent{\bf Case 4.}  All of the triples $(\ell_{i_1},\ell_{i_2},\ell_{i_3})$, for $1\leq i_1<i_2<i_3\leq 5$, are of type 4, then $\ell_2$ is separated from $\ell_4$ by $\pi(\ell_3,\ell_5)$. 

To see this, let us first consider the triple $(\ell_3,\ell_4,\ell_5)$. Since $\ell_4$ crosses $\pi(\ell_5,\ell_3)$ before $\pi(\ell_3,\ell_5)$, it follows that the plane $\pi(\ell_3,\ell_5)$ lies below (resp., above) $\pi(\ell_3,\ell_4)$ over $(\ell_3^*)^-$ (resp., $(\ell_3^*)^+$). Since $\ell_4$ lies entirely below $\pi(\ell_3,\ell_4)$, it follows that the point $p(\ell_4,\ell_2)=\ell_4\cap \ell_{2,4}$ (whose projection $\ell^*_{2,4}$ lies above $\ell_3^*$) must lie below the plane $\pi(\ell_3,\ell_5)$. 

It suffices to show that both points $p(\ell_2,\ell_3)=\ell_2\cap \ell_{2,3}$ and $p(\ell_2,\ell_5)=\ell_2\cap \ell_{2,5}$ lie above the plane $\pi(\ell_3,\ell_5)$.
Indeed, since $p(\ell_2,\ell_4)\in p(\ell_2,\ell_3)p(\ell_2,\ell_5)$, it would follow that the point $p(\ell_2,\ell_4)=\ell_{2,4}\cap \ell_2$ too lies above $\pi(\ell_3,\ell_5)$, so the plane $\pi(\ell_3,\ell_5)$ indeed crosses the interval $s(\ell_2,\ell_4)$ between $p(\ell_2,\ell_4)=\ell_{2,4}\cap \ell_2$ and $p(\ell_4,\ell_2)=\ell_{2,4}\cap \ell_4$.

For $p(\ell_2,\ell_3)$, the claim follows immediately as the line $\ell_{2,3}$ meets $\ell_3\subset \pi(\ell_{3},\ell_5)$ below the point $p(\ell_2,\ell_3)=\ell_2\cap \ell_{2,3}$.

To see that $p(\ell_2,\ell_5)$ lies below $\pi(\ell_3,\ell_5)$, recall that the segment $\ell_3\cap Z_{2,5}=p(\ell_3,\ell_2)p(\ell_3,\ell_5)$ lies below $\pi(\ell_2,\ell_5)$ (since $(\ell_2,\ell_3,\ell_5)$ is of type 4). Hence, an upward translation of $\ell_5$ until it hits $\ell_3$ at the point $p(\ell_3,\ell_5)$, yields a line $\ell'_5\subset \pi(\ell_3,\ell_5)$ which meets $\ell_{2,5}$ below the point $p(\ell_2,\ell_5)=\ell_2\cap \ell_{2,5}$ (as both $\ell'_5$ and $p(\ell_3,\ell_2)p(\ell_3,\ell_5)\subset \ell_3$ are confined to the slab between $\pi(\ell_2,\ell_5)$ and $\pi(\ell_5,\ell_2)$). $\Box$

\bigskip
\noindent{\bf Proof of Theorem \ref{Theorem:SeparateMany}.}
It can be assumed with no loss of generality that $n\geq J$ (where $J$ denotes the constant in Lemma \ref{Lemma:Separate}), that $\ell_1\succeq \ell_2\succeq \ldots\succeq \ell_n$, and that $\ell_1,\ldots,\ell_n$ is an $n$-cap.
Furthermore, let us assume with no loss of generality that no pair of lines $\ell_i$ and $\ell_j$ intersect.
\footnote{Otherwise the coplanarities can be resolved by an infinitesimally small perturbation that yields $\ell_1\succ \ell_2\succ \ldots\succ \ell_n$, and following a standard limiting argument described, e.g., in \cite[Theorem 2.2]{Rubin}.}

	We apply  Lemma \ref{Lemma:Separate} to each subsequence $\sigma=(\ell_{h_1},\ell_{h_2},\ldots,\ell_{h_J})$ of $J$ lines, with $1\leq h_1<h_2<\ldots<h_J\leq n$, and label $\sigma$ with the ordered quadruple $(i,j,k,l)$, for distinct $i,j,k,l\in [J]$, if the lines $\ell_{h_i}$ and $\ell_{h_j}$ are separated by $\pi(\ell_{h_k},\ell_{h_l})$.
	By the pigeonhole principle, there must exist such a label $(i,j,k,l)$ that is shared by at least
	
	$$
	\frac{{n\choose J}}{4!{J\choose 4}}=\Omega\left(n^J\right)
	$$
	\noindent $J$-sequences $\sigma=(\ell_{h_1},\ell_{h_2},\ldots,\ell_{h_J})$.
	Assume with no loss of generality that $(i,j,k,l)=(1,2,3,4)$. 
	An additional application of the pigeohole principle yields a $(J-2)$-tuple $\sigma'=(\ell_{h_3},\ell_{h_4},\ldots,\ell_{h_J})$ that is suffix to $\Omega(n^2)$ different $J$-sequences $\sigma=(\ell_{h_1},\ell_{h_2},\ell_{h_3},\ell_{h_4},\ldots,\ell_{h_J})$ which share the same label, giving rise to $\Omega(n^2)$ distinct pairs $(\ell_{h_1},\ell_{h_2})$ that are separated by the same plane $\pi(\ell_{h_3},\ell_{h_4})$. $\Box$

\section{Discussion}\label{Sec:Conclude}

\medskip
\noindent{\bf Separating arbitrary families of lines in $\reals^3$.} Though this is immaterial for the argument in Section \ref{Sec:Main}, one can get rid of the assumption in Theorem \ref{Theorem:SeparateMany} that $\ell_1,\ldots,\ell_n$ form a monotone sequence.

\begin{theorem}\label{Theorem:SeparateGeneral}
	Let $\L=\{\ell_1,\ldots,\ell_n\}$ be a family of $n\geq 3$ lines in $\reals^3$. Then there exists a plane separating $\Theta(n^2)$ of the pairs $\ell_i,\ell_j$, for $1\leq i<j\leq n$.
\end{theorem}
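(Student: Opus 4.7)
\medskip
\noindent\textbf{Proof proposal.}
The plan is to reduce the statement to the monotone setting already handled by Theorem \ref{Theorem:SeparateMany}. First, by a standard perturbation argument (followed by a limiting argument to recover the bound for the original family $\L$), I may assume $\L$ is in generic position: the lines of $\L$ are pairwise skew and mutually non-parallel, their projections $\L^*$ are pairwise non-parallel, and no three members of $\L^*$ are concurrent. Fix the integer constant $J$ furnished by Lemma \ref{Lemma:Separate}, set $J_2 := R_2(J, 2)$, and let $J'$ be an Erd\H{o}s-Szekeres-type number ensuring that every set of $J'$ non-vertical lines in general position in $\reals^2$ contains a $J_2$-sub-sequence forming either a $J_2$-cap or a $J_2$-cup.

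The first step is to argue that every $J'$-subset $\sigma \subseteq \L$ contains a monotone $J$-sub-sequence. By the choice of $J'$, the projected family $\sigma^*$ contains a $J_2$-sub-sequence $\tau^* \subseteq \sigma^*$ forming a $J_2$-cap or $J_2$-cup; let $\tau \subseteq \sigma$ be the corresponding sub-sequence of $\L$. The $\prec$-relation induces a $2$-coloring of the pairs of $\tau$, and by Ramsey's theorem and the choice of $J_2 = R_2(J, 2)$, $\tau$ contains a $J$-sub-sequence $\tau_0$ on which $\prec$ is a total order. Since $\tau_0^* \subseteq \tau^*$ remains a $J$-cap or $J$-cup, $\tau_0$ is a monotone $J$-sub-sequence of $\L$ in the sense of Section \ref{Sec:Separate}.

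Next, I would double-count the incidences between $J'$-subsets of $\L$ and monotone $J$-sub-sequences they contain: every $J'$-subset contributes at least one monotone $J$-sub-sequence, and each monotone $J$-sub-sequence is contained in at most $\binom{n-J}{J'-J}$ of the $J'$-subsets, so $\L$ admits at least
\[
\frac{\binom{n}{J'}}{\binom{n-J}{J'-J}} = \Omega(n^J)
\]
distinct monotone $J$-sub-sequences. Applying Lemma \ref{Lemma:Separate} to each, and then repeating verbatim the two-step pigeonhole used in the proof of Theorem \ref{Theorem:SeparateMany} --- first on the label $(i,j,k,l) \in [J]^4$ specifying the separated pair and separating plane within a sub-sequence, and then on the remaining $J-4$ indices of the sub-sequence --- would yield $\Omega(n^2)$ pairs of $\L$ separated by a single plane.

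The main obstacle I anticipate is the generic-position reduction: ensuring that the $\prec$-relation and Lemma \ref{Lemma:Separate} remain robust under perturbation, and that a plane separating many pairs of a perturbed family also separates the same number of pairs of the original family via a standard limiting argument. Once this is settled, the combinatorial core is essentially iterated Erd\H{o}s-Szekeres and Ramsey applied on top of Lemma \ref{Lemma:Separate} and the pigeonhole from Section \ref{Sec:Separate}, with no need for a positive-fraction refinement of either classical theorem.
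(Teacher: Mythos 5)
Your proposal is correct and follows essentially the same route as the paper: reduce to the monotone case by extracting a cap/cup from the projections (Erd\H{o}s--Szekeres) and then a $\prec$-chain via Ramsey, apply Lemma \ref{Lemma:Separate} to every such subset, and finish with the two-step pigeonhole from the proof of Theorem \ref{Theorem:SeparateMany}. The only cosmetic differences are that the paper pigeonholes directly over all $t$-subsets with $t=R_3(R_2(J,2),2)$ rather than first double-counting monotone $J$-sub-sequences, and that the second pigeonhole should fix the $J-2$ positions other than the separated pair (not just $J-4$), as you in effect acknowledge by deferring to the earlier argument verbatim.
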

\begin{proof}
As before, it can be assumed with no loss of generality that no two lines in $\L$ intersect, no three in the projected family $\L^*=\{\ell_1^*,\ldots,\ell_n^*\}$ pass through the same point, no two of them are parallel, and none of the lines $\ell_i^*$ is parallel to the $y$-axis.

Let $t=R_3(R_2(J,2),2)$, where $R_k(m,c)$ denotes the Ramsey number in Theorem \ref{Theorem:Ramsey}, and $J$ denotes the constant in Lemma \ref{Lemma:Separate}.
The crucial observation is that any $t$-size subset $\M=\{\ell_{i_1},\ldots,\ell_{i_t}\}\in {\L\choose t}$, for $i_1<\ldots<i_t$, must contain a monotone sub-sequence of size $J$. Indeed, the projected sequence $\ell_{i_1},\ldots,\ell_{i_t}$ must contain either a $t$-cap or a $t$-cup $\ell^*_{j_1},\ldots,\ell^*_{j_t}$ of size $t=R_2(J,2)$
 and, moreover, this chain must contain a monotone sub-sequence of size $J$. (The estimate can be improved \cite[Section 3]{JirkaBook}.) 
 
Plugging this into Lemma \ref{Lemma:Separate} implies that every subset $\{\ell_{i_1},\ldots,\ell_{i_t}\}\in {\L\choose t}$ must encompass an ordered quadruple $(\ell_{i_a},\ell_{i_b},\ell_{i_c},\ell_{i_d})$ so that $\ell_{i_a}$ and $\ell_{i_b}$ are separated by $\pi(\ell_{i_c},\ell_{i_d})$. Now the theorem follows by repeating the pigeonhole argument in the proof of Theorem \ref{Theorem:SeparateMany}, albeit with $t$ instead of $J$.
 \end{proof}

\noindent{\bf Open problems.} 
Let us conclude the paper with a list of closely related, and so far open, problems.

\begin{enumerate}
	\item It remains a major open problem to establish the ``2-colored" Conjecture \ref{Conj:2colored} of Mart\'inez, Rold\'an-Pensado, and Rubin \cite{FurtherConsequences}.
	
	The primary obstacle to proving Conjecture \ref{Conj:2colored} via our machinery is that Theorem \ref{Theorem:SeparateGeneral} admits no bi-partite extension. For example, consider two line families $\L_1=\{\ell_i:=\{(i,t,it)\mid t\in \reals\}\}_{i=1}^n$ and $\L_2=\{\ell'_i:=\{(t,i,it)\mid t\in \reals\}\}_{i=1}^n$, each drawn from a different ruling of the same hyperbolic paraboloid $z=yx$. Since (i) any two lines $\ell_i\in \L_1,\ell'_j\in \L_2$ intersect, yet (ii) no four of these intersection points $\ell_i\cap \ell'_j$ are co-planar, a sufficiently small perturbation of $\L_1\cup \L_2$ yields a generic family $\L_1\cup \L_2$ in which no three of the bi-colored pairs $(\ell_i,\ell'_j)\in \L_1\times \L_2$ can be simultaneously separated by the same plane. 
	
	\item It seems to be a rather more tangible goal to show that, under the hypothesis of Theorem \ref{Theorem:Main}, the entire family $\K$ can be crossed by $\Theta(1)$ lines. 
	
	\begin{conjecture}\label{Conj:FixedSize}
		Let $\K$ be a finite-size family of pairwise intersecting convex sets in $\reals^3$. Then $\K$ admits a transversal by $O(1)$ lines.
	\end{conjecture}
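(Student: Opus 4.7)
The plan is to run the Alon-Kleitman \cite{AlonKleitman} $(p,q)$-machinery with Theorem \ref{Theorem:Main} substituting for the Fractional Helly Theorem. The pairwise intersection hypothesis is crucial: it is exactly what should allow us to evade the Cheong-Goaoc-Holmsen obstruction \cite{CheongGoaocHolm} to this pipeline in the general setting.

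First I would upgrade Theorem \ref{Theorem:Main} to its weighted form: for every probability measure $\mu$ on a pairwise intersecting family $\K$ of compact convex sets in $\reals^3$, some line $\ell$ satisfies $\mu(\{K\in\K : \ell \cap K \neq \emptyset\}) \geq c$, where $c$ is the absolute constant of Theorem \ref{Theorem:Main}. This should follow from the standard multi-copy trick: approximate $\mu$ by rationals with common denominator $D$, replace each $K \in \K$ by $D\mu(K)$ infinitesimally perturbed copies, and apply Theorem \ref{Theorem:Main} to the resulting pairwise intersecting family of $D$ sets. Any line hitting an original $K$ hits all its $D\mu(K)$ copies, so a line crossing $cD$ copies corresponds to $\mu$-mass at least $c$ (up to a vanishing perturbation error as $D \to \infty$).

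Second, by LP duality --- restricted to a finite canonical family of candidate lines (e.g.\ lines incident to $O(1)$-size subsets of $\K$, in the spirit of the ``determined'' hyperplanes of Lemma \ref{Lemma:DefinedTangent}) --- the weighted theorem would yield a probability distribution $\nu$ on lines in $\reals^3$ with $\nu(\{\ell : \ell \cap K \neq \emptyset\}) \geq c$ for every $K \in \K$, i.e.\ the fractional line-transversal number of $\K$ is at most $1/c$. An integer transversal of constant size is then extracted by the Haussler-Welzl $\epsilon$-net theorem applied to the range space $(\mathcal{L}, \{R_K\}_{K \in \K})$ with $\mathcal{L}=\{\text{lines in }\reals^3\}$ and $R_K = \{\ell \in \mathcal{L} : \ell \cap K \neq \emptyset\}$: an $\epsilon = c$ net of size $O((1/c)\log(1/c)) = O(1)$ meets every $R_K$ and so crosses every $K \in \K$.

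\textbf{Main obstacles.} Two technical points must be handled carefully. First and most importantly, the weighted Theorem \ref{Theorem:Main}: its proof in Section \ref{Sec:Main} branches on whether $\Omega(n^3)$ triples $\{A,B,C\}\subseteq\K$ satisfy $A^* \cap B^* \cap C^* \neq \emptyset$, and duplicating sets creates many such degenerate triples (three copies of the same $K$ trivially project to a common point). A careful case analysis --- or a generic pre-perturbation that breaks ties while preserving pairwise intersection and the total $\mu$-mass of every range --- seems needed to guarantee that the resulting transversal still captures $\mu$-mass at least $c$; this is the step at which the pairwise-intersecting structure must genuinely survive the reweighting, and it is the most load-bearing. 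Second, the Haussler-Welzl step requires the VC-dimension of $(\mathcal{L}, \{R_K\})$ to be bounded uniformly in $|\K|$; this is standard for convex bodies of bounded algebraic complexity, but the shrinkage trick at the start of Section \ref{Sec:Main} produces polytopes of complexity $O(|\K|)$, which only yields $O(\log|\K|)$ lines out of the box. Closing this gap will likely require either a further shrinkage step tailored to line transversals (producing $O(1)$-complexity surrogate bodies that preserve pairwise intersection), or a direct combinatorial $\epsilon$-net argument that exploits the pairwise-intersecting structure rather than going through VC-dimension.
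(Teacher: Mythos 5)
The statement you set out to prove is Conjecture \ref{Conj:FixedSize}, which the paper does \emph{not} prove: it is stated in Section \ref{Sec:Conclude} as an open problem, together with an explicit explanation of why the route you propose is blocked. Your plan is the Alon--Kleitman pipeline \cite{AlonKleitman}: weighted fractional result, LP duality to bound the fractional line-transversal number, then $\eps$-net rounding. The first two steps are essentially sound --- the weighted form of Theorem \ref{Theorem:Main} follows by applying the unweighted theorem as a black box to the multiset of duplicated sets, which is still pairwise intersecting, so your worry about degenerate triples inside the proof in Section \ref{Sec:Main} is a non-issue --- but the third step is exactly where the paper says the proof does not currently exist. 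Rounding a fractional line transversal to an integer one of size $f(1/c)$ is the weak $\eps$-net problem for line transversals, and its general form was refuted by Cheong, Goaoc and Holmsen \cite{CheongGoaocHolm}, as the paper notes immediately after stating the conjecture.

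Your fallback, Haussler--Welzl, does not rescue this: the range space $\left(\mathcal{L},\{R_K\}\right)$ has unbounded VC-dimension for convex sets in $\reals^3$. Take $n$ vertical lines through $n$ points in convex position in the plane $z=0$; for each subset $S$ of the points, the planar set $\conv(S)$ is crossed by exactly the lines over $S$, so arbitrarily large sets of lines are shattered. No shrinkage to bounded-complexity surrogates is available in general, and the ``direct combinatorial $\eps$-net argument'' you invoke as an alternative is precisely the missing weak $\eps$-net. You correctly sense that the pairwise-intersecting hypothesis must be what evades the obstruction of \cite{CheongGoaocHolm}, but you give no mechanism for how it does so --- and supplying that mechanism is the entire content of the conjecture. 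As written, your proposal is a plan whose load-bearing step is explicitly left open, not a proof.
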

	
	The known general methods \cite{AKMM} for using ``fractional" transversal results (in the mold of Theorem \ref{Theorem:Main}) to yield fixed-size transversals rely on the existence so called {\it weak $\eps$-nets}. 
	Specializing to the problem at hand, this calls for the following hypothetic statement: {\it For any $\eps>0$ there is a finite number $N(\eps)$ with the following property: for any finite family $\L$ of lines in $\reals^3$, the family of all convex sets that are crossed by $\eps|\L|$ elements of $\L$ admits a transversal by only $N(\eps)$ lines (not necessarily from $\L$).}
	
	Since the general existence of such ``line $\eps$-nets" has been disproved by Cheong, Goaoc, and Holmsen \cite{CheongGoaocHolm}, the prospective proof of Conjecture \ref{Conj:FixedSize} calls for a more refined (and probably more ad-hoc) approach.
	
	\item As was pointed out by B\'ar\'any and Kalai \cite{BaranyKalaiSurvey}, the just resolved Conjecture \ref{Conj:Pairwise} comprises {\it ``the first,
and so far most interesting, unsolved case of a series of problems of the same type.
Namely, for what numbers $k,r,d$ is it true that, given a family ${\cal C}$ of convex sets
in $\reals^d$, where every $k$-tuple is intersecting, there is an $r$-flat intersecting a positive
fraction of the sets in ${\cal C}$? Of course, the positive fraction should depend only on $k,r$ and $d$."}

	In this vein, it would be instrumental to gain a better understanding of arrangements of {\it strictly $d$-intersecting familes} of convex sets in dimension $d\geq 3$ (that is, families in which any $d$ sets have a non-empty common intersection, yet no $d+1$ have).
\end{enumerate}

\section*{Acknowledgement}

The author thanks SODA referees for many thoughtful comments which helped to improve the presentation.

\appendix
\section{Proof of Lemma \ref{Lemma:DefinedTangent}}\label{App:FracDefined}
We use induction in the dimension $d$, and distinguish between two scenarios, where the second scenario can arise only for $d\geq 2$. Let $\Lambda(\K,d+1)$ denote the collection of all the $(d+1)$-size families $\K'\in {\K\choose d+1}$ that can be (each) crossed by a hyperplane.

If at least $\alpha{n\choose d+1}/2$ of the member families $\K'\in \Lambda(\K,d+1)\subseteq {\K\choose d+1}$ are good, then we invoke Lemma \ref{Lemma:Tangents} for each subset $\K'$ in order to assign each good family $\K'\in \Lambda(\K,d+1)$ a defining subset $\B_{\K'}$ of only $d$ elements so that at least one of their $2^d$ common tangents crosses the remaining element of $\K'$. Furthermore, the pigeonhole principle yields such a set $\B\in {\K\choose d}$ that is contained in ${n\choose d+1}/\left(2{n\choose d}\right)$ families $\K'\in \Lambda(\K,d+1)$ with $\B_{\K'}=\B$, and a hyperplane $\pi\in \HH(\B)$ crossing ${n\choose d+1}/\left(2\cdot 3^{d}\cdot {n\choose d}\right)=\Omega(n)$ elements of $\K\setminus \B$.

If at least $\alpha{n\choose d+1}/2$ of the $(d+1)$-families $\K'\in \Lambda(\K,d+1)$ are bad, applying Lemma \ref{Lemma:Tangents} to each of them gives rise to a total of at least $\alpha{n\choose d+1}/(2n)=\Omega(\alpha n^d)$ subsets $\G\in {n\choose d}$, each crossed by a $(d-2)$-flat. We project the sets of $\K$ to the $x_{d}$-orthogonal copy of $\reals^{d-1}$ and observe that the resulting family $\K^*$ encompasses at least $\alpha' n^d$ $d$-size subsets, each crossed by a single $(d-2)$-plane within $\reals^{d-1}$; here $0<\alpha'\leq 1$ is a suitable constant that depends only on $\alpha$ and $d$. Hence, the induction hypothesis yields a $(d-2)$-plane $\mu$ within $\reals^{d-1}$ that is determined by a certain subset $\B^*\in {\K^* \choose d-1}$ (with $\B\in {\K\choose d-1}$), and crossing $\beta'_{d-1}(\alpha')n$ elements of $\K^*$. Hence, the ``lifted" hyperplane $\pi=\varphi_{d,d-1}^{-1}(\mu)$ in $\reals^d$ crosses $\Omega(|\K|)$ elements of the original family $\K$, and it is determined by {\it any} $d$-size set $\C\in {\K\choose d}$ that contains $\B$. $\Box$

\end{document}